\newcommand{\e}{\mathrm{e}}
\newcommand{\eee}{\mathrm{e}}
\renewcommand{\k}{\kappa}
\newcommand{\bd}{\partial}
\newcommand{\eps}{\varepsilon}
\newcommand{\abs}[1]{\left| #1 \right|}
\DeclareMathOperator*{\Prob}{Prob}
\DeclareMathOperator*{\Exp}{\mathbb{E}}
\newcommand{\ExpCond}[2]{\Exp\left[#1 \mid #2\right]}
\DeclareMathOperator*{\depth}{depth}
\DeclareMathOperator*{\dist}{dist}
\newcommand{\rootvtx}{r}
\newcommand{\ie}{\emph{i.e.}}
\newcommand{\etal}{\emph{et al.}}
\newcommand{\Poi}{\textrm{Poisson}}
\newcommand{\ddy}{\frac{\mathrm{d}}{\mathrm{d}y}}
\newcommand{\dc}{\mathrm{d}c}
\newcommand{\dlambda}{\mathrm{d}\lambda}
\newtheorem{theorem}{Theorem}[section]
\newtheorem{lemma}[theorem]{Lemma}
\newtheorem{proposition}[theorem]{Proposition}
\theoremstyle{definition}
\newtheorem{definition}[theorem]{Definition}
\newtheorem{remark}[theorem]{Remark}
\begin{document}

\title{Spatial Mixing for Independent Sets in Poisson Random Trees}
\author{Varsha Dani\thanks{University of New Mexico, Dept. of Computer
  Science.  email: {\tt varshadani@gmail.com, hayes@cs.unm.edu}} \and Thomas P. Hayes$^*$ \and
  Cristopher Moore\thanks{Santa Fe Institute.  email: {\tt moore@santafe.edu}}}
\date{}
\maketitle

\thispagestyle{empty}

\begin{abstract}
We consider correlation decay in the hard-core model with fugacity $\lambda$
on a rooted tree $T$ in which the arity of each vertex is independently Poisson 
distributed with mean $d$. Specifically, we investigate the question of which 
parameter settings $(d, \lambda)$ result in strong spatial mixing, weak spatial mixing, 
or neither. (In our context, weak spatial mixing is equivalent to Gibbs uniqueness.)
For finite fugacity, a zero-one law implies that these spatial mixing properties hold either almost surely or almost never, once we have conditioned on whether $T$ is finite or infinite.

We provide a partial answer to this question, which implies in particular that
\begin{enumerate}
\item 
As $d \to \infty$, weak spatial mixing on the Poisson tree occurs whenever 
$\lambda < f(d) - o(1)$ but not when $\lambda$ is slightly above $f(d)$, where 
$f(d)$ is the threshold for WSM (and SSM) on the $d$-regular tree.
This suggests that, in most cases, Poisson trees have similar spatial mixing 
behavior to regular trees.
\item When $1 < d \le 1.179$, there is weak spatial mixing on the $\Poi(d)$ tree for all values of $\lambda$. However, strong spatial mixing does not hold for sufficiently
large $\lambda$.  This is in contrast to regular trees, for which strong spatial mixing and weak spatial mixing always coincide.
\end{enumerate}
For infinite fugacity SSM holds only when the tree is finite, and hence almost surely fails on the $\Poi(d)$ tree when $d>1$. We show that WSM almost surely holds on the $\Poi(d)$ tree for 
$d <  \e^{1/\sqrt{2}}/\sqrt{2} =1.434...$, but that it fails with positive probability if $d>\e$. 

\end{abstract}

\newpage
\setcounter{page}{1}
\section{Introduction}

Spatial mixing, or the decay of correlations between spins
in a spin system, is a fundamental question of interest in
statistical physics.  It is intimately related to temporal mixing
for the corresponding Glauber dynamics Markov chain, 
which means fast convergence to its equilibrium distribution.

There are two flavors of spatial mixing: strong and weak (see
Section~\ref{sec:WSM-SSM-prelims} for definitions.)  For our
purposes, weak spatial mixing is equivalent to Gibbs uniqueness,
another fundamental concept from statistical physics.

The hard core model defines a distribution
over the independent sets of a graph $G$ in terms of a fugacity
$\lambda > 0$.  When $G$ is finite and
$\lambda = 1$, this is the uniform distribution.
More generally, an independent set $S$ has a probability proportional
to $\lambda^{|S|}$, so that when $\lambda > 1$, the distribution
is biased towards larger independent sets, and when $\lambda < 1$,
it is biased towards smaller ones.  By convention, 
when $\lambda = +\infty$, the conditional 
distribution on finite subgraphs 
is uniform over all independent sets of maximum size.

In computer science, the problem of sampling from this distribution
when $\lambda = 1$ is well-known to be poly-time equivalent to
the problem of approximately counting the independent sets of a
graph, which is known to be a hard problem in general. 
We refer the reader to recent work by Sly and Sun~\cite{SlySun} 
for further hardness results.

A seminal paper of D. Weitz~\cite{weitz-tree} found that the
infinite regular $d$-ary tree has the same threshold for
weak and strong spatial mixing, namely 
$\lambda = d^d/(d-1)^{d+1} \sim e/d$.
More importantly, this is a worst case: every other
graph of maximum degree $d+1$ also exhibits WSM and SSM 
for all $\lambda$ up to the aforementioned threshold.
At the time, this established the strongest positive results for
spatial mixing for a wide variety of graphs, including, for instance,
the square grid.

Brightwell, H\"agrstr\"om and Winkler~\cite{brightwell} showed that there are graphs, 
even trees, for which the property of WSM is non-monotone as a function of $\lambda$.
That is, increasing $\lambda$ can actually \emph{decrease} the extent to which correlations 
travel over long distances, and so WSM holds at sufficiently small and sufficiently large 
$\lambda$, but not in between.  They even give a more complicated construction
(not a tree) for which the hard-core model exhibits WSM iff
$\lambda \in (0, \lambda_1] \cup [\lambda_2, \lambda_3]$
where $\lambda_1 < \lambda_2 < \lambda_3 < \infty$.
 
Restrepo \etal~\cite{RSTVY-grid} showed that for some graphs, such as the
planar square lattice, SSM occurs at higher $\lambda$ than for the $4$-regular tree.
Recently, Vera, Vigoda and Yang~\cite{VVY-grid} have shown that the tree of
self-avoiding walks on the square lattice contains a subtree which has
WSM but not SSM, at a still higher value of $\lambda$, but still below
the conjectured critical value for the square lattice.  (See
~\cite[Lemmas 4, 7]{VVY-grid}.)  This suggests that it may not
be such an uncommon phenomenon for WSM to occur without SSM.
In Section~\ref{sec:alternating} we exhibit an example of an infinite tree which 
has WSM for 
all $\lambda >0$ but does not have SSM for any $\lambda > 4$.

We consider random Poisson trees, in which every vertex has an
independent, identically Poisson distributed number of children.
This is a natural model because of its connection to sparse 
Erd\H{o}s-R\'enyi random graphs, $G(n,p)$.  When $d = \Theta(1)$ and $p = d/n$, 
for large $n$, the local structure of balls of volume $o(\sqrt{n})$ is well approximated 
by a Poisson tree. 

It is natural, given an infinite graph, to consider the following threshold
conjecture: There is a threshold $\lambda_{\mathrm{crit}}$ such that 
WSM holds if and only if $\lambda < \lambda_{\mathrm{crit}}$.
The analogous conjecture with SSM in place of WSM is also interesting.
For instance, both conjectures are known to be true with 
\[
\lambda_{\mathrm{crit}} = \frac{\Delta^{\Delta}}{(\Delta -
  1)^{\Delta+1}}
\]
when $G$ is the infinite regular $\Delta$-ary tree.  Note that
$\lambda_{\mathrm{crit}}$ is asymptotically $\eee/\Delta$ as 
$\Delta \to \infty$.
Brightwell, H\"aggstr\"om and Winkler~\cite{brightwell} have 
constructed other graphs $G$ for which the WSM conjecture
is false. 

Understanding weak spatial mixing for regular $d$-ary trees is relatively
straightforward.  Note that, in general, the conditional probability
$a_v$ that node $v$ is unoccupied, 
given that the parent of $v$ is unoccupied,
obeys the recurrence
\begin{center}
\begin{tabular}{m{0.5in} m{5in}}
\includegraphics[width=1.25in]{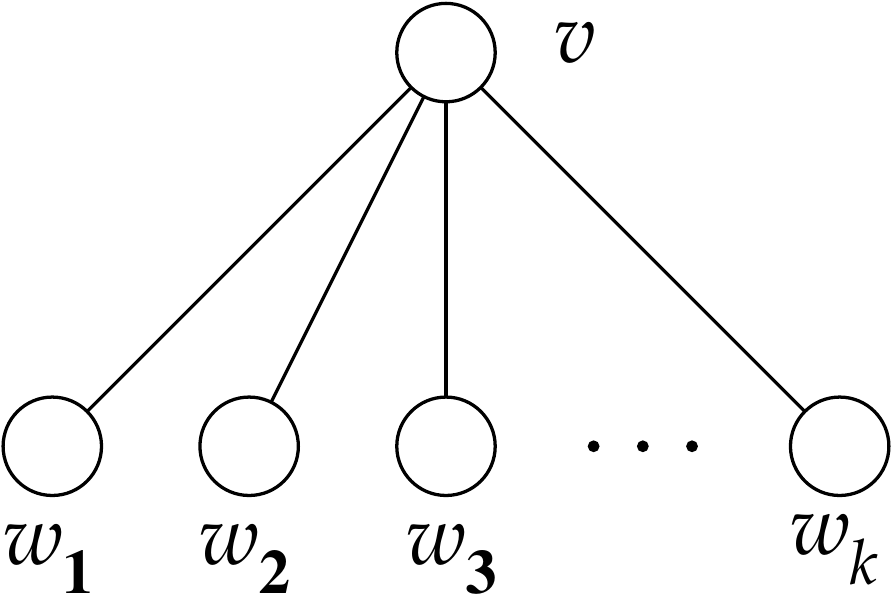}
& \larger[2]
\begin{equation}
\label{eq:bp}
a_v = \frac{1}{1+\lambda \prod_{w} a_w}
\end{equation}
\end{tabular}
\end{center}
where $w$ ranges over the children of $v$.  Since for the $d$-regular
tree all the $a_w$ are
equal, the problem boils down to understanding the stability of the
fixed point of the iterated function $f_d(a) = (1 + \lambda a^d)^{-1}$.

For random Poisson trees, the situation is more complicated.
Since the various subtrees of a node are no longer identical, 
but merely identically \emph{distributed}, we now need to,
in effect, consider a recurrence relation on \emph{distributions} 
rather than on real values.  

Intuitively, we may expect a $\Poi(d)$ tree to behave something like a
regular $d$-ary tree.  We show that this is the case for large $d$,
proving that WSM holds for $\lambda = c/d$ if $c < \eee$ but not if
$c > \eee$.  On the other hand, for small $d$, there are several
ways in which this is not the case.  In particular,
\begin{enumerate}
\item There are some settings of the Poisson parameter and fugacity
for which there is weak mixing (almost surely) but not strong mixing
(with positive probability).  In particular, this happens
when the expected degree is $1.1$ and
the fugacity is sufficiently large.
\item  For sufficiently small $d$, but still greater than $1$, the
Poisson tree exhibits WSM for all values of $\lambda$, even $\lambda = \infty$.
\item One might have thought that the phenomenon exploited in~\cite{brightwell},
where increasing $\lambda$ causes childless nodes to be occupied with
high probability, which then cuts off the flow of information from
their siblings up through their parent, is pathological.  In fact, we
will see that, for small enough $d$, this phenomenon is pervasive
in Poisson trees. 
\item As a consequence, some of our results are non-monotonic, in
that for $1.179 < d < 1.434$, we know WSM occurs at 
$\lambda = \infty$, and for sufficiently small $\lambda$, but
we don't know what happens in between.
\end{enumerate}

Before summarizing our main results, we begin by observing the following 
zero-one law for spatial mixing on Poisson trees with finite fugacity.  

\begin{theorem} \label{thm:zero-one}
For all $d > 1$ and $0 < \lambda < \infty$, conditioned on
$\Poi(d)$ being infinite, 
the probability that the hard-core model on $\Poi(d)$ with fugacity
$\lambda$ has WSM (resp. SSM) is either zero or one.
\end{theorem}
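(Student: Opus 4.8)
The plan is to recognize WSM --- and, with the identical argument, SSM --- as an \emph{inherited} property of the Galton--Watson tree $\Poi(d)$, and then apply the classical zero--one law for such properties. Call a property $A$ of rooted trees inherited if (i) every finite rooted tree has $A$, and (ii) a rooted tree has $A$ if and only if every subtree hanging from a child of its root has $A$. If $A$ is inherited, then $p := \Prob[T \in A]$ satisfies $p = \phi(p)$, where $\phi(s) = \e^{d(s-1)}$ is the offspring generating function: conditioning on the root having degree $k$, the $k$ child-subtrees are i.i.d.\ copies of $T$, each lying in $A$ independently with probability $p$, and by (i)--(ii) this event coincides with $\{T \in A\}$, so $\Prob[T \in A \mid \deg \rootvtx = k] = p^{k}$ and hence $p = \sum_{k} \Prob[\deg \rootvtx = k]\, p^{k} = \phi(p)$. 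For $d > 1$ the map $\phi$ is strictly convex on $[0,1]$ with exactly two fixed points, the extinction probability $q < 1$ and $1$, so $p \in \{q, 1\}$; since $A$ contains all finite trees we also have $\Prob[T \in A,\ T \text{ finite}] = \Prob[T \text{ finite}] = q$. Therefore $\Prob[T \in A \mid T \text{ infinite}] = (p - q)/(1 - q)$ is $0$ if $p = q$ and $1$ if $p = 1$ --- exactly the claimed dichotomy. It remains to check (i) and (ii) for WSM and for SSM, using their formulation for rooted trees from Section~\ref{sec:WSM-SSM-prelims} and the recurrence~\eqref{eq:bp}.

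Property (i) is immediate: a finite rooted tree has no vertices at distance $n$ from its root once $n$ exceeds its depth, so the supremum defining WSM (and the one defining SSM) is vacuous for large $n$ and any rate $\delta(n) \to 0$ is achieved. For (ii), let $\rootvtx$ have children $c_{1}, \dots, c_{K}$ with subtrees $T_{1}, \dots, T_{K}$; by~\eqref{eq:bp}, under any boundary data the unoccupied-marginal at $\rootvtx$ equals $\bigl(1 + \lambda \prod_{i} \hat a_{c_i}\bigr)^{-1}$, where each $\hat a_{c_i} \in [(1+\lambda)^{-1}, 1]$ is the corresponding marginal inside $T_{i}$. This is a Lipschitz function of $(\hat a_{c_i})_{i}$, with constant depending only on $\lambda$ and $K$. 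For the ``if'' direction: boundary data at level $n$ of $T$ restricts to boundary data at level $n-1$ of each $T_{i}$, so if every $T_{i}$ has WSM (the finite ones trivially) the disagreement at $\rootvtx$ is at most $\sum_{i} O(\delta_{T_i}(n-1)) \to 0$; replacing levels by arbitrary pinned sets and measuring distance to the nearest disagreement, together with the standard FKG/monotonicity reduction to comparing the two extremal boundary conditions, gives the ``if'' direction for SSM.

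For the ``only if'' direction of (ii) --- the step that upgrades $p \ge \phi(p)$ to $p = \phi(p)$ --- suppose $T_{i}$ fails WSM: there exist $\eps > 0$, radii $n_{k} \to \infty$, and boundary pairs $\sigma_{k}, \tau_{k}$ at level $n_{k}$ of $T_{i}$ with $\abs{\mu_{c_i}^{\sigma_k} - \mu_{c_i}^{\tau_k}} \ge \eps$. Extend each of $\sigma_{k}, \tau_{k}$ to level $n_{k}+1$ of $T$ by pinning every vertex outside $T_{i}$ to be unoccupied. In the formula above, $A := \prod_{j \ne i} \hat a_{c_j} \ge (1+\lambda)^{-(K-1)} > 0$ is the same for $\sigma_{k}$ and $\tau_{k}$, while $\hat a_{c_i}$ changes by at least $\eps$; hence the marginal at $\rootvtx$ changes by at least $\lambda A \eps / (1+\lambda)^{2}$, a positive constant independent of $k$, so $T$ fails WSM. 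The same construction, now also carrying any additional pinned spins and measuring distances to disagreements, shows $T$ fails SSM whenever some $T_{i}$ does. This establishes (ii) for both properties.

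The crux is this last step: the correlation transmitted from a subtree up to the root must be bounded below by a \emph{strictly positive} constant, and this is exactly where finiteness of $\lambda$ is used --- it confines all the $a$-values of~\eqref{eq:bp}, and the relevant derivatives, to a compact subinterval of $(0,1)$ bounded away from $0$ and $\infty$ (for $\lambda = \infty$ this breaks down, and indeed the conclusion changes, which is why that case is excluded here). A minor point, needed so that the recursion in (ii) is a genuine equivalence of events measurable with respect to the subtrees, is to use the rooted-tree formulation of Section~\ref{sec:WSM-SSM-prelims}: for a tree it suffices to take the pinned set for WSM to be a level, and for SSM to compare the two extremal boundary conditions.
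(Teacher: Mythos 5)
Your proposal is correct and follows essentially the same route as the paper: you identify WSM and SSM (for finite $\lambda$) as hereditary properties of rooted trees that hold on all finite trees, and then apply the Galton--Watson fixed-point argument $p=\e^{d(p-1)}$, whose only solutions are the extinction probability and $1$, to conclude the conditional zero--one law. The only difference is one of packaging: where you verify heredity by hand (Lipschitz transmission of the root recurrence~\eqref{eq:bp} in both directions), the paper obtains the same fact by citing Proposition~\ref{prop:TFAE-WSM} (in particular the equivalence, for finite $\lambda$, of WSM with convergence of the root marginal, which your ``only if'' step implicitly uses) and Remark~\ref{rmk:SSM=WSM-forall-subgraphs}.
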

\noindent Note that for $d \le 1$, the Poisson tree $\Poi(d)$ is almost surely
finite.

In light of this zero-one law (proved in Section~\ref{sec:zero-one})
for finite $\lambda$,  we focus our attention on the question of which parameter settings 
$(d, \lambda)$ result in SSM, WSM, or neither.

We summarize our results for finite fugacities.  See
Figure~\ref{fig:graph} for graphs of some of the functions involved.
Overall, our results describe where WSM and SSM occur or do not occur 
in various regions of the $(d, \lambda)$ plane.

\begin{theorem} \label{thm:summary-finite}
The hard-core model with fugacity $\lambda <
\infty$ on $\Poi(d)$ has the following
properties, almost surely, conditioned on being infinite.
\begin{enumerate}
\item WSM if $d < 1.179...$, for any $0 < \lambda < \infty$. \label{part:1.179}
\item SSM if 
$\lambda < \begin{cases}\frac{4d^2}{(d^2-1)^2} & \mbox{when } d<\sqrt{2+\sqrt5}\\
\frac{3+\sqrt{1+4d^2}}{2d^2-4} & \mbox{otherwise.}
\end{cases}$ \label{part:SSM-1/d}
\item WSM if $\lambda < \frac{e - o(1)}{d}$, as $d \to \infty$, \label{part:WSM}
\item No WSM if $\lambda = \frac{e + o(1)}{d}$, as $d \to \infty$. \label{part:no-WSM}
\end{enumerate}
\end{theorem}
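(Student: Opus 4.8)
The four parts are established separately in the sections that follow; here we describe the strategy for each. The common tool is the recursion~\eqref{eq:bp}, read on the random tree as a recursion on \emph{distributions}: if $X\sim\Poi(d)$ and $A_1,A_2,\dots$ are i.i.d.\ copies of the occupation marginal of an independent child-subtree, then the root marginal satisfies $A \stackrel{d}{=} \bigl(1+\lambda\prod_{i=1}^{X} A_i\bigr)^{-1}$. For the weak-mixing statements we track the random \emph{width} $w_v:=\avz-\avo\ge 0$ of the root marginal as the boundary at depth $n$ ranges over all configurations; by monotonicity in the boundary the extremes are $\mathbf 0$ and $\mathbf 1$, so WSM at $\rootvtx$ is equivalent to $w_{\rootvtx}\to0$ as $n\to\infty$, and SSM to the same statement with an exponential rate. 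For finite $\lambda$ every marginal lies in $[\tfrac1{1+\lambda},1]$, so by telescoping and $\ell_j\le u_j$ the width obeys
\[
w_v=\frac{\lambda\bigl(\prod_i u_i-\prod_i \ell_i\bigr)}{(1+\lambda\prod_i \ell_i)(1+\lambda\prod_i u_i)}\ \le\ \sum_i w_{w_i}\,\kappa_i,\qquad
\kappa_i:=\frac{\lambda\prod_{j\neq i}u_j}{(1+\lambda\prod_j \ell_j)(1+\lambda\prod_j u_j)},
\]
where $[\ell_i,u_i]$ is the marginal interval of child $i$. By the zero--one law (Theorem~\ref{thm:zero-one}) it suffices, for parts~\ref{part:1.179}--\ref{part:WSM}, to bound the relevant quantity \emph{in expectation}---which then upgrades to an almost-sure statement---and for part~\ref{part:no-WSM} to exhibit failure of WSM with \emph{positive probability}.

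For part~\ref{part:WSM} we use that as $d\to\infty$ the $\Poi(d)$ tree is a small perturbation of the $d$-ary tree. On the $d$-ary tree the per-edge contraction factor at the fixed point $a^{*}$ of $f_d$ is $|f_d'(a^{*})|=\lambda d\,(a^{*})^{d+1}$, which at criticality forces $a^{*}=(d-1)/d$ and $\lambda=d^d/(d-1)^{d+1}\sim \e/d$. The plan is to show that the law of the marginals on the Poisson tree concentrates near $a^{*}$ and that $\Exp\bigl[\sum_{i\le X}\kappa_i\bigr]=\lambda d\,(a^{*})^{d+1}(1+o(1))$ (the expectation over the random subtree), with the error---including the contribution of atypically high-degree vertices, which in fact only strengthen the contraction---controlled by Poisson tail bounds; choosing the $o(1)$ in the hypothesis to dominate this error gives $\Exp[\sum_{i\le X}\kappa_i]<1$ and hence $\Exp[w_{\rootvtx}]\to 0$. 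For part~\ref{part:no-WSM} we run the same expansion in reverse: for $\lambda=(\e+o(1))/d$ the leading-order behaviour is that of $f_d$ slightly above the $d$-ary threshold $d^d/(d-1)^{d+1}\sim\e/d$, which has an \emph{unstable} fixed point. The plan is to show that the distributional recursion, restricted to laws concentrated near $(d-1)/d$, retains a fixed point $\mu^{*}$ at which $\Exp[\sum_{i\le X}\kappa_i]>1$; since the $\mathbf 0$- and $\mathbf 1$-orbits---the monotone limits of the root marginal under all-unoccupied and all-occupied boundary conditions at depth $n$---bracket every fixed-point distribution, an expanding fixed point forces these two orbits to have distinct limits, i.e.\ no WSM. (Alternatively, with probability bounded below one can embed an infinite $(1-o(1))d$-ary subtree in $\Poi(d)$ and run the argument there; the obstacle in that route is controlling the ``side subtrees'' hanging off the embedded subtree, whose product contributes only a $1-o(1)$ factor to the effective fugacity.)

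Part~\ref{part:SSM-1/d} is a potential-function argument for SSM. Using $|\partial a_v/\partial a_{w_i}|=a_v(1-a_v)/a_{w_i}$, we pick a potential $\phi$ (measuring distances via $\Phi$ with $\Phi'=\phi$), so that a disagreement at $v$ is built from the disagreements at its children with per-edge factor $\phi(a_v)\,a_v(1-a_v)/(a_{w_i}\phi(a_{w_i}))$. Since the degree is unbounded we cannot demand pointwise contraction at every vertex; instead we require the \emph{branching-weighted} factor $\Exp_{X\sim\Poi(d)}\bigl[\phi(a_v)\,a_v(1-a_v)\sum_{i\le X}1/(a_{w_i}\phi(a_{w_i}))\bigr]\le\rho<1$ over the admissible range $a\in[\tfrac1{1+\lambda},1]$. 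Then the expected root disagreement decays like $\rho^{\,n}$, and a Borel--Cantelli argument over the countably many vertices promotes this to almost-sure SSM with an exponential rate. Optimizing $\phi$ together with the range of messages yields the stated bound on $\lambda$, with the case split at $d=\sqrt{2+\sqrt5}$ marking a change in which constraint is active in the optimization.

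Part~\ref{part:1.179} is the most delicate, since it must hold for \emph{all} $\lambda<\infty$, where crude bounds such as $\kappa_i\le 1/u_i\le 1+\lambda$ blow up as $\lambda\to\infty$. The mechanism is that for small $d$ a positive fraction of vertices are childless, hence occupied with probability $\lambda/(1+\lambda)$, so any vertex with a childless child has $\prod_j u_j$ small and the $\kappa_i$ on the edges into it correspondingly small; this ``blocking'' of correlations---pervasive when $d$ is small---compensates for the growth of the individual factors. The plan is to follow the \emph{joint} law of $(\ell,u)$ (equivalently of $(w,u)$) under the recursion, identify an invariant region, and show that the branching-weighted contraction of the width is at most some $\rho<1$ \emph{uniformly in $\lambda\in(0,\infty]$}; the constant $1.179\ldots$ is the supremum of $d$ for which this succeeds. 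For $\lambda=\infty$ the recursion collapses to a two-type process---each vertex is ``surely occupied'', ``surely unoccupied'', or ``influenced by the boundary''---governed by the planar map $T(z,u)=\bigl(\e^{-d(1-u)},\,1-\e^{-dz}\bigr)$; its fixed point on the line $z+u=1$ is locally attracting exactly when $d<\e$, and establishing global convergence of the orbit from $(z,u)=(0,0)$ is what produces the clean bound $\e^{1/\sqrt2}/\sqrt2$. Carrying out the distributional analysis uniformly in $\lambda$ for finite fugacity is the principal obstacle in the theorem.
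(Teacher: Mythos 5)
The most serious gap is in your plan for part~\ref{part:SSM-1/d}. Your contraction criterion averages over the Poisson degree, $\Exp_{X\sim\Poi(d)}[\cdots]\le\rho<1$, but SSM is equivalent to WSM on \emph{every} subtree of the realized tree (Remark~\ref{rmk:SSM=WSM-forall-subgraphs}): an adversary who freezes nearby vertices effectively deletes vertices, may do so after seeing the tree, and so the surviving degrees are no longer $\Poi(d)$-distributed; a degree-averaged contraction constant is not inherited by subtrees. (The paper itself flags this obstruction in the remark following Theorem~\ref{thm:asymplb-main}, which is why its part~\ref{part:WSM} argument does not give SSM.) The paper's proof of part~\ref{part:SSM-1/d} avoids degrees entirely: it bounds each path's influence deterministically by $(1+\lambda)\prod_i(1-a_i)$, bounds consecutive pairs by $\max_{x\in[\frac{1}{1+\lambda},1]}\frac{\lambda x(1-x)}{1+\lambda x}$ (valid on any subtree), and only invokes the almost-sure boundary bound $|\bd T_R|\le R^2 d^R$, which is monotone under passing to subtrees. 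The stated thresholds and the case split at $d=\sqrt{2+\sqrt5}$ fall out of requiring $d^2$ times this two-level factor to be less than $1$, according to whether the maximizer is interior or at $x=\frac{1}{1+\lambda}$; your ``optimize $\phi$'' step is asserted, not carried out, and as formulated would not yield SSM at all.

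The other parts are closer but still have gaps. For part~\ref{part:no-WSM}, your claim that an expanding distributional fixed point ``forces the two monotone orbits to have distinct limits'' is not justified: $\Exp[\sum_i\kappa_i]>1$ at a fixed point of the recursion on laws does not by itself preclude the extremal orbits from sharing a limit, and making that rigorous is the whole difficulty. The paper sidesteps distributions: taking expectations in \eqref{eq:bp} with $1-z\le\frac{1}{1+z}\le1-z+z^2$ gives $\phi_0(\Exp a_L)\le\Exp a_{L-1}\le\phi_q(\Exp a_L)$ with an adversarial $q\in[0,1]$, and Lemma~\ref{lem:oscillation} shows $\phi_q$ oscillates between two disjoint intervals for every such $q$, so the expected root marginal alternates with the parity of $R$ and WSM fails. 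For part~\ref{part:1.179}, your blocking mechanism (childless children occupied with probability $\frac{\lambda}{1+\lambda}$) is exactly the paper's, but you never derive the constant: the paper bounds the expected per-level factor by $\e^{-\mu}\frac{\lambda}{1+\lambda}+(1-\e^{-\mu})\frac{\lambda}{1+2\lambda}$ with $\mu=d\e^{-d}$, takes $\lambda\to\infty$ to get $\frac{1+\e^{-\mu}}{2}$, and requires this to be below $\frac1d$, which yields $1.179$; your excursion into the $\lambda=\infty$ two-type (Karp--Sipser) map and the constant $\e^{1/\sqrt2}/\sqrt2$ belongs to Theorem~\ref{thm:summary-infinite}, not to this finite-$\lambda$ statement. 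Part~\ref{part:WSM} is the closest in spirit, though the paper implements it deterministically via ``good'' vertices whose depth-$2h$ subtrees are $[(1-\eps)d,(1+\eps)d]$-regular, the pruning/grafting comparison to alternating trees (Lemmas~\ref{lem:prune-graft}, \ref{lem:d1d2contraction}, \ref{lem:fp-lowerbd}), and Chernoff plus union bounds on bad vertices per path, rather than your concentration-of-the-marginal-law sketch, which is left at the level of a plan.
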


Thus, if the WSM threshold conjecture is true for the hard-core model on
the Poisson tree, then we have shown that the location of the
threshold is, for large $d$, asymptotically the same as for
$d$-regular trees.  On the other hand, unlike $d$-regular trees,
there is a range of parameters for which the Poisson tree exhibits 
WSM but not SSM.  Specifically, for $1 < d < 1.179...$ and for sufficiently
large $\lambda$, the Poisson tree almost surely has WSM but not SSM,
conditioned on being infinite; see Remark~\ref{rmk:noSSM}.
We conjecture that the Poisson
tree almost surely exhibits SSM up to a threshold that is asymptotically
$e/d$, the same as for $d$-regular trees. 

We also study spatial mixing properties of the $\Poi(d)$ tree when the fugacity is infinite. The following theorem summarizes our results for this case.

\begin{theorem} \label{thm:summary-infinite}
There exists a constant $d^* > 1$ such that for all $1 < d < d^*$, the hard-core model on 
$\Poi(d)$ with fugacity $\lambda = +\infty$ exhibits WSM but not SSM,
almost surely, conditioned on being infinite.
Futhermore, we prove that the largest such $d^*$ is at least
$e^{1/\sqrt{2}}/\sqrt{2} = 1.434...$, and at most $\eee = 2.718...$. 
\end{theorem}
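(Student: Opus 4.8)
The plan is to analyze the $\lambda=\infty$ hard-core model on $\Poi(d)$ through the recurrence~\eqref{eq:bp} specialized to infinite fugacity, where a childless node is occupied with probability $1$ (so $a_v=0$), and in general $a_v = \mathbf{1}[\prod_w a_w = 0]$ degenerates; more precisely one tracks the probability $p$ that the root is ``free,'' i.e. the marginal probability (over the random tree and the Gibbs measure) that a node is forced to be unoccupied conditioned on its parent unoccupied. Writing $q$ for the probability that a node is \emph{not} forced occupied, a one-step analysis over the $\Poi(d)$ offspring distribution gives a fixed-point equation of the schematic form $q = 1 - e^{-d q'}$ or similar, whose stable fixed points govern WSM: WSM holds iff the relevant ``free mass'' propagated from the leaves decays, equivalently iff the appropriate fixed point is attracting / unique. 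First I would make this recurrence precise, identify the branching-process quantity whose extinction vs.\ survival is equivalent to WSM at $\lambda=\infty$, and show it undergoes a phase transition at some $d^*\in(1,\infty)$; the zero-one law (Theorem~\ref{thm:zero-one} is stated for finite $\lambda$, so I would either extend it or argue survival probability positivity directly) reduces ``almost surely'' vs.\ ``positive probability'' to a single numerical threshold.

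For the \textbf{upper bound} $d^* \le e$: I would exhibit, for $d>e$, a configuration of the tree that with positive probability carries free mass to infinity, breaking WSM. The natural candidate is to look at the subtree of nodes all of whose descendants remain ``flexible''; for $d>e$ the expected branching of an appropriately defined ``flexible'' subtree exceeds $1$ (this is exactly where the $e$ comes from, mirroring part~\ref{part:no-WSM} of Theorem~\ref{thm:summary-finite} where $\lambda d \to e$), so by standard Galton-Watson survival this subtree is infinite with positive probability, along which the boundary-influence does not vanish. I would formalize ``flexible'' so that its offspring mean is exactly $d/e$ (or $d e^{-1}$ up to the right constant), making the threshold transparent.

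For the \textbf{lower bound} $d^* \ge e^{1/\sqrt2}/\sqrt2 = 1.434\ldots$: here I would prove WSM holds a.s.\ (conditioned on infinite) for all $d$ below this value by a contraction / potential-function argument on the distributional recurrence. The idea is to bound the expected amount of free mass that can survive one level of recursion: if each node has $k\sim\Poi(d)$ children, the probability the node transmits freedom is controlled by something like $1-\prod(1-x_i)$ against the $x_i$ transmitted by children, and one shows $\Exp[\text{free mass at a node}] \le g(d)\cdot \Exp[\text{free mass at a child}]$ with $g(d)<1$ precisely when $d < e^{1/\sqrt2}/\sqrt2$. The peculiar constant $e^{1/\sqrt2}/\sqrt2$ strongly suggests optimizing $\max_{t} t\, e^{-d t^2/2}$-type expression or a generating-function bound $\Exp[k\, s^{k-1}] = d e^{-d(1-s)}$ evaluated at the worst $s$, giving a bound $d e^{-1/2}$ at $s$ chosen so that $\sqrt d \cdot (\cdots)$; I would set up the correct one-parameter optimization and verify the crossover occurs at the claimed value. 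I expect \textbf{this lower-bound contraction to be the main obstacle}: one must choose the right norm/potential on distributions (plain expectation of free mass may not contract, so a weighted or $L^p$-type quantity, or a clever monotone coupling between the $\Poi(d)$ recursion and a deterministic scalar recursion, is likely needed), and one must handle the conditioning on the tree being infinite without destroying the supermartingale structure. The remaining steps — turning contraction into a.s.\ WSM via Borel-Cantelli along levels, and packaging ``positive probability failure'' above $e$ together with ``a.s.\ success'' below $1.434$ into the stated $d^*$ — are routine given the two endpoint estimates.
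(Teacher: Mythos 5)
There is a genuine gap: your plan has the right flavor (a recursive distributional analysis with a phase transition), but it is missing the paper's central device and your guesses for where both constants come from do not match any argument you could actually carry out. The paper's analysis at $\lambda=\infty$ rests on the bottom-up Karp--Sipser labeling of vertices as small, large, or unlabeled (small $=$ forced occupied, large $=$ can be unoccupied, unlabeled $=$ not determined by any finite piece), whose densities satisfy \eqref{eq:plarge-psmall}, so that $p_S$ is a fixed point of $f\circ f$ with $f(x)=\e^{-dx}$. The threshold $d=\e$ is where $f\circ f$ acquires fixed points other than $W(d)/d$ (equivalently $W(d)>1$), \emph{not} where a Galton--Watson offspring mean ``$d/\e$'' crosses one; moreover, because a vertex's label is determined by its entire subtree, there is no top-down process of ``flexible'' vertices with independent offspring on which to run a survival argument -- you would be forced back to exactly this distributional fixed-point analysis. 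You also never supply the mechanism by which surviving ``free mass'' destroys WSM: the paper's argument is that with positive probability the root is unlabeled with two unlabeled children and no small children, and then the two extreme boundary conditions at depth $R$ force those children (hence the root) into opposite states depending on the parity of $R$, so $a_\rootvtx$ does not converge. Your reliance on Theorem~\ref{thm:zero-one} is also misplaced, as you note yourself; the paper instead gets almost-sure failure from the almost-sure existence of some such vertex, and almost-sure success as described next.

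For the lower bound the paper does not use a contraction of an expected-free-mass functional at all. For $d\le\e$ one has $p_U=0$, but that alone is not enough: for $1.434\ldots<d\le\e$ every vertex is labeled, yet its value $a_v$ need not be determined by any finite subtree. The key refinement is the notion of $a_v$ being \emph{known} (determined by finitely many descendants), with probabilities $\k_L,\k_S$ satisfying \eqref{eq:klarge}--\eqref{eq:ksmall}; the constant $\e^{1/\sqrt{2}}/\sqrt{2}$ is the bifurcation point at which a second solution with $\k_L<p_L$ branches off $(\k_L,\k_S)=(p_L,p_S)$, namely where $2d^2p_S^2=1$, i.e.\ $W(d)=1/\sqrt{2}$. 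Your proposed optimizations ($\max_t t\,\e^{-dt^2/2}$, or a generating-function bound giving $d\e^{-1/2}$) do not produce this condition, and you give no evidence that any expectation-type potential contracts up to exactly this value; indeed WSM below $1.434$ is proved by the much stronger statement that almost surely \emph{every} vertex's value is fixed by a finite neighborhood, so distant boundaries have no effect whatsoever. Finally, the ``not SSM'' half of the statement is not addressed in your proposal; by Remark~\ref{rmk:SSM=WSM-forall-subgraphs} it follows immediately because an infinite $\Poi(d)$ tree contains arbitrarily long paths, on which maximum independent sets transmit boundary information without decay.
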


\noindent We conjecture that $\eee$ is the correct value for $d^*$.
We prove Theorem~\ref{thm:summary-infinite} 
in Section~\ref{sec:inft-lambda}.

\begin{figure}[!ht]
  \centering
  \begin{subfigure}[b]{\textwidth}
    \centering
    \includegraphics[width=\textwidth]{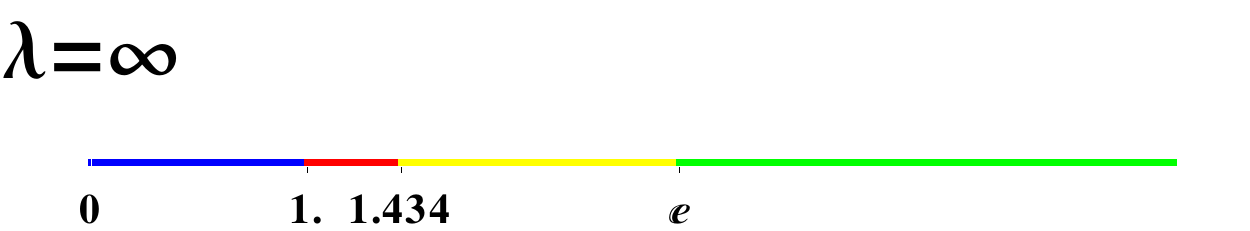}
    \caption{For $\lambda = +\infty$, there is SSM only when $d \le
      1$, in which case the tree is almost surely finite.
      There is WSM for $d < 1.434...$, but not for $d > \eee$.}
   \label{fig:plot-infty}
  \end{subfigure}
  \begin{subfigure}[b]{\textwidth}
    \centering
    \includegraphics[width=\textwidth]{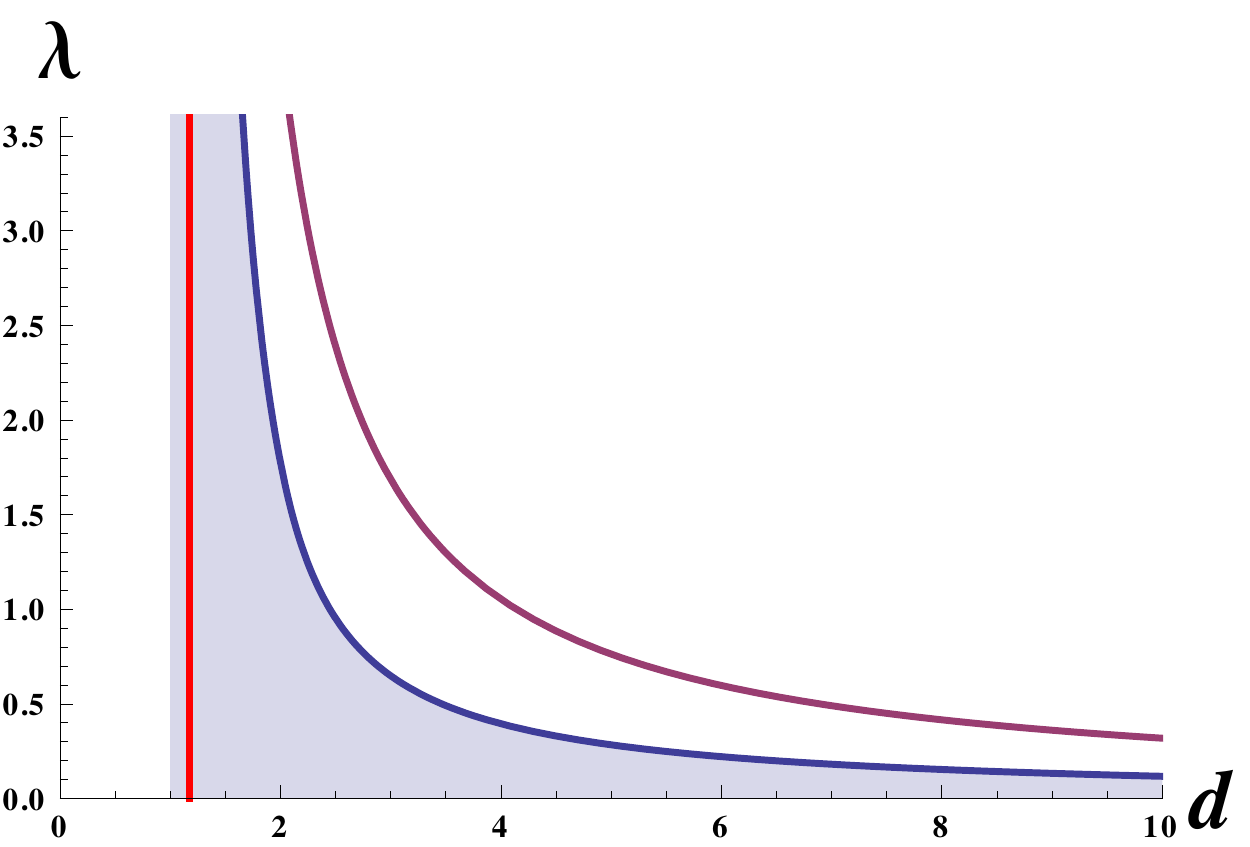}
    \caption{For finite $\lambda$, and $d > 1$ 
      there is SSM in the shaded region (to the left of the red curve
      $d = 1.179...$ and the blue curve that is asymptotic to $1/d$).
      The threshold for WSM is asymptotic to the purple curve, which
      is also the threshold for the regular $d$-ary tree.}
   \label{fig:plot-finite}
  \end{subfigure}
  \caption{Illustration for Theorems~\ref{thm:summary-infinite} and
    \ref{thm:summary-finite}.
  }
  \label{fig:graph}
\end{figure}

\section{Preliminaries}

\subsection{The Poisson Tree}

Let $d > 0$.  Consider a recursively generated random tree $T$, where we
sample a non-negative integer $X$ from the Poisson distribution with
mean $d$, namely, 
\[
(\forall i \ge 0) \quad \Prob(X = i) =\frac{ \eee^{-d}d^i}{i!}
\]
and define $X$ to be the number of children of the root of $T$.
Recursively, let each of these children be the root of a subtree
sampled independently in the same manner.  We call this the
\emph{Poisson tree of average arity $d$}, and denote it by
$\Poi(d)$.

For $d \le 1$, this tree is almost surely finite.  
For $d > 1$, the tree is infinite with positive probability,
but unlike an infinite $d$-regular tree, it has leaves: 
indeed, each non-root node has probability $\eee^{-d}$ to be a leaf.
(The root itself is a leaf with probability $\eee^{-d}(1+d)$, since it is a 
also a leaf if it has only one child.)

\begin{proposition} \label{prop:R2dR}
Let $d > 0$, and let $T$ be a Poisson tree $\Poi(d)$.  For $R \ge 0$, let
$f(R)$ denote the number of nodes in level $R$ of $T$.  Then, almost
surely,
\[
\lim_{R \to \infty} \frac{f(R)}{R^2 d^R} = 0.
\]
\end{proposition}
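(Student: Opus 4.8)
The plan is to treat $(f(R))_{R\ge 0}$ as a Galton--Watson branching process with $\Poi(d)$ offspring distribution and $f(0)=1$. First I would dispose of the easy case: when $d\le 1$ the tree $T$ is almost surely finite, so $f(R)=0$ for all large $R$ almost surely and there is nothing to prove; hence assume $d>1$. Let $\mathcal{F}_R$ be the $\sigma$-algebra generated by the first $R$ levels of $T$. By the recursive definition of $T$, conditioned on $\mathcal{F}_R$ the population $f(R+1)$ is a sum of $f(R)$ independent $\Poi(d)$ variables, so $\Exp[f(R+1)\mid\mathcal{F}_R]=d\,f(R)$. Therefore $W_R:=f(R)/d^R$ is a non-negative martingale with $\Exp[W_R]=1$.

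Next I would show that $W_R$ is bounded in $L^2$. The standard variance recursion for Galton--Watson processes, combined with $\mathrm{Var}(\Poi(d))=d$, gives $\mathrm{Var}(f(R))=d^R(d^R-1)/(d-1)$, and hence $\Exp[W_R^2]=1+\mathrm{Var}(W_R)\le 1+\tfrac{1}{d-1}$, uniformly in $R$. By the $L^2$ martingale convergence theorem (or simply the martingale convergence theorem applied to a non-negative martingale), $W_R$ converges almost surely to a finite limit $W$; in particular $\sup_R W_R<\infty$ almost surely. Since
\[
\frac{f(R)}{R^2 d^R}=\frac{W_R}{R^2}\le \frac{\sup_{R'} W_{R'}}{R^2}\longrightarrow 0
\]
almost surely, the proposition follows.

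The argument has essentially no obstacle; the one point needing care is the conditional-expectation identity $\Exp[f(R+1)\mid\mathcal{F}_R]=d\,f(R)$, which is immediate from the fact that, given the level-$R$ vertices, their children are i.i.d.\ $\Poi(d)$. If one prefers to avoid invoking any form of martingale convergence, the conclusion also follows directly from the second-moment bound: Chebyshev's inequality gives $\Prob(W_R>\eps R^2)\le \Exp[W_R^2]/(\eps^2 R^4)=O(R^{-4})$ for each fixed $\eps>0$, which is summable, so the Borel--Cantelli lemma yields $W_R\le \eps R^2$ for all large $R$ almost surely; letting $\eps\to 0$ along a countable sequence finishes the proof. (Note that either route in fact gives the stronger statement $f(R)=O(d^R)$ almost surely; the extra factor $R^2$ in the proposition is only for convenience downstream.)
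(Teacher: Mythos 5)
Your proof is correct, but it takes a different (and heavier) route than the paper. The paper's argument is a one-line first-moment computation: since $\Exp f(R) = d^R$, Markov's inequality gives $\Prob\bigl( f(R)/(R^2 d^R) > R^{-1/2} \bigr) < R^{-3/2}$, which is summable, so Borel--Cantelli yields that almost surely only finitely many $R$ violate the bound, and the limit is $0$. This uses only the mean, so it works uniformly for all $d>0$ with no case split and no variance formula. Your route instead exploits the Galton--Watson structure: the martingale $W_R = f(R)/d^R$ (plus either nonnegative-martingale convergence or the $L^2$ bound with Chebyshev and Borel--Cantelli) gives $\sup_R W_R < \infty$ almost surely, hence the claim. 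This is perfectly sound --- the variance recursion, the bound $\Exp[W_R^2] \le 1 + \tfrac{1}{d-1}$ for $d>1$, and the separate disposal of $d\le 1$ via almost-sure extinction are all right --- and it buys a genuinely stronger conclusion, namely $f(R) = O(d^R)$ almost surely rather than $o(R^2 d^R)$. The price is the extra machinery (martingale convergence or second moments) and the case analysis at $d\le 1$, none of which the proposition actually requires; the $R^2$ slack is there precisely so that the crude first-moment argument suffices.
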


\begin{proof}
By Markov's inequality, for each $R$, we have
\(
\Prob \left( \frac{f(R)}{R^2 d^R} > R^{-1/2} \right) < R^{-3/2}.
\)
A union bound implies that there are almost surely only finitely many exceptions.
\end{proof}

\subsection{The Hard-Core Model (Independent Sets)}

In Statistical Mechanics, systems involving large numbers of
interacting particles are often modeled by a \emph{spin system}.
This is defined in terms of an underlying graph, often an infinite
lattice,
whose vertices are called \emph{sites}, each of which can be assigned a \emph{spin}
from some finite set $Q$.  A \emph{configuration} is a function
assigning a spin to each site.  A \emph{Gibbs measure} is a 
probability distribution over configurations that satisfies a 
consistency criterion on all finite ``patches'', or subsets of
vertices.   Specifically, for each finite subset $\Lambda \subset V$,
with boundary $\bd \Lambda = \{v \in \Lambda \mid \exists \{v,w\} \in
E_G, w \notin \Lambda \}$, and each 
\emph{boundary condition} $\sigma : \bd \Lambda \to Q$, 
the conditional distribution of the Gibbs measure restricted
to $\Lambda$, conditioned on agreeing with $\sigma$ on 
$\bd \Lambda$, is prescribed. 
Although it is known~\cite{Georgii} that a Gibbs measure
always exists, it is not, in general, guaranteed to be unique.
Indeed, many spin systems undergo a \emph{phase transition},
where some critical threshold for a defining parameter 
determines whether Gibbs uniqueness holds or not.

In the hard-core model, the spins correspond
to a site being ``occupied'' or ``unoccupied''.  Adjacent
sites are not allowed to both be occupied, and so configurations
are independent sets of the graph.  Configurations have probabilities
that are exponential in the number of occupied sites: an independent
set $S$ has probability $\frac1Z \lambda^{|S|}$, where $\lambda > 0$
is a parameter of the system called the \emph{fugacity}, and the
normalizing constant $Z$ is called the \emph{partition function}.

We will also be concerned with the case $\lambda = +\infty$, in which, on finite
patches, the prescribed distribution is considered to be 
uniform over all independent sets of
the maximum possible size.

\subsection{Weak and Strong Spatial Mixing} \label{sec:WSM-SSM-prelims}

``Spatial mixing'' refers to a phenomenon wherein
correlations between spins decay 
as the distance between the vertices increases.

Let $\Lambda$ be any set of vertices, let $\Psi \supset \Lambda$ be
a containing set of vertices
and let $\sigma, \tau : \bd \Psi \to Q$ be two boundary
configurations for the larger set.  
We are interested in the total variation distance
between the marginal distributions on configurations 
over $\Lambda$, conditioned on agreeing with $\sigma$ or $\tau$.
Now, consider infinite families of such triples $(\Psi, \sigma,
\tau)$, indexed by the positive integers.
If 
\[
\dist(\Lambda, \bd \Psi) \to \infty  \mbox{ implies }
\| \mu_\Psi^\sigma - \mu_\Psi^\tau \|_\Lambda  \to 0,
\]
then we say that \emph{weak spatial mixing} (WSM) holds.
If 
\[
\dist(\Lambda, \sigma \oplus \tau) \to \infty \mbox{ implies }
\| \mu_\Psi^\sigma - \mu_\Psi^\tau \|_\Lambda  \to 0,
\]
where $\sigma \oplus \tau$  denotes the set of vertices on which
$\sigma$ and $\tau$ disagree, 
then we say that \emph{strong spatial mixing} (SSM) holds.

Intuitively, weak spatial mixing requires the effect of changing
some spins to decay with distance, assuming all closer vertices
are unconstrained, while strong spatial mixing requires the effect
to decay even when some of the closer vertices are ``frozen'' in
an adversarial way (which must be the same for both boundary
conditions).  Obviously, SSM implies WSM.

The above definition of weak spatial mixing is easily seen to be 
equivalent to Gibbs uniqueness 
(see~\cite[Proposition 2.2]{weitz-gibbs}). 
We note that several alternative definitions of spatial mixing appear
in the literature.  In some of these, the rate of decay of correlation is
required to be exponential in the distance, rather than merely tending
to zero.  All of our results apply in this setting as well.  
In some definitions of spatial mixing, one either restricts attention
to the effect on a single vertex, \ie, $\Lambda = \{v\}$, 
and/or one restricts attention to boundary conditions that disagree on
a single boundary vertex.  In the case when the convergence
rate is required to be exponential, and moreover the graph is such that 
boundary sizes grow subexponentially, the restriction to a single 
disagreement doesn't matter (by a union bound).  For trees, however,
boundary sizes often grow exponentially, in which case the
specific rate of exponential decay of the effect of a vertex would matter.
On the other end, there are spin
systems where restricting $\Lambda$ to be a singleton makes
WSM hold trivally, even when it does not hold for larger sets
$\Lambda$.
\footnote{Here is a rather contrived example.
Start with any 2-spin system for which WSM does not hold.  
Replace each vertex with a pair of vertices, and decree 
that if the original vertex had spin 1, the
pair have the same spin, but uniformly random 1 or 2.  
If the original vertex had spin 2, the pair have opposite spins, again 
uniformly random.  We omit the details.}

\begin{remark} \label{rmk:SSM=WSM-forall-subgraphs}
In the case of  independent sets, it is well known that 
SSM on a graph $G$ is equivalent to WSM on all subgraphs of $G$, 
because any boundary vertices that are frozen to be unoccupied
can equivalently be deleted, and any that are occupied can 
equivalently have all their neighbors deleted.
\end{remark}

For independent sets on a tree, there is a simpler characterization
of spatial mixing in terms of \emph{non-occupation} probabilities.
Specifically, let $T$ be a finite tree with a designated root
vertex $\rootvtx$.  For each vertex $v$, let $a_v$
denote the conditional probability that $v$ is unoccupied, 
conditioned on $v$'s parent (if any) being unoccupied.
These non-occupation probabilities satisfy the recurrence \eqref{eq:bp}.

When $T$ is an infinite rooted tree, we will suppose that
an adversary has set arbitrary values $a_z \in [0,1]$
at level $R+1$.  In this case, we 
treat \eqref{eq:bp} as a recursive definition for $a_v$,
where $v$ is at distance $\le R$ from the root.
If for all sequences of boundary conditions, as $R \to \infty$,
$a_v$ converges to a well-defined limit $a^*_v$,
then we call $a^*_v$ the non-occupation probability of $v$.

Since the righthand side of \eqref{eq:bp} is a decreasing function
of each of the $a_w$, it follows by induction that, for any radius $R$, 
the extreme values of any $a_v$ are induced by the all-zeros
and the all-ones boundaries.  Thus, when proving the existence
of $a^*_v$, it suffices to consider boundary conditions of this type.

\begin{proposition} \label{prop:TFAE-WSM}
For the hard-core model on any infinite tree, the following are equivalent:
\begin{enumerate}
\item For all vertices $v$, there is a well-defined non-occupation
probability $a^*_v$.
\item Weak spatial mixing occurs.
\item There is a unique Gibbs distribution.
\end{enumerate}
Furthermore, when the fugacity, $\lambda$, is finite, 
this condition is equivalent to the three above:
\begin{enumerate}
\item[4.] For the root $\rootvtx$, there is a well-defined
  non-occupation probability $a^*_\rootvtx$.
\end{enumerate}
\end{proposition}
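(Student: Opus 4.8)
\noindent The plan is to take $(2)\Leftrightarrow(3)$ from the standard identification of weak spatial mixing with Gibbs uniqueness (cf.~\cite[Proposition 2.2]{weitz-gibbs}), and to prove $(1)\Leftrightarrow(2)$ directly, along with $(1)\Leftrightarrow(4)$ when $\lambda<\infty$. I would give the argument for finite $\lambda$ (the only regime in which item~4 is asserted) and then indicate the small changes needed for $\lambda=+\infty$.

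For $(1)\Rightarrow(2)$, fix a finite set $\Lambda$ and root $T$ at $\rootvtx$. The first step is the observation that if a containing set $\Psi\supseteq\Lambda$ has $\dist(\Lambda,\bd\Psi)>\max_{u\in\Lambda}\depth(u)$, then necessarily $\rootvtx\in\Psi$ and $\rootvtx\notin\bd\Psi$; otherwise the component of $\Psi$ meeting $\Lambda$ would have a topmost vertex lying in $\bd\Psi$ within distance $\max_{u\in\Lambda}\depth(u)$ of $\Lambda$. (One also reduces to connected $\Psi$: a disconnected $\Psi$ that splits $\Lambda$ forces a boundary cut inside the Steiner tree of $\Lambda$, hence a bounded $\dist(\Lambda,\bd\Psi)$, and components not meeting $\Lambda$ are irrelevant.) Because $T$ is a tree, the $\Lambda$-marginal of $\mu_\Psi^\sigma$ is then an explicit continuous function of a \emph{bounded} collection of non-occupation messages --- the values $a_w$ obtained from the recurrence~\eqref{eq:bp} inside the finite subtree $T_w\cap\Psi$ with $\sigma$ imposed on its constrained leaves, as $w$ ranges over the children of the vertices on the (fixed, finite) path from $\rootvtx$ to the top of the Steiner tree of $\Lambda$, or within that Steiner tree. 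As $\dist(\Lambda,\bd\Psi)\to\infty$ the $\sigma$-constrained leaves of each $T_w\cap\Psi$ recede from $w$, so hypothesis~(1) gives $a_w\to a^*_w$ uniformly in $\sigma$; hence the $\Lambda$-marginal of $\mu_\Psi^\sigma$ tends to a limit depending only on $\Lambda$, which is WSM.

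For $(2)\Rightarrow(1)$, assume WSM and let $\Psi_R$ be the ball of radius $R$ about $\Lambda$. Writing the $\Lambda$-marginal of $\mu_{\Psi_{R'}}^{\sigma}$ as a mixture of the $\Lambda$-marginals of the measures $\mu_{\Psi_R}^{\eta}$ over boundary configurations $\eta$ on $\bd\Psi_R$ (for $R<R'$), WSM makes the latter mutually $o(1)$-close, so $R\mapsto\mu_{\Psi_R}^{\sigma_R}$ restricted to $\Lambda$ is Cauchy with a boundary-independent limit. Applying this with $\Lambda=\{v\}$ when $v=\rootvtx$, and with $\Lambda=\{v,\pi(v)\}$ ($\pi(v)$ the parent of $v$) otherwise: in the latter case $\pi(v)$ is interior, so conditioning on the configuration off $\pi(v)$ gives $\mu_{\Psi_R}^{\sigma_R}(\pi(v)\text{ unoccupied})\ge 1/(1+\lambda)>0$ uniformly in $R$, and hence $a_v=\mu_{\Psi_R}^{\sigma_R}(v\text{ unocc}\mid\pi(v)\text{ unocc})$ converges to a limit that does not depend on the boundary sequence. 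Since every adversarial boundary at level $R$ of $T_v$ extends to some $\sigma_R$ on $\bd\Psi_R$, this is precisely the statement that $a^*_v$ exists.

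Finally, $(1)\Rightarrow(4)$ is trivial, and for $(4)\Rightarrow(1)$ I would induct on $\depth(v)$. Suppose $a^*_p$ exists for a vertex $p$ at depth $k$ and let $c$ range over its children. By the monotonicity of~\eqref{eq:bp} recorded before the proposition, the least and greatest values of $a_p$ over boundaries at level $R$ are $a_p^{-,R}=(1+\lambda\prod_c a_c^{+,R-1})^{-1}$ and $a_p^{+,R}=(1+\lambda\prod_c a_c^{-,R-1})^{-1}$, where $a_c^{-,R-1}\le a_c^{+,R-1}$ are the corresponding extremes for $a_c$; these are monotone in $R$, so $\lim_R a_p^{-,R}=\lim_R a_p^{+,R}$ forces $\prod_c\overline a_c=\prod_c\underline a_c$, where $\underline a_c=\lim_R a_c^{-,R-1}\le\overline a_c=\lim_R a_c^{+,R-1}$. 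For $\lambda<\infty$ every message obeys $a_c\ge 1/(1+\lambda)>0$, so all factors are positive and $\underline a_c=\overline a_c$ for every child, that is, $a^*_c$ exists; starting from $\rootvtx$ the induction gives~(1). The case $\lambda=+\infty$ of $(1)\Leftrightarrow(2)\Leftrightarrow(3)$ goes the same way with~\eqref{eq:bp} replaced by its infinite-fugacity form (Section~\ref{sec:inft-lambda}), the only delicate point being to adapt the one place where $a_c\ge 1/(1+\lambda)$ was used. I expect the main obstacle to be the bookkeeping in $(1)\Rightarrow(2)$ --- notably the claim that a far boundary forces $\rootvtx\in\Psi$, which is what collapses the ``downward'' influence on the top of the Steiner tree of $\Lambda$ into finitely many upward messages --- together with the positivity of the messages in $(4)\Rightarrow(1)$, which is exactly where finiteness of $\lambda$ is essential.
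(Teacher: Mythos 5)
Your proposal is correct in substance, and at the same level of rigor as the paper's own proof sketch, but it pivots through a different member of the equivalence. Both you and the paper import $(2)\Leftrightarrow(3)$ from \cite{weitz-gibbs}; the paper then proves $(1)\Leftrightarrow(3)$ directly, whereas you prove $(1)\Leftrightarrow(2)$ directly. Concretely, the paper's $(3)\Rightarrow(1)$ argues that Gibbs uniqueness yields a well-defined joint marginal on $v$ and its parent, so $a^*_v$ is just the conditional non-occupation probability (the Markov property confining all outside influence to the parent's spin), and its $(1)\Rightarrow(3)$ uses a minimal-disagreement trick: two distinct Gibbs measures must first differ, in a breadth-first order, at the conditional law of some $v_i$ given its unoccupied parent, which both measures are forced to set equal to $a^*_{v_i}$ --- a contradiction. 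Your route instead works with finite-volume marginals: $(1)\Rightarrow(2)$ by expressing the $\Lambda$-marginal as a continuous function of finitely many subtree messages that converge (uniformly, via the all-zeros/all-ones extremes) to the $a^*_w$, and $(2)\Rightarrow(1)$ by a DLR/mixture Cauchy argument followed by conditioning on the parent being unoccupied, which has probability at least $1/(1+\lambda)$ for finite $\lambda$. The paper's route buys brevity --- it avoids the Steiner-tree and ragged-boundary bookkeeping you rightly anticipate --- while your route stays closer to the WSM definition itself and makes explicit exactly where monotonicity and positivity of the messages enter; your $(4)\Rightarrow(1)$, via monotone extremal sequences and positivity of the finitely many factors $a_c \ge 1/(1+\lambda)$, is a fleshed-out version of the paper's one-sentence argument and correctly isolates why finiteness of $\lambda$ is needed there. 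One shared caveat: at $\lambda=\infty$ the step that conditions on the parent being unoccupied (or, in the paper, reads $a^*_v$ off the Gibbs marginal) needs adaptation since that event can have probability zero; you flag this, and the paper's sketch glosses it in essentially the same way. Both arguments also implicitly assume the tree is locally finite, so that the product over children in \eqref{eq:bp} is a finite product.
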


\begin{proof}[Proof sketch]
The equivalence of statements 2 and 3 is shown in~\cite[Proposition 2.2]{weitz-gibbs}. 

To see that statement 3 implies statement 1, let $v$ be any vertex in
the tree.  By Gibbs uniqueness, if we consider larger and larger balls
centered at $v$, the effect of the boundary configuration goes to
zero, and there is a well-defined marginal distribution on the spins
of $v$ and its parent.  Essentially by definition, $a^*_v$ must equal
the probability that $v$ is unoccupied, conditioned on its
parent being unoccupied.  Note that the effect of all spins outside
the subtree under $v$ can only influence the spin of $v$ through
the spin of its parent, which we have conditioned on.

To see that statement 1 implies statement 3, suppose for contradiction
that there were two distinct Gibbs measures.  Then their marginals
must differ on some finite patch $\Lambda \subset V$.  Starting with
the root vertex $v_0$, let $v_0, v_1, v_2, \dots, $ be a breadth-first
traversal of the tree.  Then, for some configuration $\sigma$, and
some finite $i$, the probability of $\sigma$ restricted to $v_0,
\dots, v_i$, must differ under the two Gibbs measures.  Choose
$i$ to be minimal with respect to this property.  In this case,
$\Prob( \sigma(v_i) \; \mid \; \sigma(v_0), \dots, \sigma(v_{i-1}) )$
differs under the two distributions.  The only way this can happen
is if the parent of $v_i$ is unoccupied under $\sigma$, in which
case the above conditional probability must equal $a^*_{v_i}$ in both
measures, a contradiction.

Statement 4 is a special case of statement 1, corresponding to weak spatial mixing at the root (since the root has no parent).  Hence statement 1 implies statement 4.
Statement 4 implies statement 1 when $\lambda$ is finite, because the recurrence 
\eqref{eq:bp} holds at every vertex $v$, under every boundary condition.  It follows that
the limit $a^*_v$ cannot exist unless the limits $a^*_w$ exist for every child vertex $w$.
\end{proof}

As before, note that for infinite $\lambda$, recurrence~\eqref{eq:bp} doesn't hold. Indeed,
it is possible for $a^*_v$ to be completely determined by a finite collection of its descendants.
For instance, if two children of $v$ are themselves childless, then $a^*_v = 1$ regardless
of any other consideration.  Thus statement 4 is weaker than
statements 1 through 3 when $\lambda = \infty$.

\subsection{Zero-One Law} \label{sec:zero-one}

In this section, we prove
Theorem~\ref{thm:zero-one}.  To this end, we say that
a boolean predicate, $S$, defined on rooted trees has property $\mathcal{R}$
if, for every tree $T$, $S(T)$ holds if and only if
$S(T')$ holds for every induced proper subtree $T'$ of $T$.
Note that any predicate with property $\mathcal{R}$ must hold
for every finite tree, by induction. 

\smallskip\par\noindent{\bf Examples:}  
\begin{enumerate}
\item ``$T$ is finite'' has property $\mathcal{R}$.
\item When $\lambda < \infty$, the property ``The hard-core model
for $T$ has WSM,'' has property $\mathcal{R}$, in light of
Proposition~\ref{prop:TFAE-WSM}.
\item Similarly, for $\lambda < \infty$, ``The hard-core model for $T$ has
SSM''  also has property $\mathcal{R}$.
\end{enumerate}

\begin{lemma} \label{lem:property-R-zero-one}
Let $A$ be a predicate with property $\mathcal{R}$.
Then, for a random $\Poi(d)$ tree, conditioned on being infinite, 
the conditional probability that $A$ holds is either zero or one.
\end{lemma}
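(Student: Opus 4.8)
The plan is to exploit the recursive self-similarity of the Poisson tree together with the defining property $\mathcal{R}$, using a Kolmogorov-style zero-one argument. Write $T = \Poi(d)$, let $X \sim \Poi(d)$ be the number of children of the root, and let $T_1, \dots, T_X$ be the subtrees rooted at those children; these are i.i.d.\ copies of $\Poi(d)$, independent of $X$. Since $A$ has property $\mathcal{R}$, for \emph{any} tree we have $A(T) \iff A(T_i)$ for every $i$ (the $T_i$ are the maximal proper induced subtrees, and property $\mathcal{R}$ propagates down to and up from all proper subtrees by induction). Let $p$ denote the conditional probability $\Prob(A(T) \mid T \text{ infinite})$. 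The key point is that $T$ is infinite if and only if at least one $T_i$ is infinite, and on that event $A(T)$ holds iff $A(T_i)$ holds for the (at least one) infinite $T_i$ — and in fact iff it holds for \emph{all} infinite $T_i$, since they all agree with $A(T)$.

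The main step is to set up the self-consistency equation for $p$. Condition on $X = k$ and on exactly $j \ge 1$ of the $k$ subtrees being infinite (each subtree is infinite independently with the same probability $q = \Prob(\Poi(d)\text{ infinite}) > 0$ for $d>1$). Given this, $A(T)$ holds iff $A$ holds on all $j$ of these infinite subtrees; but the events $\{A(T_i) \mid T_i \text{ infinite}\}$ for the $j$ infinite subtrees are i.i.d.\ with probability $p$ each. However, property $\mathcal{R}$ forces these $j$ indicators to be \emph{equal} almost surely — they must all equal $A(T)$ — so the only way $j$ i.i.d.\ Bernoulli$(p)$ variables can be almost surely equal is if $p \in \{0,1\}$. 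To make this rigorous without circularity, I would instead argue: let $B$ be the event that $A$ holds on a fixed infinite Poisson tree. Build the tree level by level; $B$ is measurable with respect to the whole tree, but by property $\mathcal{R}$, $B$ coincides (on the event of being infinite) with the event ``$A$ holds on $T_i$'' for whichever child subtree $T_i$ we like that happens to be infinite. Iterating, $B$ is determined by the tree below level $n$ for every $n$, i.e.\ $B$ lies in the tail $\sigma$-field $\bigcap_n \mathcal{F}_{\ge n}$, where $\mathcal{F}_{\ge n}$ is generated by the subtrees hanging at level $n$. By the Kolmogorov zero-one law for this tail field (the subtree structures at disjoint levels are independent once we track the tree as a branching process), $\Prob(B) \in \{0,1\}$, and conditioning on the event $\{T \text{ infinite}\}$, which is itself a tail event of probability $q$, yields $p \in \{0,1\}$.

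The technical obstacle is making the ``$B$ is in the tail $\sigma$-field'' claim precise, because the descendant subtrees at level $n$ are not literally independent of the first $n$ levels in the naive product sense — the \emph{number} of level-$n$ vertices depends on the earlier structure. The clean way around this is to observe that property $\mathcal{R}$ gives, for every $n$, that on $\{T\text{ infinite}\}$ the indicator $\mathbf{1}[A(T)]$ equals $\mathbf{1}[A(T_v)]$ for \emph{every} vertex $v$ at level $n$ lying on an infinite path, hence equals $\sup_{v \in L_n, T_v \text{ infinite}} \mathbf{1}[A(T_v)]$ (all terms in the sup being equal); this last quantity is a function of the unlabeled forest of subtrees below level $n$ together with which of them are infinite. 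One then applies the zero-one law in the form: if an event $E$ satisfies, for each $n$, $\Prob(E \,\triangle\, E_n) = 0$ for some $E_n$ measurable with respect to the generation-$n$-and-below sigma-field, and the generation-$(n)$-and-below sigma-fields are ``asymptotically trivial'' for a supercritical branching process conditioned on survival (a standard fact, provable directly here because $\Prob(T_v \text{ infinite and } A(T_v)) = q p$ is the same for each $v$ and the $T_v$ are independent given $L_n$), then $\Prob(E) \in \{0,1\}$. I would carry out this last independence-and-averaging computation explicitly: condition on the first $n$ levels, note the conditional law of $\mathbf{1}[A(T)]$ depends on the first $n$ levels only through whether any surviving branch exists, and since $\Prob(A(T) \mid \text{first } n \text{ levels}, T \text{ infinite}) = p$ identically, a martingale-convergence argument ($\mathbf{1}[A(T)]$ is its own conditional expectation given deeper and deeper information, and also given shallower information it is constant $= p$) forces $p = p^2$ via the $L^2$ bound $\Exp[\mathbf{1}[A(T)]^2] = \Exp[\mathbf{1}[A(T)]]$ — i.e.\ $p = \Prob(A(T) \mid T\text{ inf})$ satisfies $p = p^2$, so $p \in \{0,1\}$. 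This final identity $p = p^2$ is really the heart of the matter, and everything else is bookkeeping to justify the two ways of conditioning.
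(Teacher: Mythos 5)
Your overall instinct (self-similarity plus a zero-one mechanism) is sound, but two of the steps you lean on are not justified and one is essentially circular. First, property $\mathcal{R}$ gives a \emph{conjunction}: $A(T)$ holds iff $A(T_v)$ holds for \emph{every} child subtree, hence $\mathbf{1}[A(T)]=\min_{v\in L_n}\mathbf{1}[A(T_v)]$ for every level $n$ (equivalently the minimum over the infinite $T_v$, since finite trees always satisfy $A$). Your repeated claim that $A(T)$ coincides with $A(T_i)$ for a \emph{single}, arbitrarily chosen infinite child, and that the indicators of the infinite children ``all agree with $A(T)$,'' does not follow from $\mathcal{R}$: if $A(T)$ fails, the infinite children may split. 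That agreement only becomes true almost surely \emph{after} the zero-one law is established, so invoking it is circular (for mere tail-measurability the minimum would do, so this part is repairable). Second, and more seriously, the pivotal identity ``$\Prob(A(T)\mid \text{first } n \text{ levels},\, T \text{ infinite})=p$ identically'' is false. Writing $p_0=\Prob(A(T))$, $p^*$ for the extinction probability and $Z_n=|L_n|$, independence of the level-$n$ subtrees given $\mathcal{F}_n$ gives $\Prob(A(T)\mid\mathcal{F}_n)=p_0^{Z_n}$, and after conditioning on survival the value is $(p_0^{Z_n}-(p^*)^{Z_n})/(1-(p^*)^{Z_n})$, which genuinely depends on $Z_n$ unless $p_0\in\{p^*,1\}$ --- exactly the statement to be proved. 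Consequently the step ``forces $p=p^2$'' has no support (the identity $\Exp[\mathbf{1}[A(T)]^2]=\Exp[\mathbf{1}[A(T)]]$ is vacuous for an indicator), and, as you yourself note, Kolmogorov's zero-one law does not apply to these level $\sigma$-fields, so the burden falls entirely on this broken step.

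For comparison, the paper's proof sidesteps all of this by working with the \emph{unconditional} probability: by property $\mathcal{R}$ and the independence of the child subtrees given their $\Poi(d)$ number, $p_0=\Exp[p_0^X]=e^{d(p_0-1)}$, whose only solutions in $[0,1]$ are $1$ and $p^*$; since $A$ holds on every finite tree, $p_0=p^*+(1-p^*)q$ with $q$ the conditional probability given non-extinction, forcing $q\in\{0,1\}$. If you want to keep your martingale flavor, the outline can be repaired via L\'evy's zero-one law: $\Prob(A(T)\mid\mathcal{F}_n)=p_0^{Z_n}\to\mathbf{1}[A(T)]$ a.s., and on survival $Z_n\to\infty$, so either $p_0=1$ (whence $q=1$) or $p_0<1$ and the limit is $0$ on survival (whence $q=0$). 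As written, however, the proposal does not constitute a proof.
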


\begin{proof}
Let $p$ denote the probability that $A(T)$ holds.  
Since $A(T)$ holds iff $A(T')$ holds for each of the
top-level subtrees $T'$ of $T$, and the number of such
subtrees is Poisson distributed with mean $d$, we have
\[
p = \sum_{i \ge 0} e^{-d} \frac{d^i}{i!} p^i = e^{d(p-1)}.
\]
This equation is easily seen to have the following solutions.
$p = 1$ is always a solution.
When $d \le 1$, this is the only solution in $[0,1]$.
When $d > 1$, there is a second solution $p^* < 1$, 
which equals the probability that $\Poi(d)$ is finite.
Since predicates with property $\mathcal{R}$ hold for
all finite trees, it follows that $p = p^* + (1-p^*) q$, where $q$ is the
conditional probability of $A(T)$ conditioned on $T$
being infinite.  Hence $q$ is $0$ or $1$, completing
the proof.
\end{proof}

Theorem~\ref{thm:zero-one} follows as an immediate corollary
in light of the above observation that having WSM (resp. SSM)
is a predicate with property $\mathcal{R}$.

\subsection{Alternating Trees}\label{sec:alternating}

Consider the infinite rooted tree $T$ with alternating layers of degree $d >12$ 
and degree 2 vertices, \ie, the root has $d$ children, each of whom have 
two children, each of whom have $d$ children and so on.
In this section, we examine the question of weak spatial mixing for
such trees.  Notice that,
since $T$ contains a complete binary tree, it does not have SSM for
$\lambda > 4$.

\begin{theorem}
$T$ has weak spatial mixing for all $\lambda \le \frac{d}{4 \ln d}$.
\end{theorem}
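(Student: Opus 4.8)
The plan is to verify the fourth criterion of Proposition~\ref{prop:TFAE-WSM}: since $\lambda<\infty$, weak spatial mixing is equivalent to the existence of a well-defined non-occupation probability $a^*_\rootvtx$ at the root. Let $L_R$ denote the set of vertices at level $R$, fix a large $R$, and write $a_\rootvtx=F_R(\vec a)$ for the root value obtained by iterating \eqref{eq:bp} from boundary values $\vec a\in[0,1]^{L_R}$. As observed in Section~\ref{sec:WSM-SSM-prelims}, over all $\vec a$ the value $a_\rootvtx$ ranges over an interval with endpoints $F_R(\mathbf0)$ and $F_R(\mathbf1)$, and these intervals are nested in $R$ (iterating \eqref{eq:bp} over the extra levels turns any level-$R'$ boundary into some level-$R$ boundary). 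Hence $a^*_\rootvtx$ exists once $\abs{F_R(\mathbf0)-F_R(\mathbf1)}\to0$, and by the mean value inequality it suffices to prove
\[
\sup_{\vec a\in[0,1]^{L_R}}\ \sum_{z\in L_R}\abs{\frac{\partial a_\rootvtx}{\partial a_z}}\ \xrightarrow[R\to\infty]{}\ 0 ;
\]
the bound we will get is in fact geometric in $R$, so this also yields exponential-rate WSM.

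I would first record an a priori estimate. Differentiating \eqref{eq:bp} gives $\abs{\partial a_v/\partial a_w}=a_v(1-a_v)/a_w$ and $1-a_v\le\lambda\prod_w a_w$, and every value produced from a boundary in $[0,1]$ lies in $[\tfrac1{1+\lambda},1]$; in particular every internal degree-$d$ vertex has $a_v\ge\tfrac1{1+\lambda}$. Bootstrapping on the alternating structure: every degree-$2$ vertex with internal children then has $a_u\le\beta_1:=(1+\lambda(1+\lambda)^{-2})^{-1}<1$; hence every degree-$d$ vertex outside the bottommost few levels has $a_v\ge(1+\lambda\beta_1^{\,d})^{-1}$, which is already extremely close to $1$; hence every degree-$2$ vertex outside the bottommost few levels has $a_u\le\beta$ for some $\beta$ close to $\tfrac1{1+\lambda}$ (a bounded number of bootstrap rounds suffices, as only a fixed amount of slack above $\tfrac1{1+\lambda}$ is needed). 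The point is that $\lambda\beta^d$ is then extremely small, and that every degree-$d$ vertex outside the bottommost $O(1)$ levels satisfies $1-a_v\le\lambda\prod_i a_{u_i}\le\lambda\beta^d$.

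Next, fix $z\in L_R$ with root-to-$z$ path $\rootvtx=p_0,p_1,\dots,p_R=z$. The chain rule telescopes:
\[
\abs{\frac{\partial a_\rootvtx}{\partial a_z}}=\prod_{k=0}^{R-1}\frac{a_{p_k}(1-a_{p_k})}{a_{p_{k+1}}}=\frac{a_\rootvtx}{a_z}\prod_{k=0}^{R-1}(1-a_{p_k}) .
\]
The only non-uniformly-bounded factor is $1/a_z$, and it cancels against the bottom term: writing $w$ for the children of $p_{R-1}$, one of which is $z$, we have $1-a_{p_{R-1}}\le\lambda\prod_w a_w\le\lambda a_z$, so $\abs{\partial a_\rootvtx/\partial a_z}\le\lambda\prod_{k=0}^{R-2}(1-a_{p_k})$. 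In this remaining product the $\approx R/2$ degree-$d$ vertices on the path each contribute at most $\lambda\beta^d$, except for the $O(1)$ of them near the boundary, for which we use the crude bound $1-a_v\le\lambda$, while the degree-$2$ vertices contribute at most $1$; hence $\abs{\partial a_\rootvtx/\partial a_z}\le C(\lambda,d)\,(\lambda\beta^d)^{R/2}$ with $C(\lambda,d)$ independent of $R$. Since consecutive pairs of levels multiply the vertex count by $d$ and then by $2$, level $R=2m$ has $(2d)^m$ vertices, whence
\[
\sup_{\vec a}\ \sum_{z\in L_R}\abs{\frac{\partial a_\rootvtx}{\partial a_z}}\le C(\lambda,d)\bigl(2d\,\lambda\beta^d\bigr)^m ,
\]
which tends to $0$ provided $2d\,\lambda\beta^d<1$.

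It remains to check that $d>12$ and $\lambda\le d/(4\ln d)$ force $2d\,\lambda\beta^d<1$; since $\beta$ equals $\tfrac1{1+\lambda}$ up to an exponentially small correction, $2d\,\lambda\beta^d$ behaves like $2d\,\lambda(1+\lambda)^{-d}$, which lies far below $1$ throughout the stated range, so this is a routine calculus estimate. I expect the only genuinely delicate point to be the bootstrapped a priori bound: the first-pass estimate $a_u\le\beta_1$ by itself is not strong enough to bring the exponent $d$ down to the stated parameter range, so one really needs the extra round --- which is exactly the mechanism of \cite{brightwell}, namely that a high-arity vertex is occupied only with probability $O((1+\lambda)^{-d})$, which pushes its children back down toward $\tfrac1{1+\lambda}$. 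Once that is in hand, the telescoping identity and the level-counting are just bookkeeping.
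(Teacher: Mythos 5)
Your argument is correct in outline, but it follows a genuinely different route from the paper's. The paper exploits the exact level-symmetry of the alternating tree: under the all-zeros or all-ones boundary, every vertex of a given level carries the same value, so the problem collapses to the one-dimensional two-level map $g(x)=\bigl(1+\lambda(1+\lambda x^2)^{-d}\bigr)^{-1}$, and the proof is simply that $|g'(x)|\le 2\lambda^2 d\,(1+\lambda(1+\lambda)^{-2})^{-d-1}<1$ on $[\tfrac1{1+\lambda},1]$, i.e.\ a contraction-mapping argument (unique fixed point, monotone sandwich between the two extreme boundaries). You instead run the total-influence argument of Sections~\ref{sec:finlam-SSM-lb} and~\ref{sec:asymp-lb-abbr}: the telescoped path derivative $\abs{\partial a_\rootvtx/\partial a_z}\le\lambda\prod(1-a_{p_k})$, a bootstrapped a priori bound making each degree-$d$ vertex contribute a factor $\lambda\beta^d$ with $\beta$ close to $\tfrac1{1+\lambda}$, and the level count $(2d)^{R/2}$, reducing WSM to $2d\lambda\beta^d<1$. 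What the paper's route buys is brevity and the clean reduction to a scalar dynamical system; what yours buys is robustness (no use of the tree's regularity, an explicit exponential decay rate, and it makes the Brightwell--H\"aggstr\"om--Winkler mechanism explicit, which the contraction bound uses only implicitly), and your bootstrap iteration is in fact just the monotone iteration of $g$ from $\tfrac1{1+\lambda}$, so the two proofs are controlling the same fixed point. One caution on the step you defer as ``routine calculus'': the binding case is not the endpoint $\lambda=d/(4\ln d)$ (where $\lambda\beta^d$ is astronomically small) but moderate $\lambda\approx 1/(d-1)$, where $2d\lambda(1+\lambda)^{-d}$ peaks at roughly $\tfrac{2}{e}\cdot\tfrac{d}{d-1}\approx 0.8$; moreover $\beta$ strictly exceeds $\tfrac1{1+\lambda}$, so you must also verify that the bootstrap does not stall below $1-\Theta(1/d)$ (equivalently, that $g(x)>x$ up to that point), which does hold throughout the stated range but with a modest rather than ``far below $1$'' margin --- the paper itself is informal at exactly the analogous point (``since $d$ is large''). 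With that check written out, your proof is complete.
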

\begin{proof}
Consider the function 
\[
g(x) = \frac{1}{1+\lambda\left(1+\lambda x^2\right)^{-d}}
\]
which determines the values $a_w$ of the nodes $w$ at an even depth 
$r$ when the values of $a_w$ for $w$ at depth $r+2$ have been set to $x$.
Since $f$ is the composition of two monotone decreasing functions, it is 
monotone increasing. 
Observe that 
\[
g'(x) = -g(x)^2 \cdot \lambda (-d) \left(1+\lambda x^2\right)^{-d-1} \cdot 
2 \lambda x  = 2 x g(x)^2 \lambda^2 d\left(1+\lambda x^2\right)^{-d-1}.
\]
At the boundary, the adversary can set the $a_w$s to any values in $[0,1]$. 
However, recall that by \eqref{eq:bp}, for every level above that, these 
values will lie in $[\frac{1}{1+\lambda}, 1]$. Thus $x$ and $g(x)$ 
are both
between  $\frac{1}{1+\lambda}$ and 1, and we have
\begin{align*}
|g'(x)| &\le 2 \lambda^2 d \left(1+ \frac{\lambda}{(1+\lambda)^2}\right)^{-d-1}  \\
& \le 2 \lambda^2 d \e^{-\lambda(d+1)/(1+\lambda)^2}
\end{align*}
Since $d$ is large, when $\lambda < \frac{d}{4 \ln d}$, $|g'(x)|$ is bounded 
below 1 for all $x$ in $[\frac{1}{1+\lambda}, 1]$. It follows that 
$g$ is a 
contraction mapping with a unique fixed point $a^*$ in 
$[\frac{1}{1+\lambda}, 1]$, and moreover, for any 
$x \in [\frac{1}{1+\lambda}, 1]$, the sequence $\{a_n\}$ defined recursively by 
\[
a_0 = x; \,\, a_n = g(a_{n-1})
\]
converges to $a^*$.

Now suppose that the adversary sets the values of all the nodes at depth $R$ 
to be either all 0s or all 1s. Then applying \eqref{eq:bp} results in the same 
values at all the nodes at depth $R'$ which is the deepest even level above 
$R$. applying the function $g$ repeatedly from then on, we see that as $R$ 
goes to infinity, the value at the root, $a_\rootvtx$ converges to 
$a^*$. By the monotonicity of \eqref{eq:bp} with respect to each $a_w$, 
$a_\rootvtx$ converges to $a_*$ for all settings of the nodes at depth 
$R$ by the adversary. It follows that $T$ has weak spatial mixing for all 
$\lambda < \frac{d}{4 \ln d}$.
\end{proof}

On the other hand, $T$ contains the $2$-regular tree as a subtree. Thus $T$ 
does not have strong spatial mixing for any $\lambda > 4$.  Thus there is a 
large range of $\lambda$ for which it has weak, but not strong, spatial mixing.

Now consider the infinite tree $T'$ all of whose vertices at depth $r$ have 
$d(r)$ children, where
\[
d(r)= \begin{cases} 2 & \mbox{ if $r$ is odd}\\
                    2^{r+1} & \mbox{ if $r$ is even} \end{cases}
\]
(or any increasing function of $r$ on the even levels should be fine.) 
As before $T'$ contains the complete infinite binary tree as a subtree, and 
so has no strong spatial mixing above $\lambda = 4$. However, it is easily seen that 
$T'$ has weak spatial mixing for all $\lambda$.

\section{Infinite Fugacity: Maximum Independent Sets}\label{sec:inft-lambda}

In this section we derive upper and lower bounds on the Weak Spatial Mixing threshold in the infinite fugacity case. We note that at infinite fugacity,  the Poisson tree with average degree $d$ does not exhibit 
\emph{Strong} Spatial Mixing unless $d<1$ in which case the tree is almost certainly finite.

When $\lambda = \infty$, equation \eqref{eq:bp} is 
potentially indeterminate, so a good first step would be to re-examine the definition of
the model.  The defining notion is that, for any finite patch with
boundary condition, the distribution should be uniform over
independent sets of the maximum possible size.  However, 
in order to understand whether this condition leads to a unique Gibbs measure, 
we still want a recurrence for the probabilities $a_v$, that $v$ is
unoccupied, conditioned on its parent being unoccupied.

There are a couple of
good ways to deal with the indeterminism in \eqref{eq:bp}.
First, we can do arithmetic in the ring $\mathbb{R}[\lambda^{-1}]/(\lambda^{-2})$,
where we treat $\lambda^{-1}$ as an infinitesimal, that can be
ignored when added to any non-zero real number, and whose square is
treated as zero.  The expression
$1/(1+\lambda \prod_w a_w )$ evaluates to:
\begin{enumerate} 
\item $1$ whenever two or more of the $a_w$ are infinitesimal,
\item $\lambda^{-1} \prod_w  a_w^{-1}$ if none of the $a_w$ are
  infinitesimal, and
\item $1/(1 + c_{w'} \prod_{w: w \ne w'} a_w)$ if exactly one vertex $w'$
  has the infinitesimal value $a_{w'} = c_{w'} \lambda^{-1}$.
\end{enumerate}
The second approach is to treat the above infinitesimals as zeros,
but to reconstruct the coefficient $c_{w'}$ in case 3, from the
values on the children of $w'$.  This gives the formula
\[
a_v = \frac{\prod_z a_z}{\prod_z a_z + \prod_{w \ne w'} a_w},  \label{eq:bpi}
\]
where $z$ ranges over the children of the unique child $w'$ with
$a_{w'} = 0$.

We will refer to vertex $v$ as ``large'' when $a_v$ evaluates to a
non-zero real number, and as ``small'' when it evaluates to an
infinitesimal (or zero, if you prefer that viewpoint).  There is a 
third possibility, namely that no finite piece of the tree suffices
to determine whether $a_v$ is large or small, because of infinite
descent; in this case, we say $a_v$ is ``unlabeled.''  Our rules
above now give a particularly easy recursive description of when
a node is large, small, or unlabeled:
\begin{itemize}
\item[a.] If one or more children of $v$ is small, then $v$ is large. 
\item[b.] If all children of $v$ are large, then $v$ is small. 
\item[c.] Otherwise, no child of $v$ is small, and at least one child is
  unlabeled.  In this case, $v$ is unlabeled.
\end{itemize}
\begin{figure}
   \begin{center}
      \includegraphics[width=6.0in]{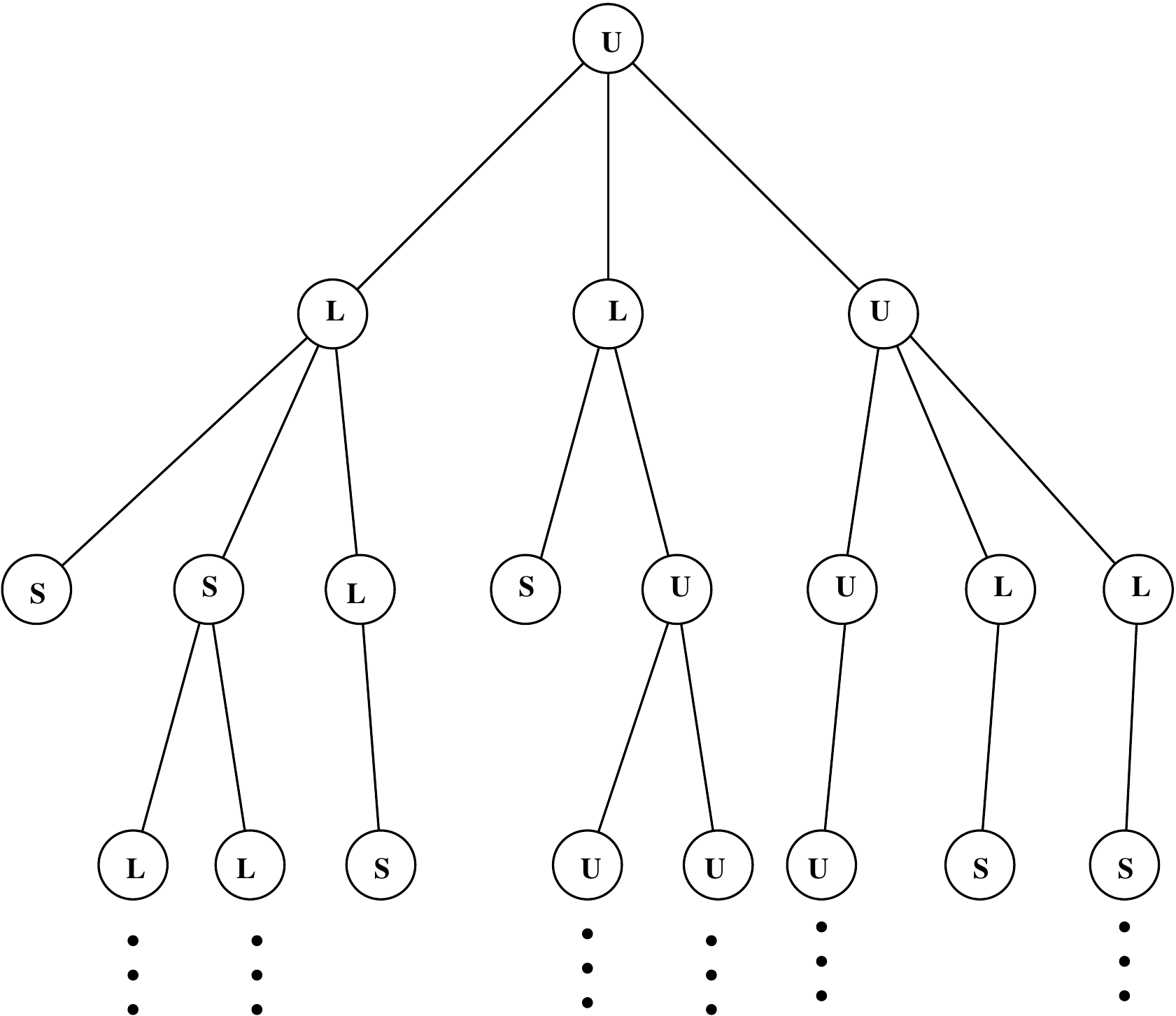}
      \caption{An example of Karp-Sipser labeling.}
      \label{fig:small-large-unlabeled}
   \end{center}
\end{figure}
We call this process \emph{Karp-Sipser labeling}, since it is a 
bottom-up version of the Karp-Sipser algorithm~\cite{ks}, which generates an independent 
set $S$ in a graph by choosing a vertex $v$ of degree 1 or 0, placing $v$ in $S$, 
and removing $v$ and its neighbor, if any,  from the graph. 
See Figure~\ref{fig:small-large-unlabeled}.

Starting from the leaves, which are small, one can work upward 
through the tree, using rules 1 and 2 to assign labels to all the
small and large nodes.  The nodes that remain unlabeled after
this (infinite) process are the ones we called ``unlabeled'' above.
It is easy to see that, by induction, each unlabeled node sits on
top of an infinite leafless subtree of unlabeled nodes.  The
unlabeled nodes in this tree may also have additional children
that are labeled large, who in turn have other children, about
which we are not concerned.

Now, suppose we cut off our tree at depth $R$, and set a pair of
boundary conditions on these nodes, that respects the labeled 
nodes,
and either occupies all or none of the remaining boundary 
nodes.  More precisely, under the first boundary condition, the
occupied nodes at depth $R$ are exactly the ones labeled "small,"
while under the second boundary condition, the unlabeled nodes
are also occupied.

In this case, it is easy to see by induction that, subject to this new
boundary condition, all the labeled nodes at depth $< R$ will keep
their original labels, and therefore the previously unlabeled
nodes at depth $i < R$ will either be all large or all small,
depending on the parity of $R - i$ and which of the two boundary
conditions was set.  

Now let $p_S$, $p_L$ and $p_U$ denote the probabilities that the root 
is labeled `small', `large' or `unlabeled' respectively. We have 
\[
p_S + p_L +p_U =1
\]

Then, by rules a,  b, and c  above, $p_S$ is the probability that  all the root's children are 
large, while $p_L$ is the probability that at least one child of the root is small.
Since each child is the root of an independently random subtree, which is distributed just 
as the entire tree is, the number of children that are large or small or unlabeled is Poisson-distributed 
with mean $dp_L$ or $dp_S$ or $dp_U$ respectively. This gives
\begin{equation}
\label{eq:plarge-psmall}
p_L = 1-\e^{-d p_S}   \ \ \ \mbox{ and } \ \ \  p_S = \e^{-d(p_S + p_U)} = \e^{-d(1-p_L)}
\end{equation}
Together, these imply
\[
p_S = \e^{-d\e^{-dp_S}}
\]
Letting $f$ denote the function $f(x) = \e^{-dx}$, we see that $p_S$ is a fixed point of $f \circ f$.  One fixed point of $f \circ f$ is the (unique) fixed point of $f$. 
Using Lambert's $W$ function, where $z = W(z)\e^{W(z)}$, this fixed point can be written 
as $W(d)/d$. In fact, when $d \le e$ this is the only real fixed point. In that case,
\[
p_S = W(d)/d,  \ \ \mbox{ and } \ \  p_L = 1 - f(p_S) = 1 - p_S
\]
so that $p_U =0$ and the root is labeled with probability 1. 
When $d > e$, on the other hand, the smallest real fixed point of $f\circ f$ is strictly smaller than $W(d)/d$, and is not a fixed point of $f$. In that case, the smallest fixed point is $p_S$, and hence $p_U >0$,  \ie,  with constant probability the root remains unlabeled.
 
We remark that all this corresponds exactly to the rigorous results on the Karp-Sipser algorithm~\cite{ks, ks-revisited}. On $G(n, p = d/n)$, if $d < e$ then the algorithm finds a maximal independent set, except for a core that consists w.h.p. of $O(\log n)$ vertex-disjoint cycles.

We are now ready to prove our upper and lower bounds.

\subsection{Upper Bound}

In this section we will analyze the situation when $\lambda = \infty$ and $d>\e$. 
Recall that in this case the root has positive probability $p_U$ to be unlabeled. Moreover, 
regardless of the root's label,  the number of children of the root that are, respectively, 
small, large and unlabeled are independent Poisson random variables with parameter, 
respectively, $d p_S$, $d p_L$ and $d p_U$.

It follows that  with positive probability,  the root is unlabeled and has 
at least two unlabeled children (and no small children.)
In this event, based on the parity of $R$, one boundary condition at depth $R$  
forces both those unlabeled  children to be occupied while the other forces them 
both to be unoccupied. Since the independent set must 
be of maximum size, if both are occupied then the root is forced to be unoccupied, 
while  if both are unoccupied, the root is forced to be occupied. Since these two 
alternatives remain possible, independent of $R$, 
there is no weak spatial mixing at the root.
We have shown the following, which implies the second half of 
Theorem~\ref{thm:summary-infinite}
\begin{theorem}
For $\lambda = \infty$ and $d > \e$, with positive probability, the $\Poi(d)$ tree does not have WSM at the root.
\end{theorem}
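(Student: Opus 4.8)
The plan is to make rigorous the sketch preceding the statement: carve out a positive-probability event on which the root is unlabeled, has at least two unlabeled children, and has no small child; and on that event exhibit, for every radius $R$, a pair of depth-$R$ boundary conditions that pin the root's spin to opposite values.

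First I would fix the event. Since $d>\e$, the analysis of the recurrence $p_S=\e^{-d\e^{-dp_S}}$ above yields $p_U>0$. Each child of the root is independently small, large, or unlabeled with probabilities $p_S,p_L,p_U$ (its subtree being a fresh $\Poi(d)$ tree), and the number of children is $\Poi(d)$ independent of these labels, so by Poisson thinning the numbers of small, large, and unlabeled children of the root are independent, distributed as $\Poi(dp_S)$, $\Poi(dp_L)$, $\Poi(dp_U)$. Hence
\[
\Prob\bigl(\text{no small child and at least two unlabeled children}\bigr)=\e^{-dp_S}\bigl(1-(1+dp_U)\e^{-dp_U}\bigr)>0,
\]
and on this event $\mathcal{E}$ the root is labeled unlabeled by rule (c). I would then condition on a tree in $\mathcal{E}$ and fix two unlabeled children $u_1,u_2$ of the root; by the recursive description, each $u_j$ sits atop an infinite leafless subtree of unlabeled nodes.

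Next, for each $R$ I would take the two boundary conditions at depth $R$ already described: both assign every labeled depth-$R$ node its forced spin (occupied if small, unoccupied if large); $\sigma_R$ occupies all unlabeled depth-$R$ nodes, while $\tau_R$ leaves them all unoccupied. The induction recorded just before the statement shows that under either boundary condition the labeled nodes above depth $R$ retain their labels, and each previously unlabeled node at depth $i<R$ becomes forced --- occupied or unoccupied according to the parity of $R-i$ under $\sigma_R$, and the opposite parity under $\tau_R$. Since $u_1$ and $u_2$ both lie at depth $1$, one of $\sigma_R,\tau_R$ forces both of them occupied and the other forces both unoccupied. In the first case the root, having an occupied child, is unoccupied with probability $1$; in the second case every child of the root is unoccupied (the large ones by their labels, the unlabeled ones by the forcing, and there are no small ones), so the root is occupied in every maximum independent set, hence with probability $1$. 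Thus $\|\mu_{\Psi_R}^{\sigma_R}-\mu_{\Psi_R}^{\tau_R}\|_{\{\rootvtx\}}=1$ for every $R$.

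Finally, along the triples $(\Psi_R,\sigma_R,\tau_R)$ with $\Lambda=\{\rootvtx\}$ we have $\dist(\Lambda,\bd\Psi_R)=R\to\infty$ while the total variation distance of the root-marginals stays at $1$, so WSM fails at the root on $\mathcal{E}$, an event of positive probability. The one step that needs care is the forcing/propagation claim --- that ``respecting the labels'' at depth $R$ makes all labeled ancestors rigid and collapses each unlabeled subtree into a parity-forced chain --- but this is precisely the induction already carried out above, so I would simply invoke it. I would also note in passing that subtrees which die out before depth $R$ cause no difficulty, since their nodes are already labeled (a childless node is small) and hence are not part of the depth-$R$ boundary.
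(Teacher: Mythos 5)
Your proposal is correct and matches the paper's proof essentially step for step: the same positive-probability event (root unlabeled, no small children, at least two unlabeled children, via $p_U>0$ for $d>\e$ and the independence of the Poisson counts of small/large/unlabeled children), the same pair of label-respecting boundary conditions at depth $R$, and the same parity-forcing induction pinning the root's marginal to opposite point masses for every $R$. One small imprecision worth fixing: a ``large'' label does not force a node to be unoccupied (it only means its non-occupation probability is nonzero), so in your second case the justification should be that after relabeling \emph{all} of the root's children are large, whence rule (b) makes the root small and hence occupied with probability $1$ --- the same informal shortcut the paper itself takes, and harmless here since the relabeling induction you invoke supplies exactly this.
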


We remark that if the Poisson tree is infinite, it almost surely contains \emph{some} node which is unlabeled and has at least two unlabeled children.

\subsection{Lower Bound}

In this section we analyze the situation when $d\le \e$. Recall that in this case 
$p_S = W(d)/d$ was the unique fixed point of $f(x)= e^{-dx}$, so that 
\begin{equation}\label{eq:ps}
p_S = e^{-dp_S}
\end{equation}
and by~\eqref{eq:plarge-psmall}, $p_L = 1-e^{-dp_S}  = 1-p_S$, \ie, the root is labeled as either `small' or `large' with probability 1. Moreover, this labeling obeys the rules that 
\begin{itemize}
\item all children of a small node are large, and
\item at least one child of a large node is small.
\end{itemize}

To what do these labels correspond? Intuitively, a vertex being labeled `small' or `large'
respectively, corresponds to having non-occupation probabilities (as the root of its subtree) 
that are small or large respectively. For finite $\lambda \gg 1$, roughly speaking, this means 
$O(1/\lambda)$ or $\Theta(1)$ respectively.  Note however, that this intuition can sometimes be incorrect, for instance a node with very many children, all ``large,'' may have a large non-occupation probability, even though it receives a label of ``small."  Another example where the above intuition fails is for nodes at the root of a subtree isomorphic to a very long path,
specifically one of length $\omega(\sqrt{\lambda})$.  Although the nodes in this path are labelled with alternating ``small" and ``large" labels, actually almost all the conditional non-occupation probabilities will be approximately $1/\sqrt{\lambda}$.

When $\lambda$ is infinite, this becomes 
a distinction of zero vs. non-zero. In other words, conditioned on its parent being unoccupied, 
(or equivalently, looking at it as the root of it subtree), if vertex $v$ is labeled `large' then 
there are maximum independent sets on its subtree which do not contain $v$ (\ie,  there are configurations in which $v$ is unoccupied), and $a_v >0$,  whereas if $v$ is labeled `small' then \emph{every} maximum independent set contains $v$ (\ie, $v$ is occupied in all configurations) and $a_v=0$.

Now consider a `large' node $v$ with two or more  `small' children. Looking at the recurrence~\eqref{eq:bp}, and the rules for arithmetic in the ring $\mathbb{R}[\lambda^{-1}]/(\lambda^{-2})$, we see that regardless of the non-occupation probabilities of all the other children of $v$, $a_v =1$.

In other words, if $v$ has two or more children that are
probably occupied, then $v$ is probably empty, regardless of what
other children it has.  We say in this situation that $a_v$ is known.
More generally, we say that  for `large' $v$, $a_v$ is known whenever it is determined 
by a finite subtree of $v$'s descendants.  In particular, known $a_v$s are rational.
For technical reasons, we will not say $a_v$ is known for all $v$ that are `small', but rather only those $v$ all of whose children are known

Let $\k_L$ and $\k_S$ denote the probability that $a_v$ is large
and known, or small and known, respectively.  If $a_v$ is large, it is
known either if it has two or more small children, or if all its children are
known and exactly one of them is small. If $a_v$ is small, then it is known if and only if all its
children (which are large) are known.  This gives us the equations
\begin{align}
\k_L &= 1-(1+d p_S) \,\e^{-d p_S} + d \k_S \,\e^{-d
  p_S} \,\e^{-d (p_L - \k_L)} 
\\ 
\k_S &= \e^{-d p_S}
\,\e^{-d (p_L-\k_L)} \, . 
\end{align}
Simplifying and combining with~\eqref{eq:ps} gives the
relations
\begin{align}
p_L - \k_L &= d (p_S^2 - \k_S^2) \label{eq:klarge}
\\ 
\k_S &= p_S  \,\e^{-d(p_L-\k_L)} \label{eq:ksmall}\, .
\end{align}
Rearranging terms and once again using~\eqref{eq:ps} we see that 
\[
\k_L = 1 - (1+d p_S) p_S + d
\e^{-2d(1-\k_L)} 
\]
so that $\k_L$ is a fixed point the function 
\[
g(x) := 1-(1+W(d))\frac{W(d)}{d} + d\e^{-2d(1-\k_L)}\, .
\]

The system of equations~\eqref{eq:klarge} and \eqref{eq:ksmall} always has 
$(\k_L, \k_S) = (p_L, p_S)$ as one solution.
Additionally, when $d$ is sufficiently large, there is a second solution
where $\k_L < p_L$ and $\k_S < p_S$, 
corresponding to the fact that for large enough $d<\e$ there are graphs for which even though the root $v$ is labeled ``large", the actual value of $a_v$ is not determined by any finite subtree of the Poisson tree.  
The threshold where these roots appear is
the $d$ such that
\[
g'(p_L) = 1 = 2 d^2 p_S^2 \, ,
\]
which with~\eqref{eq:plarge-psmall} implies
\[
d = \frac{\e^{1/\sqrt{2}}}{\sqrt{2}} = 1.434... \, .
\]

\section{Finite $\lambda$ case: Lower bound}\label{sec:finlam-SSM-lb}

In this section we will derive a lower bound on the SSM threshold for the 
Poisson tree. This proves part~\ref{part:SSM-1/d} of Theorem~\ref{thm:summary-finite}.

By Remark~\ref{rmk:SSM=WSM-forall-subgraphs}, in order to show SSM, it suffices to show WSM for any subtree of $\Poi(d)$.

Let $T$ be a subtree of $\Poi(d)$ and let $\rootvtx$ be the root of $T$. 
For $R>0$, let $T_R$ denote the truncation of $T$ to depth $R$, and let 
$\bd T_R$ denote the boundary of $T_R$, \ie, the vertices of $T$ at depth $R$.
We want to study the influence 
of the non-occupation probability values at $\bd T_R$  (set adversarially) on 
the value of $a_{\rootvtx}$. 
For notational convenience we will require the adversary to set the values 
at $\bd T_R$ from $[ \frac{1}{1+\lambda}, 1 ]$. Since the 
range of the function $x \mapsto \frac{1}{1 + \lambda P x}$ on $[0,1]$ is 
contained in $[ \frac{1}{1+\lambda}, 1]$ when $0< P \le 1$, this 
corresponds to allowing the adversary to set values in $[0,1]$ on $\bd T_{R+1}$.

Recall, from Proposition~\ref{prop:TFAE-WSM}, that to show WSM for $T$, it 
suffices to show that there is a well-defined non-occupation probability 
$a_{\rootvtx}^*$ at the root $\rootvtx$ of $T$. This, in turn,  would follow if
the non-occupation probabilities induced at $\rootvtx$ by setting the vertices 
in $\bd T_{R+1}$ to all zeroes or all ones converged to the same value as 
$R\to \infty$.

Let $w$ be a vertex of $\bd T_R$. Suppose the values at all the other vertices 
in $\bd T_R$ are fixed, and only the value $a_w$ at $w$ is varied. Let 
$a_{\rootvtx}^{[a_w=1/(1+\lambda)]}$ and $a_{\rootvtx}^{[a_w=1]}$ be the values of $a_{\rootvtx}$ 
when $a_w$ is set to $\frac{1}{1+\lambda}$
or 1 respectively. Then by the mean value theorem, 
\[
\left|a_{\rootvtx}^{[a_w=1/(1+\lambda)]} - a_{\rootvtx}^{[a_w=1]}\right|  \le \max_{a_w \in \left[ 
\frac{1}{1+\lambda},1 \right]} 
\left| \frac{\partial a_{\rootvtx}}{\partial a_w}\right|
\]
Now, if $a_{\rootvtx}^{\mathbf{0}}$ and $a_{\rootvtx}^{\mathbf{1}}$ are the values of $a_{\rootvtx}$ when the 
vertices at depth $R+1$ have been set respectively to all zeroes or all ones
(\ie, the vertices in $\bd T_R$ set to all ones or all $\frac{1}{1+\lambda}$)
then by varying 
the values at the boundary vertices one at a time and applying the triangle 
inequality, we see that 
\begin{equation}\label{eq:av0av1bd}
\left| a_{\rootvtx}^{\mathbf{0}}-a_{\rootvtx}^{\mathbf{1}} \right|  
\le \sum_{w \in \bd T_R} 
\max_{a_w \in  \left[ \frac{1}{1+\lambda},1 \right] } \left| 
\frac{\partial a_{\rootvtx}}{\partial a_w}\right|
\end{equation}

Fix $w \in \bd T_R$ and let ${\rootvtx} = w_0 , w_1, \dots w_{R-1}, w_R= w$
be the path from the root to $w$.  Let $a_i = a_{w_i}$ and let 
$P_i = \prod_{x} a_x$ where the product is taken over all the children 
$x$  (if any) of $w_i$ other than $w_{i+1}$. 
Then for all $i$, 
\[
a_i = \frac{1}{1+\lambda a_{i+1} P_i}
\]
Note that $a_i \ge \frac{1}{1+\lambda} >0$ for al $i$.
Differentiating $a_i$ with respect to $a_{i+1}$, with some algebraic 
manipulations, we have
\[
\frac{\partial a_i}{\partial a_{i+1}} = \frac{-\lambda P_i}
{(1+\lambda a_{i+1}P_i)^2} =
\frac{-a_i(1-a_i)}{a_{i+1}}
\]
Repeatedly applying the chain rule, we see that
\[
\frac{\partial a_{\rootvtx}}{\partial a_w} = \frac{\partial a_0}{\partial a_{R}} 
= \prod_{i=0}^{R-1} \frac{\partial a_i}{\partial a_{i+1}}
= \prod_{i=0}^{R-1} \frac{-a_i(1-a_i)}{a_{i+1}}
= (-1)^{R}  \frac{a_0}{a_R}\prod_{i=0}^{R-1}(1-a_i)
\]
Since 
$\frac{1}{1+\lambda} < a_i \le 1$,
\begin{equation}\label{eq:leafpartial}
\left|\frac{\partial a_{\rootvtx}}{\partial a_w}\right|   \le \frac{a_0}{a_R}
\prod_{i=0}^{R-1}(1-a_i) \le (1+\lambda)\prod_{i=0}^{R-1}(1-a_i).
\end{equation}

Note that $a_i \ge\frac{1}{1+\lambda a_{i+1}}$, so 
that $(1-a_i)(1-a_{i+1}) \le \frac{\lambda a_{i+1}(1-a_{i+1})}{1+\lambda a_{i+1}}$.
To bound the partial derivative, we want to maximze this subject to the constraint that $a_{i+1}\ge \frac{1}{1+\lambda}$.

Consider the function $x \mapsto  \frac{\lambda x(1-x)}{1+\lambda x}$ on the 
interval $[\frac{1}{1+\lambda}, 1]$. Differentiating, 
we see that  when $\lambda \ge \frac{1+\sqrt{5}}{2}$, it is maximized at 
$\frac{\sqrt{1+\lambda} -1}{\lambda}$ and that 
the maximum value is $1-\frac{2}{\lambda}\left(\sqrt{1+\lambda}-1\right)$. 
Thus $(1-a_i)(1-a_{i+1}) \le 1-\frac{2}{\lambda}\left(\sqrt{1+\lambda}-1\right)$.
 Applying 
this to consecutive pairs in $\prod_{i=0}^{R-1}(1-a_i)$, we have, for even $R$
 \begin{equation}\label{eq:fancybd}
\left|\frac{\partial a_{\rootvtx}}{\partial a_w}\right|  \le 
(1+\lambda)\prod_{i=0}^{R-1}(1-a_i) \le
(1+\lambda)
\left( 1-\frac{2}{\lambda}\left(\sqrt{1+\lambda}-1\right)\right)^{R/2}
\end{equation}

On the other hand, if $\lambda < \frac{1+\sqrt{5}}{2}$, then the derivative of
$\frac{\lambda x(1-x)}{1+\lambda x}$ is never zero in 
$[\frac{1}{1+\lambda}, 1]$, and the function is maximized at 
$\frac{1}{1+\lambda}$. Thus 
$(1-a_i)(1-a_{i+1}) \le \frac{\lambda^2}{(1+\lambda)(1+2\lambda)} $, and once again, applying this to consecutive pairs, for even $R$,
\begin{equation}\label{eq:fancybd2}
\left|\frac{\partial a_{\rootvtx}}{\partial a_w}\right|  \le (1+\lambda)
\left( \frac{\lambda^2}{(1+\lambda)(1+2\lambda)}\right)^{R/2}
\end{equation}

Let us now re-examine \eqref{eq:av0av1bd}.  We have 
\begin{equation}\label{eq:av0av1bd2}
\left| a_{\rootvtx}^{\mathbf{0}}-a_{\rootvtx}^{\mathbf{1}} \right|  \le \sum_{w \in \bd T_R} 
\max_{a_w \in \left[\frac{1}{1+\lambda},1\right]} \left| \frac{\partial a_{\rootvtx}}{\partial a_w}\right|
\le |\bd T_R| B_{\lambda, R}
\end{equation}
where $B_{\lambda, R}$ is an upper bound on 
$\left| \frac{\partial a_{\rootvtx}}{\partial a_w}\right|$.

Since $T$ is a subtree of a $\Poi(d)$ tree, it follows from Proposition~\ref{prop:R2dR} that, almost surely, for all sufficiently large $R$
\begin{equation}\label{eq:bdsize}
|\bd T_R| \le R^2 d^{R}.
\end{equation}

If $\lambda \ge \frac{1+\sqrt{5}}{2}$ then substituting 
$B_{\lambda, R}=  (1+\lambda)
\left( 1-\frac{2}{\lambda}\left(\sqrt{1+\lambda}-1\right)\right)^{R/2}$ into 
\eqref{eq:av0av1bd2}, we have 
\[
\left| a_{\rootvtx}^{\mathbf{0}}-a_{\rootvtx}^{\mathbf{1}} \right|  \le R^2d^R 
(1+\lambda)
\left( 1-\frac{2}{\lambda}\left(\sqrt{1+\lambda}-1\right)\right)^{R/2}
= R^2 (1+\lambda)
\left[d^2\left( 1-\frac{2}{\lambda}\left(\sqrt{1+\lambda}-1\right)\right)
\right]^{R/2}
\]
which goes to 0 as $R \rightarrow \infty$ as long as 
$d^2\left( 1-\frac{2}{\lambda}\left(\sqrt{1+\lambda}-1\right)\right) <1$, \ie,
$\lambda < \frac{4d^2}{(d^2-1)^2}$. 

If $\lambda < \frac{1+\sqrt{5}}{2}$ then substituting 
$B_{\lambda, R}=  (1+\lambda)\lambda^R (1+\lambda)^{-R/2}(1+2\lambda)^{-R/2}$
into \eqref{eq:av0av1bd2}, we have 
\[
\left| a_{\rootvtx}^{\mathbf{0}}-a_{\rootvtx}^{\mathbf{1}} \right|  \le R^2d^R 
(1+\lambda)\frac{\lambda^R}{(1+\lambda)^{R/2}(1+2\lambda)^{R/2}}
= R^2 (1+\lambda)
\left[\frac{d^2\lambda^2}{(1+\lambda)(1+2\lambda)}\right]^{R/2}
\]
which  goes to 0 as $R \rightarrow \infty$ as long as 
$d^2\lambda^2 < (1+\lambda)(1+2\lambda)$, \ie, $\lambda<\frac{3+\sqrt{1+4d^2}}{2d^2-4}$.

The transition point, $\lambda= \frac{1+\sqrt{5}}{2}$ corresponds to $d=\sqrt{2+\sqrt{5}}$ which is approximately 2.058. 

Thus we have shown WSM for independent sets with fugacity 
$\lambda$ on any subtree $T$ of a $\Poi(d)$ tree, when  
\[
\lambda < \begin{cases}\frac{4d^2}{(d^2-1)^2} & \mbox{when } d<\sqrt{2+\sqrt5}\\
\frac{3+\sqrt{1+4d^2}}{2d^2-4} & \mbox{otherwise.}
\end{cases}
\] 
By Remark~\ref{rmk:SSM=WSM-forall-subgraphs} we have SSM for $\Poi(d)$ for $\lambda$ in the same range.

\section{Mixing for small $d$}

In this section we prove part~\ref{part:1.179} of
Theorem~\ref{thm:summary-finite}, which we now restate in an
equivalent form.

\begin{theorem}
\label{thm:dmorethanone}
For all $d < 1.179...$, the $\Poi(d)$ tree almost surely has weak spatial
mixing for all finite $\lambda > 0$.
\end{theorem}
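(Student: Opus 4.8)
The plan is to prove the equivalent statement that, almost surely, the root non-occupation probability $a^*_{\rootvtx}$ exists simultaneously for every finite $\lambda$; by Proposition~\ref{prop:TFAE-WSM} this is equivalent to WSM. Since for any truncation radius the extreme values of $a_{\rootvtx}$ are attained by the all-zeros and all-ones boundary conditions, and these bracket every other value, it suffices to show that $|a_{\rootvtx}^{\mathbf 0} - a_{\rootvtx}^{\mathbf 1}| \to 0$ as the radius goes to infinity. For this I would reuse the estimate already derived in Section~\ref{sec:finlam-SSM-lb}: writing $\rootvtx = w_0, w_1, \dots, w_R = w$ for the path from the root to a boundary vertex $w \in \bd T_R$, equations~\eqref{eq:av0av1bd} and~\eqref{eq:leafpartial} give
\[
\bigl|a_{\rootvtx}^{\mathbf 0} - a_{\rootvtx}^{\mathbf 1}\bigr| \;\le\; (1+\lambda)\sum_{w \in \bd T_R}\ \prod_{i=0}^{R-1}(1 - a_{w_i}),
\]
valid under every boundary condition.

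The one new ingredient is a bound on $\prod_i (1 - a_{w_i})$ that does not depend on $\lambda$, coming from leaves hanging off the path. A leaf of $T$ has non-occupation probability $1/(1+\lambda)$, so if $w_i$ has a child $x \neq w_{i+1}$ that is a leaf, then from the recurrence~\eqref{eq:bp}, and because all other factors are at most $1$,
\[
\frac{1}{a_{w_i}} \;=\; 1 + \lambda\, a_{w_{i+1}} \prod_{x' \neq w_{i+1}} a_{x'} \;\le\; 1 + \frac{\lambda}{1+\lambda} \;<\; 2,
\]
hence $1 - a_{w_i} < \tfrac12$, uniformly in $\lambda$ and in the boundary condition. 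Define $g_i(w) := \tfrac12$ if $w_i$ has a leaf child other than $w_{i+1}$, and $g_i(w) := 1$ otherwise; this depends only on the tree $T$. Then $\prod_{i=0}^{R-1}(1 - a_{w_i}) \le \prod_{i=0}^{R-1} g_i(w) = 2^{-N_R(w)}$, where $N_R(w)$ is the number of indices $i < R$ at which $w_i$ has a leaf off the path, so that $\sum_{w \in \bd T_R} \prod_i (1 - a_{w_i}) \le V_R := \sum_{w \in \bd T_R} 2^{-N_R(w)}$, a random variable independent of $\lambda$.

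It remains to control $\Exp[V_R]$, which is a first-moment computation for the $\Poi(d)$ Galton--Watson tree. By the many-to-one (spine) identity, $\Exp[V_R]$ equals $d^R$ times the expectation, along the size-biased spine, of $\prod_{i=0}^{R-1} g_i$; along that spine each vertex has, besides its distinguished spine child, an independent $\Poi(d)$ collection of ordinary $\Poi(d)$ subtrees hanging off it, each of which is a bare leaf with probability $\e^{-d}$, so the number of leaves off the spine at a given level is $\Poi(d\e^{-d})$ and these counts are independent across levels. Hence
\[
\Exp[V_R] \;=\; \Bigl(\, d\bigl(\tfrac12 + \tfrac12 \e^{-d\e^{-d}}\bigr)\Bigr)^{R} \;=\; \rho^R, \qquad \rho := \frac d2\bigl(1 + \e^{-d\e^{-d}}\bigr)
\]
(the same $\rho$ also comes out of a direct induction on $R$, conditioning on the root's children and on which of them are leaves; the $1-\e^{-d}$ factors cancel, the one identity needed being $\Exp[k(1-\e^{-d})^{k-1}] = d\,\e^{-d\e^{-d}}$ for $k \sim \Poi(d)$). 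The function $d \mapsto d(1 + \e^{-d\e^{-d}})$ is strictly increasing on $(0,\infty)$ and equals $2$ exactly at $d = 1.179\dots$, so $\rho < 1$ for $d < 1.179\dots$. Then $\sum_R \Exp[V_R] = \sum_R \rho^R < \infty$, so $\sum_R V_R < \infty$ almost surely and $V_R \to 0$ almost surely; on that probability-one event, which makes no reference to $\lambda$, we get $|a_{\rootvtx}^{\mathbf 0} - a_{\rootvtx}^{\mathbf 1}| \le (1+\lambda) V_R \to 0$ for every finite $\lambda$, so $a^*_{\rootvtx}$ exists and WSM holds.

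I expect the main obstacle to be the first-moment computation: one must carry out the spine argument (or the direct recursion) carefully enough that the off-path leaf structure is handled correctly and the resulting constant $\rho$ matches the stated threshold $1.179\dots$ exactly. A secondary point needing care is verifying that the local inequality $1 - a_{w_i} < \tfrac12$ holds for all $\lambda$ and all boundary conditions at once, so that a single almost-sure event suffices for every finite fugacity; the remaining steps are routine in light of Section~\ref{sec:finlam-SSM-lb} and Proposition~\ref{prop:TFAE-WSM}.
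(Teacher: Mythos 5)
Your proposal is correct and takes essentially the same route as the paper: the paper also bounds the path influence $\prod_i(1-a_i)$ using the childless children hanging off the path (Poisson with mean $d\e^{-d}$), and its $\lambda$-dependent per-vertex factor $\lambda/((1+\lambda)^{c_i}+\lambda)$, maximized over $\lambda$, yields exactly your condition $\tfrac{d}{2}\left(1+\e^{-d\e^{-d}}\right)<1$, i.e.\ $d<1.179\ldots$. Your $\lambda$-free bound $1-a_i\le\tfrac12$ and the explicit many-to-one/first-moment step just package the same computation, making the uniformity in $\lambda$ and the almost-sure conclusion slightly more explicit than the paper's terse expectation argument.
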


\begin{proof}
Recall our formula for the influence of a leaf $w$ along the path $v=v_0, v_1, \ldots, v_R = w$:
\begin{equation}
\label{eq:influence}
\abs{ \frac{\partial \ln a_v}{\partial \ln a_w} } = \prod_{i=1}^{R-1} (1-a_i) \, . 
\end{equation}
We claim that the existence of this path tells us nothing about the other branches of the tree that do not survive to depth $R$.  In particular, the number of childless children of each $v_i$ for $0 \le i < R-1$ is independent, and Poisson-distributed with mean $\mu = d \e^{-d}$.

The presence of these small leaves gives us a better upper bound on $1-a_i$.  In particular, if $v_i$ has $c_i$ childless children, then 
\[
1-a_i 
\le 1 - \frac{1}{1+\lambda \left( \frac{1}{1+\lambda} \right)^{c_i} } 
= \frac{\lambda}{(1+\lambda)^{c_i} + \lambda} \, . 
\]
Thus $w$'s expected influence is at most
\begin{align*}
\Exp_{\{c_i\}} \prod_{i=0}^{R-2} \frac{\lambda}{(1+\lambda)^{c_i} + \lambda} 
= \left( \Exp_c \frac{\lambda}{(1+\lambda)^c + \lambda} \right)^{R-1} 
\le \left( \e^{-\mu} \,\frac{\lambda}{1+\lambda} + (1-\e^{-\mu}) \,\frac{\lambda}{1 + 2\lambda} \right)^{R-1} \, .  
\end{align*}
The expected total influence of all the leaves is this times $d^R$, which is exponentially small if
\[
\e^{-\mu} \,\frac{\lambda}{1+\lambda} + (1-\e^{-\mu}) \,\frac{\lambda}{1 + 2\lambda} < \frac{1}{d} \, . 
\]
The left-hand side is monotonically increasing with $\lambda$, so this inequality holds as long as 
\[
\frac{1+\e^{-\mu}}{2} < \frac{1}{d} \, . 
\]
Substituting $\mu = d \e^{-d}$, we find that this holds for all $d < 1.179$.
\end{proof}

We have made no attempt to optimize the constant in Theorem~\ref{thm:dmorethanone}.  

\begin{figure}
   \begin{center}
      \includegraphics[width=2.0in]{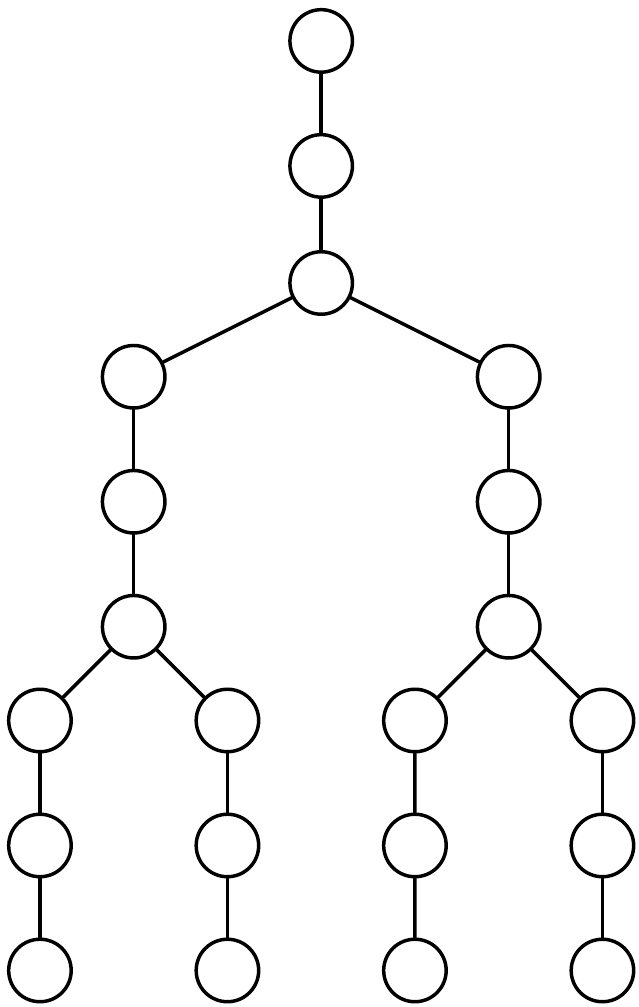}
      \caption{An example of a stretched binary tree with $c = 3$.}
      \label{fig:stretched}
   \end{center}
\end{figure}

\begin{remark} \label{rmk:noSSM} Note that for any $d > 1$, there is a $\lambda$ for which $\Poi(d)$ tree lacks strong spatial mixing.  The reason (as pointed out to us by Allan Sly) is that it possesses, with positive probability, subgraphs that are ``stretched'' versions of the infinite binary tree, which branch every $c$ generations for some constant $c$.  See Figure~\ref{fig:stretched}. Such trees lack weak spatial mixing for sufficiently large $\lambda$, since if 
\[
f_1(a) = \frac{1}{1+\lambda a} 
\quad \text{and} \quad
f_2(a) = \frac{1}{1+\lambda a^2} \, ,
\]
the function 
\[
\underbrace{f_1(f_1(\cdots(f_1}_{\textrm{$c$ times}}(f_2(a)))))
\]
has a stable period-$2$ orbit for sufficiently large $\lambda$.
\end{remark}

\section{Non-mixing just above the threshold} \label{sec:no-WSM-asymp}

In this section we will prove that, for sufficiently large but constant $d$, 
the $\Poi(d)$ tree lacks spatial mixing just above the threshold for 
$d$-regular trees.  First note that the latter is
\[
\frac{d^d}{(d-1)^{d+1}} = \frac{\e}{d} + O(1/d)^2 \, . 
\]

Note that for $z \in [-1, 1]$,
\begin{equation}\label{eq:taylor}
1-z \le \frac{1}{1+z} \le 1-z +z^2
\end{equation}
Let $v$ be a vertex at level $L-1$.  By \eqref{eq:taylor} and the definition 
of $a_v$ \eqref{eq:bp}, we have
\[
1-\lambda \prod_{w}a_w \le a_v \le 1 -\lambda \prod_{w}a_w + \lambda^2 
\prod_{w}a_w^2
\]
where the product is over the children $w$ of $v$, which are at level $L$. 
Taking expectations, we have 
\begin{equation}\label{eq:Expav}
1- \lambda\Exp\left[\prod_{w}a_w\right] \le \Exp a_v \le 
1- \lambda\Exp\left[\prod_{w}a_w\right]+ \lambda^2\Exp\left[\prod_{w}a_w^2\right] 
\end{equation}
Let $a_L$ denote the non-occupation probability of a 
\emph{generic} vertex at level $L$, in a Poisson tree truncated at depth $R$. 
(Note that these are independent and identically distributed.) 
Let $K\sim \Poi(d)$ denote the number of children of vertex $v$, and let 
$a_1, a_2, \dots a_K$ denote the non-occupation 
probabilities of these children. Then the $a_i$s are independent of each other 
and $K$ and each has expectation $\Exp a_{L}$.  So 
\begin{align*}
\Exp\left[\prod_{w}a_w\right] &= \Exp\left[ \ExpCond{\prod_{i=1}^{K}a_i}{K}
\right]\\
&=\Exp\left[\prod_{i=1}^{K} \ExpCond{a_i}{K} \right] \\
&=\Exp\left[\left[\Exp a_{L}\right]^K\right] \\
&=\sum_{k=0}^\infty  \frac{\e^{-d}d^k}{k!}[\Exp a_{L}]^k\\
&=\e^{-d(1-\Exp a_{L})}
\end{align*} 
Similarly, 
\[
\Exp\left[\prod_{w}a_w^2\right] = \e^{-d(1-\Exp a_L^2)}
\]
Substituting into~\eqref{eq:Expav}, we have
\begin{equation}
\label{eq:phiq-bound0}
1- \lambda \e^{-d(1-\Exp a_{L})} 
\le \Exp a_{L-1} 
\le 1- \lambda \e^{-d(1-\Exp a_{L})} + \lambda^2 \,\e^{-d(1-\Exp a_{L}^2)} \, .
\end{equation}
If we define
\begin{equation}
\label{eq:phiq}
\phi_q(z) = 1 - \lambda \e^{-d(1-z)} + q \lambda^2 \, ,
\end{equation}
we can rewrite~\eqref{eq:phiq-bound0}
\begin{equation}
\label{eq:phiq-bound}
\phi_0(\Exp a_{L-1}) 
\le \Exp a_L 
\le \phi_q(\Exp a_{L-1}) \, , 
\end{equation}
where 
\[
q  = \e^{-d(1-\Exp a_{L}^2)} \in [0,1] \, . 
\]

The following lemma shows that for $\lambda$ just above $\e/d$, even if an adversary controls the second moment $\Exp a_{L}^2$ and hence the coefficient $q$ of the quadratic term, this function oscillates between two disjoint intervals.  It follows that the expected occupation probability at the root alternates between high and low values based on the parity of the depth of the tree, implying a lack of spatial mixing.

\begin{lemma}\label{lem:oscillation}
For fixed $\lambda$, $d$, and $q \in [0,1]$, let $\phi_q(z)$ be defined as in~\eqref{eq:phiq}. 
Let $\lambda = c/d$ where $c > \e$ is a constant.  Then there are constants 
$d^*$, $b_1$, and $b_2$ such that, for all $d > d^*$ and all $q \in [0,1]$, 
\begin{align*}
\forall z > 1-b_1/d :&  \, \phi_q(z) < 1-b_2/d \\ 
\forall z < 1-b_2/d :&  \, \phi_q(z) > 1-b_1/d \, . 
\end{align*} 
and where $b_1 < b_2$.
\end{lemma}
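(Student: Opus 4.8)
## Proof Plan for Lemma~\ref{lem:oscillation}

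The plan is to treat $\phi_q$ as a perturbation of the ``clean'' function $\phi_0(z) = 1 - \lambda\e^{-d(1-z)}$ and to choose the two thresholds $1-b_1/d$ and $1-b_2/d$ by analyzing where this clean function sends points near $1$. Write $z = 1 - s/d$ for a rescaled coordinate $s \ge 0$; then $\phi_0(z) = 1 - (c/d)\e^{-s}$, so $\phi_0(1-s/d) = 1 - \tfrac{1}{d}(c\,\e^{-s})$. In other words, in the $s$-coordinate the map $\phi_0$ is essentially $s \mapsto c\,\e^{-s}$, exactly the iteration $f_d \circ f_d$ analyzed for the $d$-regular tree, whose instability for $c > \e$ is classical. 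The quadratic correction term contributes at most $q\lambda^2 \le (c/d)^2 = O(1/d^2)$, which in the $s$-coordinate is an additive perturbation of size $O(1/d)$; for $d$ large this is negligible compared to the gap we will exhibit between the two intervals.

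First I would pin down the dynamics of $g(s) := c\,\e^{-s}$ on $[0,\infty)$. This map has a unique fixed point $s_0 = W(c)$ (using $z = W(z)\e^{W(z)}$), with $g'(s_0) = -W(c)$; since $c > \e$ we have $W(c) > 1$, so the fixed point is unstable and $g \circ g$ has an attracting period-$2$ orbit $\{s_-, s_+\}$ with $s_- < s_0 < s_+$. I would fix two values $b_1 < b_2$ bracketing this orbit, concretely any $b_1, b_2$ with $s_- < b_1 < s_0 < b_2 < s_+$ chosen so that (i) $g(b_1) > b_2$ and (ii) $g(b_2) < b_1$ — such a choice exists because $g$ is decreasing, $g(s_0) = s_0$, and the period-$2$ points strictly overshoot: $g(s_-) = s_+ > b_2$ once $b_1$ is close enough to $s_-$, and symmetrically for $b_2$. (One must be slightly careful that $b_2 < s_+$ while still $g(b_1) > b_2$; monotonicity and continuity of $g$ make this routine.) Translating back: the claims ``$z > 1-b_1/d \Rightarrow \phi_0(z) < 1-b_2/d$'' and ``$z < 1-b_2/d \Rightarrow \phi_0(z) > 1-b_1/d$'' are exactly (i) and (ii), because $\phi_0$ is increasing in $z$ and $\phi_0(1-s/d) = 1 - g(s)/d$, so $z \ge 1-b_1/d$ forces $\phi_0(z) \le \phi_0(1-b_1/d) = 1 - g(b_1)/d < 1 - b_2/d$, and similarly for the other direction — wait, I should double-check the inequality direction: we also need $z \le 1$, i.e.\ $s \ge 0$, which holds since $\lambda\e^{-d(1-z)} \ge 0$ forces $\phi_q(z) \le 1 + q\lambda^2$; I would verify separately that $\phi_q(z)$ stays in a range where $s = d(1-\phi_q(z))$ is positive and bounded, using the one-step containment coming from \eqref{eq:bp} ($a_v \in [\tfrac{1}{1+\lambda},1]$, so $s$ is automatically in a fixed compact interval).

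Next, to handle general $q \in [0,1]$, I would show the perturbation is uniformly small: $|\phi_q(z) - \phi_0(z)| = q\lambda^2 \le c^2/d^2$, so in the $s$-coordinate $|d(1-\phi_q(z)) - d(1-\phi_0(z))| \le c^2/d$. Hence if (i) and (ii) hold for $\phi_0$ with a strict slack $\delta > 0$ in the $s$-coordinate (which we arranged by choosing $b_1,b_2$ strictly inside the period-$2$ bracket), then for all $d > d^* := c^2/\delta$ the same inequalities hold for every $\phi_q$, $q\in[0,1]$. This yields the constants $d^*, b_1, b_2$ with $b_1 < b_2$ as required.

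The main obstacle is not the perturbation argument — that is straightforward once the scaling $z = 1-s/d$ is in place — but rather making the choice of $b_1, b_2$ robust: one needs the two target inequalities $g(b_1) > b_2$ and $g(b_2) < b_1$ to hold \emph{simultaneously} with $b_1 < b_2$, and with uniform slack, for the \emph{fixed} constant $c > \e$. This is where the instability $W(c) > 1$ is essential: it guarantees the period-$2$ orbit of $g\circ g$ is genuinely separated from the fixed point, so there is room to slip $b_1, b_2$ strictly between $s_\pm$ and $s_0$. I would make this quantitative by a short computation with $g'(s_0) = -W(c) < -1$ to get a lower bound on $s_+ - s_-$ in terms of $W(c) - 1$, or simply invoke that $g\circ g$ has an attracting $2$-cycle whose basin contains a neighborhood of $s_0$ minus the fixed point, and take $b_1, b_2$ in that neighborhood. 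Everything else (positivity of $s$, boundedness of all quantities, monotonicity of $\phi_q$ in $z$) is bookkeeping.
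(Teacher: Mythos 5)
Your proposal is correct and takes essentially the same route as the paper: the paper likewise rescales to within $O(1/d)$ of the unstable fixed point $z_0 = 1 - W(c)/d$ (its $\psi_0(\delta) = -(\e^{\delta}-1)\,W(c)$ is exactly your $g(s)=c\,\e^{-s}$ recentered at $s_0=W(c)$), picks $b_1 = W(c)-\delta^*$ and $b_2 = W(c)+\delta^*$ using a derivative bound $\psi_0' < -A$ for some $A \in (1,W(c))$ on a small neighborhood together with monotonicity, and absorbs the perturbation $q\lambda^2 \le c^2/d^2$ by requiring $c^2/d < (A-1)\delta^*$, which is exactly your slack-and-$d^*$ step. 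The only differences are cosmetic: the period-two orbit of $g\circ g$ (whose attractivity you assert but never actually need) is bypassed entirely in favor of your own quantitative fallback via $g'(s_0)=-W(c)<-1$, and note that $\phi_0$ is decreasing in $z$, not increasing --- though your displayed inequality chain already uses the correct (decreasing) direction.
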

\begin{proof}
Since $\phi_0$ is monotonically decreasing, it has a unique fixed point 
$z_0 = \phi_0(z_0)$, namely
\[
z_0 = 1-\frac{b_0}{d} \;\text{ where }\; b_0 = W(\lambda d) = W(c) \, . 
\] 
Here $W(x)$ is Lambert's function, i.e., the unique positive root $y$ 
of $y\e^y = x$.  We have $f'_0(x_0) = -W(d \lambda)$.  If $c > \e$ then 
$W(c) > 1$, making this fixed point unstable.

To focus on $\phi_q$'s behavior near $z_0$ we change variables, setting 
$z=z_0 + \delta/d$.  Then applying $\phi_q$ to $z$ is equivalent to applying 
$\psi_q(\delta)$ to $\delta$, where
\begin{align*}
\psi_q(\delta) = d \cdot (\phi_q(z_0+\delta/d) - z_0) 
=  - (\e^\delta - 1) \,W(\lambda d) + q \lambda^2 d 
=  - (\e^\delta - 1) \,W(c) + \frac{q c^2}{d} \, .
\end{align*}
Since $g'_0(0) = -W(c)$ and $\psi_0$ is analytic, for any constant $1 < A < W(c)$, there is a constant $\tilde{\delta} > 0$ such that 
\[
\forall \delta \in [-\tilde{\delta}, \tilde{\delta}] : \, g'_0(\delta) < -A \, . 
\]
Therefore, for any $\delta^* < \tilde{\delta}$ we have
\begin{align*}
\forall \delta > \delta^*: \,\psi_0(\delta) < -A \delta^*  \mbox{\,\, and \,\,}
\forall \delta < -\delta^*: \,\psi_0(\delta) > A \delta^* \, . 
\end{align*} 
Choose such an $A$ and such a $\delta^*$ with $\delta^* < b_0$.  Finally, since $\psi_0(\delta) \le \psi_q(\delta) \le \psi_0(\delta) + c^2 / d$, if $d > d^*$ is sufficiently large so that 
\[
\frac{c^2}{d} < (A-1) \delta^* \, , 
\]
the proof is completed by setting $b_1 = b_0 - \delta^*$ and $b_2 = b_0 + \delta^*$.
\end{proof}

\section{Asymptotically Optimal Lower Bound}\label{sec:asymp-lb-abbr}

We saw in Section~\ref{sec:no-WSM-asymp} that asymptotically, for large $d$ the 
$\Poi(d)$ tree does not have weak spatial mixing for $\lambda$ just 
above $\e/d$, which is the asymptotic threshold for WSM (and SSM) for the 
$d$-regular tree. We will now show  that  \emph{below} $\e/d$ 
the $\Poi(d)$ tree almost certainly does have weak spatial mixing. 
Specifically we will prove the following result, which is equivalent
to part~\ref{part:WSM} of Theorem~\ref{thm:summary-finite}.
\begin{theorem}\label{thm:asymplb-main}
For all $\gamma \in (0, 1)$, for all sufficiently large $d$,
the $\Poi(d)$ tree with activity $\lambda = (1 - \gamma) \e / d$
exhibits weak spatial mixing with probability $1$.
\end{theorem}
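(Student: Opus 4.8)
The plan is to show that, for $\lambda=(1-\gamma)\e/d$ with $d$ large, the expected gap between the root non-occupation probabilities obtained from the all-zero and all-one boundaries decays geometrically in the truncation depth; summing over depths and applying Tonelli then forces this gap to $0$ almost surely, which by the fourth statement of Proposition~\ref{prop:TFAE-WSM} (valid since $\lambda<\infty$) is exactly WSM almost surely. In more detail: let $a_\rootvtx^{\mathbf 0}$, $a_\rootvtx^{\mathbf 1}$ be the values of $a_\rootvtx$ when the tree is truncated at depth $R$ with the boundary set to all zeros or all ones. These are the extreme values of $a_\rootvtx$ over all depth-$R$ boundaries, and since one step of \eqref{eq:bp} maps $[0,1]$ into $[\tfrac1{1+\lambda},1]$, the range of $a_\rootvtx$ over depth-$(R+1)$ boundaries is contained in that over depth-$R$ boundaries; so these ranges are nested closed intervals of length $|a_\rootvtx^{\mathbf 0}-a_\rootvtx^{\mathbf 1}|$. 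Hence if $|a_\rootvtx^{\mathbf 0}-a_\rootvtx^{\mathbf 1}|\to 0$ then $a^*_\rootvtx$ exists, and if moreover $\sum_R\mathbb{E}|a_\rootvtx^{\mathbf 0}-a_\rootvtx^{\mathbf 1}|<\infty$ then this holds almost surely.

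To bound the expectation, I would combine the leaf-influence formula \eqref{eq:av0av1bd}--\eqref{eq:leafpartial} with a sharper per-level bound than in Section~\ref{sec:finlam-SSM-lb}. For a depth-$R$ leaf $w$ with path $\rootvtx=v_0,\dots,v_R=w$, \eqref{eq:leafpartial} gives $|\partial a_\rootvtx/\partial a_w|\le(1+\lambda)\prod_{i=0}^{R-1}(1-a_{v_i})$ for every boundary. Using $1-a_{v_i}=\lambda\prod_x a_x/(1+\lambda\prod_x a_x)\le\lambda\prod_x a_x$ over all children $x$ of $v_i$, dropping the on-path factor $a_{v_{i+1}}\le 1$ and telescoping, $\prod_{i=0}^{R-1}(1-a_{v_i})\le\lambda^R\prod_x a_x$ where $x$ ranges over all off-path vertices (children of some $v_i$ other than $v_{i+1}$). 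The off-path subtrees are pairwise disjoint, disjoint from the path, independent $\Poi(d)$ trees, and the depth-$R$ boundary restricts independently to each, so maximizing over the boundary yields $\max_{a_w}|\partial a_\rootvtx/\partial a_w|\le(1+\lambda)\lambda^R\prod_x S_x$ with $S_x:=\sup_\eta a_x$ independent across $x$. Summing over $w$, taking expectations, and unwinding a one-step tree recursion (sum over which of the root's $\Poi(d)$ children continues the path; each of the other children contributes a factor of expectation $s_k:=\mathbb{E}[S^{(k)}]$, where $S^{(k)}$ is the supremum non-occupation probability of a generic $\Poi(d)$ tree truncated at depth $k$) gives
\[
\mathbb{E}\,\bigl|a_\rootvtx^{\mathbf 0}-a_\rootvtx^{\mathbf 1}\bigr|\ \le\ (1+\lambda)\,\lambda^R\prod_{k=0}^{R-1} d\,\e^{-d(1-s_k)}\ =\ (1+\lambda)\prod_{k=0}^{R-1} c\,\e^{-\sigma_k},
\]
where $c=\lambda d=(1-\gamma)\e$ and $\sigma_k:=d(1-s_k)$. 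It now suffices to show $\sigma_k>\ln c+\delta$ for all large $k$ and a fixed $\delta>0$: then all but finitely many factors are bounded by some $\rho<1$, while the remaining ones (including the trivial $k=0$ factor, since $s_0=1$) contribute only a constant, so the right-hand side decays geometrically.

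To control $\sigma_k$, I would track supremum and infimum values together: writing $t_k:=\mathbb{E}[\inf_\eta a_\rootvtx^{(k)}]$, the recurrence \eqref{eq:bp}, the Taylor bounds \eqref{eq:taylor}, and the Poisson identity $\mathbb{E}[x^K]=\e^{-d(1-x)}$ (as computed in Section~\ref{sec:no-WSM-asymp}) give $s_k\le 1-\lambda\e^{-d(1-t_{k-1})}+O(1/d^2)$ and $t_k\ge 1-\lambda\e^{-d(1-s_{k-1})}$. Substituting and rescaling by $d$ yields $\sigma_k\ge(h\circ h)(\sigma_{k-2})-O(1/d)$ where $h(\sigma)=c\,\e^{-\sigma}$. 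Since $c\,\e^{-\sigma}+\sigma$ is minimized at the value $1+\ln c$, we get $\sup_\sigma(h\circ h)'(\sigma)=c/\e<1$, so $h\circ h$ is a monotone global contraction on $(0,\infty)$ whose only fixed point is the unique fixed point $W(c)$ of $h$; comparing $\sigma_k$ with the perturbed iteration $\tilde\sigma_k=(h\circ h)(\tilde\sigma_{k-2})-O(1/d)$ gives $\liminf_k\sigma_k\ge W(c)-O(1/d)$. Finally, $W(c)\e^{W(c)}=c$ gives $W(c)=\ln c-\ln W(c)$, so $W(c)>\ln c$ precisely when $W(c)<1$, \ie, when $c<\e$ --- exactly our regime. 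Thus for $d$ large enough that the $O(1/d)$ error is below $W(c)-\ln c$, we obtain $\sigma_k>\ln c+\delta$ for all large $k$, completing the geometric decay.

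The main obstacle I anticipate is this last step: showing that the expected \emph{worst-case over boundary conditions} non-occupation probability of a random $\Poi(d)$ subtree is $1-\Theta(1/d)$ with the constant strictly above $\ln c$, uniformly in the truncation depth. This is where the supremum/infimum interleaving, the bookkeeping of the $O(1/d^2)$ Taylor remainders, and the identity $W(c)>\ln c\iff c<\e$ must all be assembled correctly. A secondary technical point is justifying that the supremum over the depth-$R$ boundary factorizes across the independent off-path subtrees, so that the first-moment recursion closes in terms of the scalars $s_k$ rather than a full recursion on distributions.
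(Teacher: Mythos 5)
Your proposal is essentially correct, but it takes a genuinely different route from the paper. The paper argues in a quenched, structural way: it calls a vertex good if its depth-$2h$ subtree is $[(1-\eps)d,(1+\eps)d]$-regular, compares such subtrees to alternating $(\Delta_1,\Delta_2)$-trees via the pruning/grafting Lemma~\ref{lem:prune-graft}, uses the contraction Lemma~\ref{lem:d1d2contraction} and the fixed-point location Lemma~\ref{lem:fp-lowerbd} to get $1-a_u\le \e^{-\eps/3}/d$ at every good vertex under any boundary, and then Chernoff and union bounds over all depth-$R$ paths to conclude $|\partial a_\rootvtx/\partial a_w|\le d^{-R}\e^{-\eps R/13}$ simultaneously for all leaves, which beats the $d^R\,\mathrm{poly}(R)$ boundary size. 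You instead run an annealed, first-moment argument: bound each leaf influence by $(1+\lambda)\lambda^R\prod_x S_x$ over the independent off-path subtrees, so that the expected total influence closes into the scalar product $\prod_k c\,\e^{-\sigma_k}$ with $c=\lambda d$ (the size-biasing is correctly captured by $\Exp[K s^{K-1}]=d\,\e^{-d(1-s)}$), and then control $\sigma_k=d(1-s_k)$ by interleaving sup/inf expectations, reducing to the iteration of $h(\sigma)=c\,\e^{-\sigma}$. The key claims there check out: $(h\circ h)'(\sigma)=c^2\e^{-\sigma-c\e^{-\sigma}}\le c/\e<1$ globally, so $h\circ h$ is a contraction whose unique fixed point is $W(c)$; the $\lambda^2=O(1/d^2)$ Taylor remainders and the perturbed-iteration comparison shift the limit only by $O(1/d)$; and $W(c)>\ln c$ exactly when $c<\e$, giving a uniform factor $\rho<1$ per level after a burn-in of constantly many levels (independent of $R$ and $d$); summability of $\Exp|a^{\mathbf 0}_R-a^{\mathbf 1}_R|$ plus Tonelli then gives almost sure convergence, and Proposition~\ref{prop:TFAE-WSM} (statement 4, valid for $\lambda<\infty$) upgrades this to WSM. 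What each buys: your route is shorter and more elementary, avoids the pruning/grafting and union-bound machinery, and exhibits the threshold through the same scalar map $c\,\e^{-\sigma}$ (criterion $W(c)<1$) that drives the non-mixing argument of Section~\ref{sec:no-WSM-asymp}; the paper's route gives a quenched statement with an explicit almost-sure decay rate $\e^{-\gamma^2 R/56}$ valid for all large $R$, plus reusable lemmas about nearly-regular trees. Like the paper's proof, yours exploits the Poisson degree statistics in an essential way, so it likewise says nothing about SSM. The remaining work is exactly where you flag it --- writing out the sup/inf interleaving and the monotone comparison with the perturbed contraction --- but the sketch as given is sound.
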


The proof is fairly involved, and we begin by presenting a summary 
of the main ideas involved.

\begin{proof}[Proof Sketch]
To show WSM we need to show that there is a well defined non-occupation 
probability $a_{\rootvtx}^*$ at the root, \ie, that the sequences 
$a_{\rootvtx, R}^{\mathbf{0}}$ and $a_{\rootvtx, R}^{\mathbf{1}}$ converge to a common 
limit. 
As in Section~\ref{sec:finlam-SSM-lb} we bound 
$|a_{\rootvtx, R}^{\mathbf{0}} -a_{\rootvtx, R}^{\mathbf{1}}|$ by the sum of the 
absolute values of the partial 
derivatives $\partial a_{\rootvtx}/\partial a_w$ where $w$ is a vertex at depth 
$R$. We know that there are almost surely  at most $R^2d^R$ such
vertices, for all sufficiently large $R$. 
The improvement in this argument comes from proving a better upper bound on 
$\prod_v (1-a_v)$ which controls the size of 
$|\partial a_{\rootvtx}/\partial a_w|$. Here, the product is taken over all 
vertices $v$ on the path from $\rootvtx$ to $w$.
The main idea is that when $d$ is very large, most of the vertices on the path
from $\rootvtx$ to $w$ are ``good'' in the sense that they and all their 
descendants to some depth $h$ have degrees very close to $d$. In other
words, each such vertex $v$ is the root of a nearly regular $d$-ary subtree of depth $h$. For large enough $h$, this means that $a_v$ is very close to the fixed point $a^*$ of 
the function $f_d(x)= (1+\lambda x^d)^{-1}$, which exists since
$\lambda$ is less than the regular $d$-ary threshold. Thus for each good
vertex $v$, $(1-a_v) < c/d$ for some small $c<1$ and it only remains
to show that there are almost surely enough good vertices that, for
all sufficiently large $R$,  
the bound $\prod_v (1-a_v)$ for each path to depth $R$ beats the $R^2d^R$ such paths.
\end{proof}

We devote the rest of this section to making the above argument rigorous.

\begin{remark} Unlike the proof in Section~\ref{sec:finlam-SSM-lb}, this proof does 
\emph{not} show strong spatial mixing. Passing to a subtree can destroy the property 
that most vertices have nearly $d$-ary subtrees to some 
depth (or even that they have degree close to $d$). Given the results
in Section~\ref{sec:finlam-SSM-lb}, it is an open question whether SSM
holds with high probability for $\lambda$ between $1/d$ and $\eee/d$.
\end{remark} 

The proof of Theorem~\ref{thm:asymplb-main} rests heavily of the fact that 
most of the vertices in the $\Poi(d)$ tree are roots of subtrees (to some 
depth) that are almost $d$-ary. In order to make precise what we mean by 
``almost $d$-ary'', we will first need some definitions.

\begin{definition} An $(a,b)$-tree is an infinite rooted tree in which every 
vertex at an even depth has $a$ children and every node at an odd depth has 
$b$ children. A truncated $(a,b)$-tree is the truncation of an $(a,b)$-tree to 
some finite depth $R$.
\end{definition}

\begin{definition} Let $0 < \Delta_1 \le \Delta_2$. A rooted tree $T$ is 
$[\Delta_1, \Delta_2]$-regular if the number of children of every vertex is in 
 $[\Delta_1, \Delta_2]$. 
\end{definition}

By an almost $d$-ary tree, we will mean a 
$[(1-\eps)d, (1+\eps)d]$-regular tree. In what follows we will show that such 
a tree behaves like a $d$-ary tree, in that if the tree is sufficiently deep, 
then for almost the same range of $\lambda$ as for the $d$-ary tree, the 
non-occupation probabilities converge to well defined value at the root.

Our next result gives us a way to find a $(\Delta_1, \Delta_2)$ tree 
and a $(\Delta_2, \Delta_1)$ tree ``near'' any $[\Delta_1,
\Delta_2]$-regular tree.  See Figure~\ref{fig:pg-overall} for
illustrations.

\begin{figure}[!ht]
  \centering
  \begin{subfigure}[b]{0.90\textwidth}
    \centering
    \includegraphics[width=0.33\textwidth]{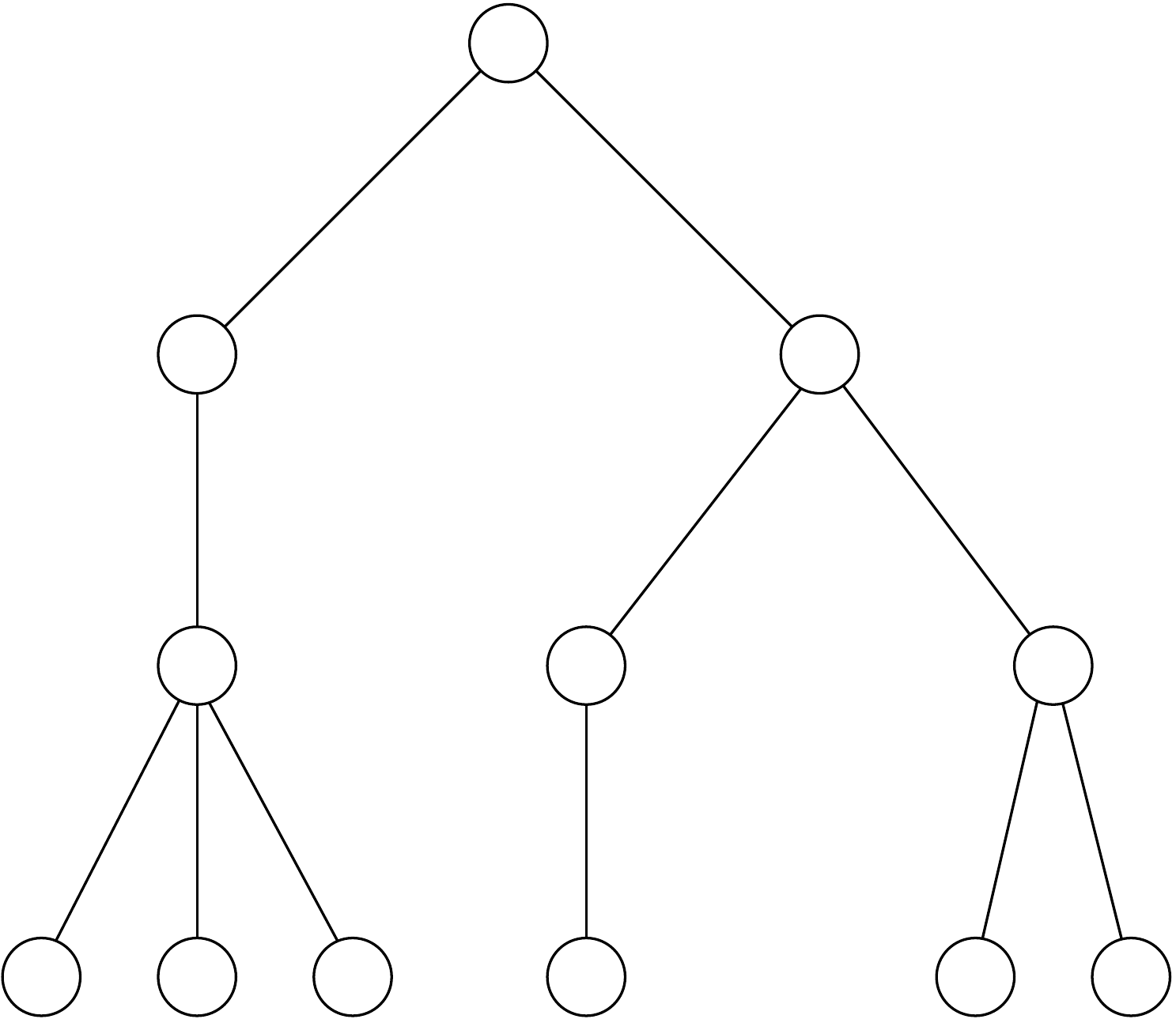}
    \caption{Original tree}
    \label{fig:pg-orig}
  \end{subfigure}
  \begin{subfigure}[b]{0.48\textwidth}
   \centering
  \includegraphics[width=0.9\textwidth]{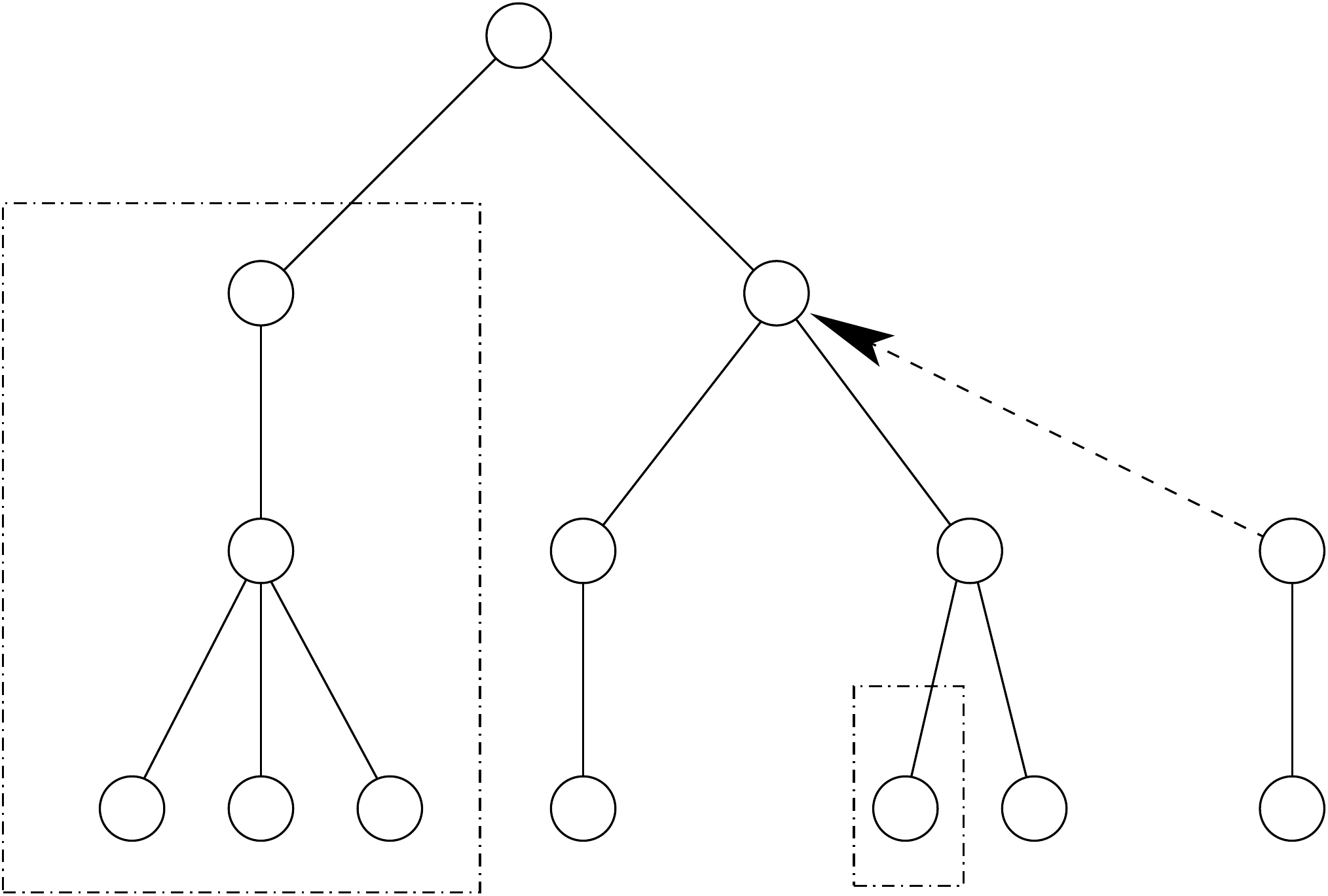}
\caption{Two branches pruned, one grafted.}
  \label{fig:pg-131-during}
  \end{subfigure}
 \begin{subfigure}[b]{0.48\textwidth}
   \centering
  \includegraphics[width= 0.65\textwidth]{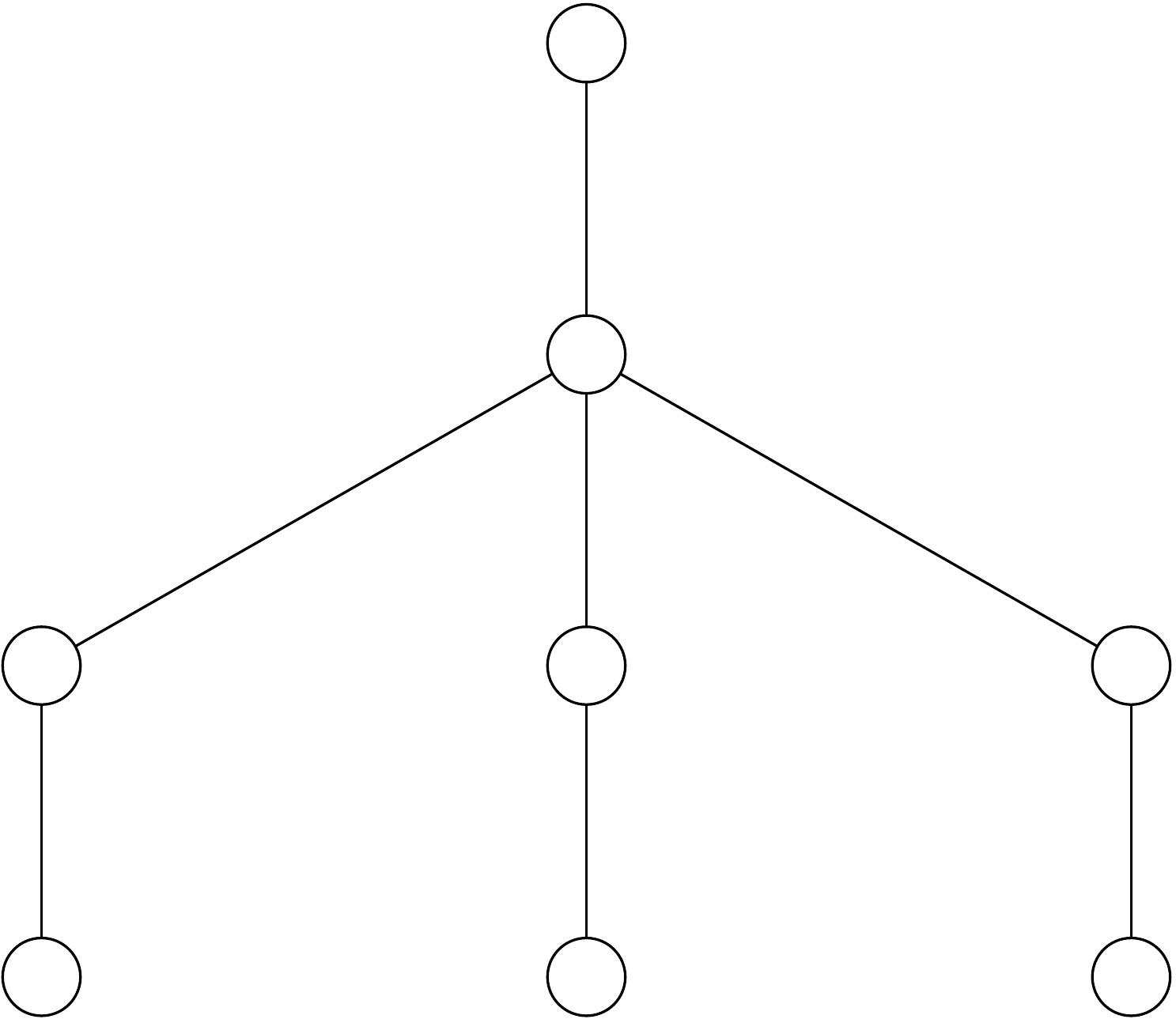}
  \caption{The resulting $(1,3)$-tree.}
  \label{fig:pg-131}
  \end{subfigure} 
 \begin{subfigure}[b]{0.49\textwidth}
    \centering
    \includegraphics[width=0.99\textwidth]{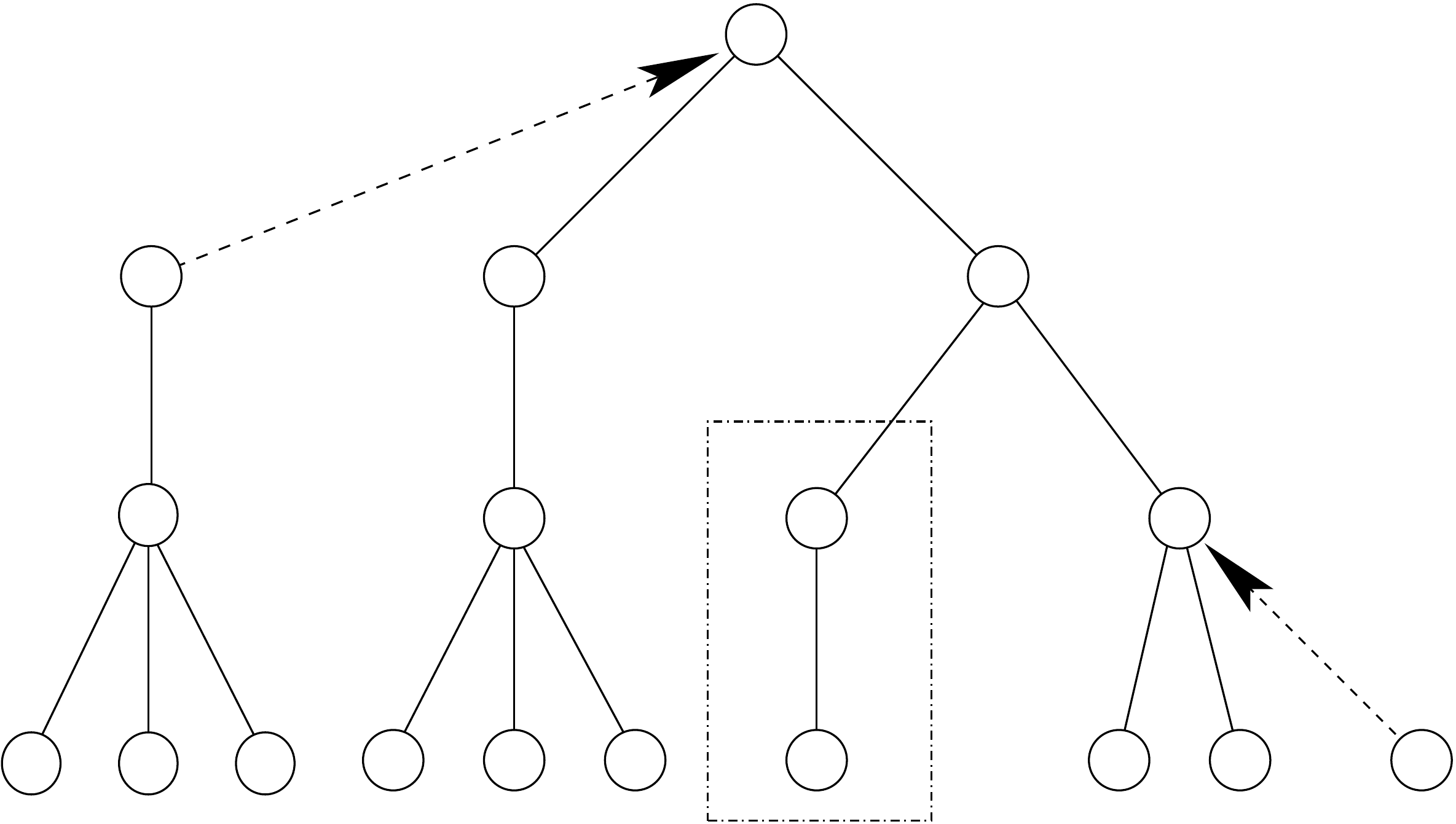}
\caption{Opposite parity of  levels.}
   \label{fig:pg-313-during}
 \end{subfigure}
  \begin{subfigure}[b]{0.49\textwidth}
   \centering
  \includegraphics[width=0.8\textwidth]{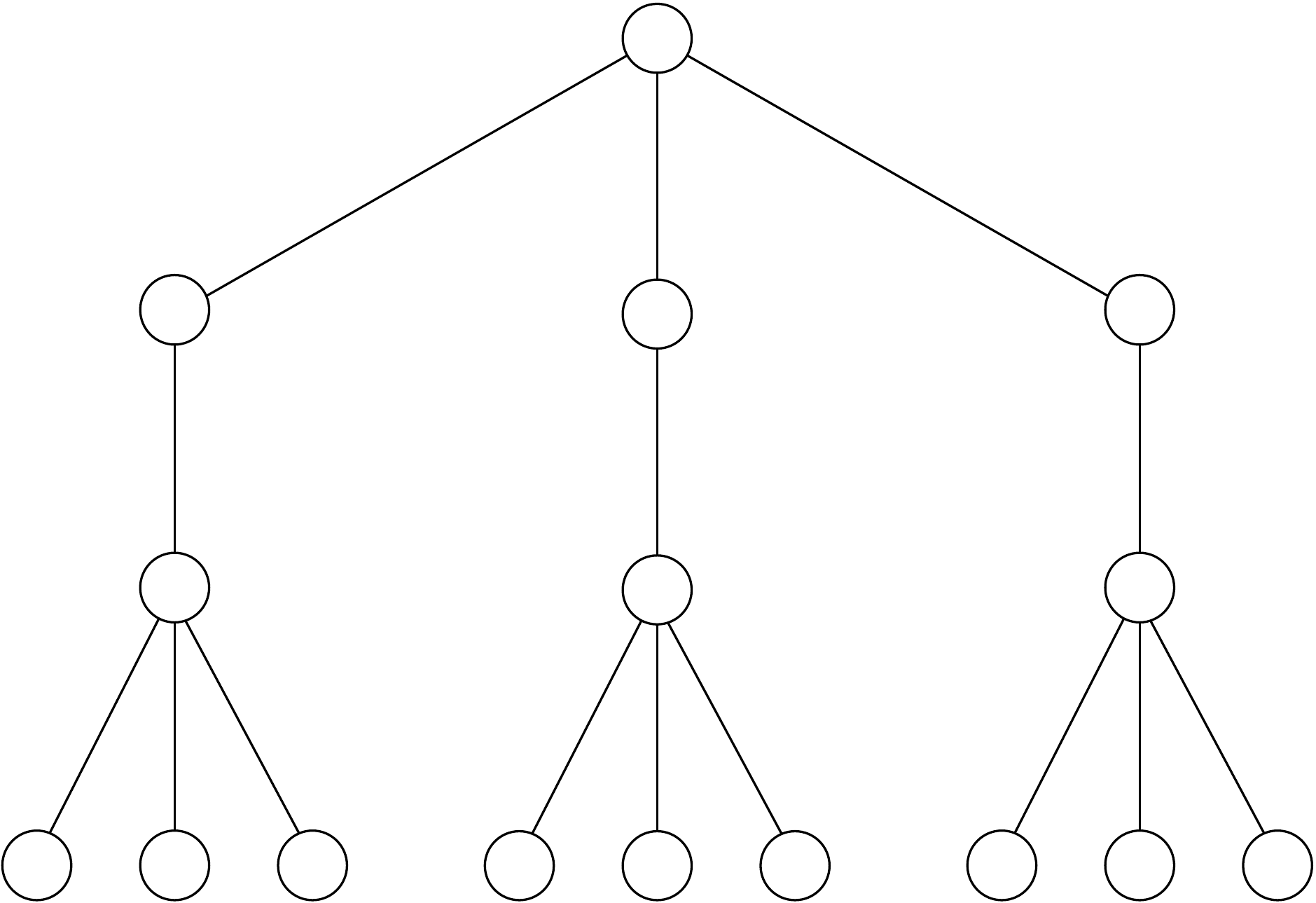}
  \caption{The resulting $(3,1)$-tree.}
  \label{fig:pg-313}
  \end{subfigure} 
 \caption{Applying Lemma~\ref{lem:prune-graft}.
   Old subtrees are pruned and new ones grafted on, on alternating levels.}
 \label{fig:pg-overall}
\end{figure}

\begin{lemma}[Pruning/Grafting] \label{lem:prune-graft} Let $T$ be a $[\Delta_1, \Delta_2]$-regular 
tree with root $v$ and depth $R$. Then 
\begin{enumerate}
\item $T$ can be transformed into a truncated $(\Delta_1, \Delta_2)$-tree $T'$
of depth $R$, rooted at $v$,
by pruning (removing children along with their entire subtrees) at even levels 
and grafting (adding children together with an appropriate subtree) at odd 
levels. 
\item $T$ can be transformed into a truncated $(\Delta_2, \Delta_1)$-tree 
$T''$ of depth $R$,  rooted at $v$, by grafting at even levels and pruning 
at odd levels. 
\end{enumerate}
Let $a_v$, $a_v'$ and $a_v''$ denote the non-occupation probabilities at the 
root in  $T$, $T'$ and $T''$ respectively, when all their leaves are set to 
the same value
$a_0 \in [0,1]$ . Then 
\[
a_v' \le a_v \le a_v''
\]
\end{lemma}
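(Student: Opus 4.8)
The plan is to prove both claims by exhibiting explicit pruning and grafting operations, and then to establish the inequality $a_v' \le a_v \le a_v''$ using the monotonicity of the recurrence \eqref{eq:bp} together with an induction on depth. First I would set up the pruning/grafting construction for part 1. Starting from the root $v$ at level $0$ (an even level), since $T$ is $[\Delta_1,\Delta_2]$-regular, $v$ has some number of children $k \in [\Delta_1,\Delta_2]$; I prune away $k - \Delta_1$ of them (and their entire subtrees) so that $v$ is left with exactly $\Delta_1$ children, which is what a $(\Delta_1,\Delta_2)$-tree requires at even levels. At level $1$ (odd), each surviving vertex currently has between $\Delta_1$ and $\Delta_2$ children; I graft on additional children — each new child being the root of an arbitrary $[\Delta_1,\Delta_2]$-regular subtree of the appropriate remaining depth, with its leaves at depth $R$ — until each level-$1$ vertex has exactly $\Delta_2$ children. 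Continuing alternately (prune at even levels to reach $\Delta_1$, graft at odd levels to reach $\Delta_2$) down to depth $R$ yields the truncated $(\Delta_1,\Delta_2)$-tree $T'$. Part 2 is symmetric: graft at even levels up to $\Delta_2$ and prune at odd levels down to $\Delta_1$, giving $T''$. The one bookkeeping point to check is that grafted subtrees can always be chosen $[\Delta_1,\Delta_2]$-regular of any prescribed depth — which is immediate since one can just take a truncated $\Delta_1$-ary (or $\Delta_2$-ary) tree — so the construction never gets stuck.

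Next I would prove the inequality. The key structural observation is that, with all leaves at depth $R$ fixed to the common value $a_0$, the quantity $a_v$ computed by iterating \eqref{eq:bp} is a function of the tree shape only, and \eqref{eq:bp} is \emph{decreasing} in each $a_w$ and, crucially for subtree modifications, the value $a_w$ at any vertex always lies in $[1/(1+\lambda),1] \subseteq (0,1]$ when it is computed from a nonempty set of children (and equals $1$ when it has no children). I would argue by downward induction on the level, or equivalently by structural induction, tracking the effect of a single prune or graft. Grafting an extra child $w$ onto a vertex $v$ multiplies the product $\prod_w a_w$ inside \eqref{eq:bp} by $a_w \in (0,1]$, hence \emph{decreases} (weakly) that product, hence \emph{increases} $a_v$; pruning a child does the opposite, \emph{increasing} the product and \emph{decreasing} $a_v$. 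So at the root: in forming $T'$ we prune at level $0$ — decreasing $a_v$ — but then below that we have performed grafts at level $1$, prunes at level $2$, etc. The cleanest way to handle the alternation is to propagate the inequality level by level: if we can show that after performing all operations at levels $\ge i$ the value at every level-$i$ vertex moves in a controlled direction, then the sign at the root follows. Concretely, for $T'$ I claim every even-level vertex has its $a$-value $\le$ its value in $T$, and every odd-level vertex has its $a$-value $\ge$ its value in $T$ (and vice versa for $T''$); this is exactly the kind of statement that threads through the induction, because decreasing the children's values at an odd level makes the parent at the even level decrease, consistent with the even-level claim, and the pruning at that even level decreases it further — all pushing the same way — while at odd levels the grafting increases the value and the (increased, since at the level below we are in the "even $\le$" regime... wait, decreased) children values from below... this requires care about which direction the inductive hypothesis supplies.

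Let me restate the inductive scheme more carefully, since getting the alternation right is the main obstacle. Define, for the $T'$ construction, the claim $P_i$: "at every vertex $u$ of level $i$ that survives into $T'$, the value $a'_u$ computed in $T'$ satisfies $a'_u \le a_u$ if $i$ is even and $a'_u \ge a_u$ if $i$ is odd, where $a_u$ is the value computed in $T$ with the same leaf condition." We prove $P_i$ by downward induction from $i = R$ (base case: leaves, where $a'_u = a_0 = a_u$, so both inequalities hold with equality). For the inductive step at an even level $i < R$: a level-$i$ vertex $u$ in $T'$ has exactly $\Delta_1$ children, a subset of its children in $T$; each such child is at odd level $i+1$, and by $P_{i+1}$ has $a'_w \ge a_w$, so $\prod_{w \in \mathrm{ch}_{T'}(u)} a'_w \ge \prod_{w \in \mathrm{ch}_{T'}(u)} a_w \ge \prod_{w \in \mathrm{ch}_{T}(u)} a_w$ (the second inequality because $\mathrm{ch}_{T'}(u) \subseteq \mathrm{ch}_T(u)$ and all factors are in $(0,1]$); plugging into the decreasing map \eqref{eq:bp} gives $a'_u \le a_u$, which is $P_i$. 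For an odd level $i$: a level-$i$ vertex $u$ in $T'$ has its original $T$-children plus some grafted children; by $P_{i+1}$ each original child $w$ has $a'_w \le a_w$, and each grafted subtree contributes a factor in $(0,1]$, so $\prod_{w \in \mathrm{ch}_{T'}(u)} a'_w \le \prod_{w \in \mathrm{ch}_{T}(u)} a'_w \le \prod_{w \in \mathrm{ch}_{T}(u)} a_w$; again by monotonicity of \eqref{eq:bp}, $a'_u \ge a_u$, which is $P_i$. Then $P_0$ gives $a'_v \le a_v$. The $T''$ direction is the mirror image (swap all inequalities and swap the roles of even/odd, grafting/pruning), yielding $a_v \le a''_v$. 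The main obstacle is precisely this: one must make sure that at each level the two effects in play (the set-containment change in the product and the inductive change in the surviving children's values) push $a_u$ in the \emph{same} direction rather than fighting each other — and the alternating prune-at-even/graft-at-odd structure of a $(\Delta_1,\Delta_2)$-tree is exactly what guarantees this, which is why the lemma produces a $(\Delta_1,\Delta_2)$-tree on one side and a $(\Delta_2,\Delta_1)$-tree on the other. A minor point to dispatch along the way: all values $a_u$ with $u$ not a leaf lie in $(0,1]$ so that dropping or adding factors has the claimed monotone effect; this is immediate from \eqref{eq:bp} since $\lambda>0$ and each child value is nonnegative.
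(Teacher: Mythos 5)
Your proof is correct and is essentially the argument the paper intends: the paper's proof is just ``by induction on depth,'' and your downward induction on levels with the parity-dependent inequalities (even levels decrease, odd levels increase for $T'$, mirrored for $T''$), driven by the monotonicity of the recurrence \eqref{eq:bp} and the fact that all values lie in $[0,1]$, fills in exactly those details. The only nitpick is that factors can be $0$ (if $a_0=0$), not just in $(0,1]$, but the drop/add-factor monotonicity still holds for nonnegative factors bounded by $1$, so nothing breaks.
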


\begin{proof} By induction on depth of $T$. 
\end{proof}

Recalling that 
\[
f_d(a) = \frac{1}{1+\lambda a^d} \, , 
\]
we wish to prove, for certain values of $\lambda$, that iterating $f_{\Delta_1} \circ f_{\Delta_2}$ causes $a_v$ to converge to a unique fixed point.  The following two lemmas establish the existence and uniqueness of this fixed point, and bound its location.

\begin{lemma}
\label{lem:d1d2contraction}
Let $\Delta_1, \Delta_2 \ge 2$, 
and let 
\begin{equation}
\label{eq:lambdad1d2}
\lambda(\Delta_1, \Delta_2) 
= \Delta_2^{\Delta_1}  \left( \frac{\Delta_1 + 1}{\Delta_1 \Delta_2 - 1} \right)^{\!\Delta_1 + 1} \, .
\end{equation}
For any $\lambda < \lambda(\Delta_1, \Delta_2)$, there is a unique fixed point $a^*$ such that $(f_{\Delta_1} \circ f_{\Delta_2})(a^*) = a^*$.  Moreover, 
there is a constant $c < 1$ such that 
\[
\abs{ (f_{\Delta_1} \circ f_{\Delta_2})^t (a_0) - a^* } 
\le c^{t-1} \ln (\lambda+1) \, . 
\]
Moreover, 
\[
c \le \frac{1}{2} \left( 1 + \frac{\lambda}{\lambda(\Delta_1,\Delta_2)} \right) \, .
\]
\end{lemma}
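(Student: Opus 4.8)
The plan is to treat $F := f_{\Delta_1} \circ f_{\Delta_2}$ as a self-map of a suitable compact interval and establish that it is a contraction there. First I would record the basic order-structure: $f_\Delta$ is decreasing on $[0,1]$ with range $[\tfrac{1}{1+\lambda},1]$, so after one application every iterate lands in $I := [\tfrac{1}{1+\lambda},1]$, and $F$ maps $I$ into itself and is \emph{increasing} (a composition of two decreasing maps). Since $F$ is continuous and increasing on the compact interval $I$, it has at least one fixed point, and any fixed point lies in $I$; uniqueness and the claimed convergence both follow once we show $\sup_{a \in I} |F'(a)| < 1$. So the whole lemma reduces to a derivative estimate on $I$.

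Next I would compute $F'$ via the chain rule. Writing $b = f_{\Delta_2}(a)$ and using $f_\Delta'(x) = -\lambda \Delta x^{\Delta-1} f_\Delta(x)^2$, one gets, after the same algebra already used in the paper ($f_\Delta'(x) = -\Delta\, x^{-1} f_\Delta(x)(1-f_\Delta(x))$),
\[
F'(a) = \frac{\Delta_1 \Delta_2}{a}\,(1 - f_{\Delta_2}(a))\,\bigl(1 - F(a)\bigr)\,\frac{F(a)}{f_{\Delta_2}(a)} \, .
\]
The bound to prove is then that the right-hand side, in absolute value, is at most the stated $c$. I would maximize the relevant factors over $I$: all of $a$, $f_{\Delta_2}(a)$, $F(a)$ lie in $I$, and the products $(1-f_{\Delta_2}(a))$, $(1-F(a))$ are small when $\lambda$ is not too large. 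The role of the explicit threshold $\lambda(\Delta_1,\Delta_2) = \Delta_2^{\Delta_1}\bigl(\tfrac{\Delta_1+1}{\Delta_1\Delta_2-1}\bigr)^{\Delta_1+1}$ is exactly the analogue of Weitz's regular-tree threshold for the two-step map $f_{\Delta_1}\circ f_{\Delta_2}$: at $\lambda = \lambda(\Delta_1,\Delta_2)$ the fixed point of $F$ becomes neutrally stable, i.e. $|F'(a^*)| = 1$ there. I would verify this by substituting the fixed-point relations $a^* = f_{\Delta_1}(b^*)$, $b^* = f_{\Delta_2}(a^*)$ into the derivative formula and checking $|F'(a^*)| = 1 \iff \lambda = \lambda(\Delta_1,\Delta_2)$ by a direct (if tedious) calculation; monotonicity of $|F'|$ in $\lambda$ along the fixed-point curve then upgrades neutral stability into the uniform contraction bound for $\lambda < \lambda(\Delta_1,\Delta_2)$, and tracking constants gives the linear interpolation $c \le \tfrac12\bigl(1 + \lambda/\lambda(\Delta_1,\Delta_2)\bigr)$. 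The geometric-convergence statement $|F^t(a_0) - a^*| \le c^{t-1}\ln(\lambda+1)$ then follows from the Banach fixed point iteration, using that after the first application $|F(a_0) - a^*| \le \mathrm{diam}(I) = 1 - \tfrac{1}{1+\lambda} = \tfrac{\lambda}{1+\lambda} \le \ln(\lambda+1)$.

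The main obstacle I anticipate is the sup-norm derivative estimate itself: one has to show $|F'(a)| \le c < 1$ for \emph{all} $a \in I$, not merely at the fixed point, and naively bounding each factor of $F'$ by its worst case over $I$ loses too much (in particular $a^{-1}$ can be as large as $1+\lambda$). The fix is to exploit the correlation between the factors — when $a$ is near $\tfrac{1}{1+\lambda}$ so that $a^{-1}$ is large, $f_{\Delta_2}(a)$ is near $1$ so $(1-f_{\Delta_2}(a))$ is small, and similarly $F(a)$ is pinned near a value forcing $(1-F(a))$ small — so the product is controlled. Concretely I would substitute $F(a) = f_{\Delta_1}(f_{\Delta_2}(a))$ explicitly and reduce to a single-variable optimization over $b = f_{\Delta_2}(a) \in I$, show the resulting function of $b$ is maximized either at an interior critical point (which reproduces the fixed-point computation) or at the endpoint $b = 1$ (an easy case), and conclude. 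This is the step that genuinely uses the precise form of $\lambda(\Delta_1,\Delta_2)$ rather than a crude bound.
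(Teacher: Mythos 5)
There is a genuine gap at the anchor of your argument: you claim that $\lambda(\Delta_1,\Delta_2)$ is exactly the value at which the fixed point of $F=f_{\Delta_1}\circ f_{\Delta_2}$ becomes neutrally stable, and you plan to verify $|F'(a^*)|=1\iff\lambda=\lambda(\Delta_1,\Delta_2)$ by direct computation. That identity is false in general. At a fixed point, $|F'(a^*)|=\Delta_1\Delta_2(1-a^*)(1-b^*)$ with $b^*=f_{\Delta_2}(a^*)$, so the true neutral-stability threshold is symmetric under swapping $\Delta_1\leftrightarrow\Delta_2$, whereas the paper's $\lambda(\Delta_1,\Delta_2)=\Delta_2^{\Delta_1}\bigl(\frac{\Delta_1+1}{\Delta_1\Delta_2-1}\bigr)^{\Delta_1+1}$ is not (e.g.\ $\lambda(2,3)\approx 1.94$ while $\lambda(3,2)\approx 3.28$; numerically, at $\lambda=\lambda(2,3)$ one finds $|F'(a^*)|\approx 0.38$, nowhere near $1$). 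The paper's quantity is a conservative sufficient threshold, not the bifurcation point, so your strategy of ``neutral stability at $\lambda(\Delta_1,\Delta_2)$ plus monotonicity in $\lambda$'' cannot produce the stated constant, and in any case stability of the fixed point does not yield the uniform bound $\sup_{a\in I}|F'(a)|\le c<1$ that your Banach-iteration argument needs. You acknowledge this last point, but your proposed fix -- that the maximizer of $|F'|$ over $I$ is either an interior critical point ``which reproduces the fixed-point computation'' or the endpoint $b=1$ -- is asserted, not proved; critical points of $|F'|$ (zeros of $F''$) have no reason to coincide with fixed points of $F$, and this two-variable coupled optimization is precisely the hard step you have not carried out. (Also, your derivative formula has an algebra slip: since $f_\Delta'(x)=-\frac{\Delta}{x}f_\Delta(x)(1-f_\Delta(x))$, the factor $f_{\Delta_2}(a)$ cancels and $F'(a)=\frac{\Delta_1\Delta_2}{a}F(a)(1-F(a))(1-f_{\Delta_2}(a))$, without the extra $1/f_{\Delta_2}(a)$; with your formula the ``threshold computation'' would come out wrong even in the symmetric case. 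And the final bound $c\le\frac12\bigl(1+\lambda/\lambda(\Delta_1,\Delta_2)\bigr)$ is left as ``tracking constants'' with no mechanism.)

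For comparison, the paper avoids the coupled optimization entirely: it passes to $y=\ln a$, where $G=g_{\Delta_1}\circ g_{\Delta_2}$ has derivative $\Delta_1\Delta_2(1-F)(1-b)$ (no awkward $F/a$ factor), then inverts the problem -- for each target contraction constant $c$ it solves for the largest fugacity $\lambda_c$ such that the derivative is at most $c$ for \emph{all} $y\le 0$, using a change of variables that decouples the two arguments and reduces the bound to a clean one-variable minimization with the closed form $\lambda_c=c\,\Delta_2^{\Delta_1}\bigl(\frac{\Delta_1+1}{\Delta_1\Delta_2-c}\bigr)^{\Delta_1+1}$; the threshold $\lambda(\Delta_1,\Delta_2)$ is by definition $\lambda_1$, and the bound $c\le\frac12\bigl(1+\lambda/\lambda(\Delta_1,\Delta_2)\bigr)$ comes from convexity of $c\mapsto\lambda_c$ together with $\frac{d\lambda_c}{dc}\big|_{c=1}\le 2\lambda(\Delta_1,\Delta_2)$. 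If you want to salvage a direct argument in the $a$-variable you would need to prove the uniform derivative bound on $I$ from scratch and would still have to explain why the specific constant $\lambda(\Delta_1,\Delta_2)$ and the linear bound on $c$ emerge; as written, the proposal does not establish the lemma.
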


\begin{proof}
We will begin by changing variables.  First define $y=\ln a$, in which case $y \in (-\infty,0]$ and 
\[
g_d(y) = - \ln (1+\lambda \e^{dy}) \, .
\]
Note that $g_{\Delta_1} \circ g_{\Delta_2}$ is monotonically increasing.  We will show that, for any $\lambda < \lambda(\Delta_1, \Delta_2)$, there is a constant $c < 1$ such that
\begin{equation}
\label{eq:contract}
\ddy \,g_{\Delta_1} (g_{\Delta_2} (y)) 
= g'_{\Delta_1}(g_{\Delta_2} (y)) \,g'_{\Delta_2}(y) 
\le c 
\text{ for all }
y \le 0 \, . 
\end{equation}  
This implies that the fixed point $y^*= (g_{\Delta_1} \circ g_{\Delta_2})(y^*) = \ln a^*$ is unique, and that we approach it exponentially quickly as we iterate $g_{\Delta_1} \circ g_{\Delta_2}$.  Rather than finding $c$ as a function of $\lambda$, it is analytically simpler to find a $\lambda$ such that~\eqref{eq:contract} holds for a given $c$, and then showing that this $\lambda$ coincides with $\lambda(\Delta_1,\Delta_2)$ when $c=1$.

It is convenient to do one more change of variables, from $y$ to $g^{-1}_{\Delta_2}(y)$ (which is well-defined since $g_d$ is monotonic).  Thus we can focus on
\[
h(y) 
= g'_{\Delta_1}(y) \,g'_{\Delta_2}(g^{-1}_{\Delta_2}(y)) 
= \Delta_1 \Delta_2 \,\e^{\Delta_1 y} (1-\e^y) \,\frac{\lambda}{1+\lambda \,\e^{\Delta_1 y}}
\]
We will find a $\lambda$ such that $h(y) \le c$ for all $y \le 0$.  For any fixed $y$, $h(y)$ is a monotonically increasing function of $\lambda$.  
Moreover, we can find the $\lambda$ where $h(y)=c$, namely 
\[
\lambda_c(y) = \frac{c \e^{-\Delta_1 y}}{\Delta_1 \Delta_2 (1-\e^y)-c} \, , 
\]
where we note that if $\Delta_1 \Delta_2 (1-\e^y) < c$ then $h(y) < c$ for all $\lambda > 0$.  Taking derivatives, we find that $\lambda_c(y)$ is minimized at 
\[
y_{\min} = \ln \frac{\Delta_1 \Delta_2-c}{(1 + \Delta_1) \Delta_2} \, ,
\]
where
\begin{equation}\label{eq:lamc}
\lambda_c 
= \lambda_c(y_{\min}) 
= c \Delta_2^{\Delta_1} \left( \frac{\Delta_1+1}{\Delta_1 \Delta_2 - c} \right)^{\Delta_1+1} \, . 
\end{equation}
Thus if $\lambda \le \lambda_c$, we have $h(y) \le c$ for all $y \le 0$.  

Now note that $\lambda_c$ is a strictly increasing function of $c$, and that it ranges from $0$ to $\lambda(\Delta_1,\Delta_2)$ as $c$ goes from $0$ to $1$.  Thus for any $0 \le \lambda < \lambda(\Delta_1,\Delta_2)$ there is a $c = c(\lambda)  < 1$ such that $\lambda = \lambda_c$, and~\eqref{eq:contract} holds.  
Specifically, an easy calculation shows that $\mathrm{d}^2 \lambda_c / \dc^2 \ge 0$ for $0 < c < 1$, and that 
\[
\left. \frac{1}{\lambda(\Delta_1,\Delta_2)} \frac{\dlambda}{\dc} \right|_{c=1} = \frac{\Delta_1(\Delta_2+1)}{\Delta_1 \Delta_2 - 1} \le 2 \, . 
\]
(Indeed, this derivative is $1+O(1/\Delta_2)$.)  Therefore, 
\[
\lambda_c \ge \lambda(\Delta_1,\Delta_2) \big( 1-2(1-c) \big) \, ,
\]
and so
\[
c \le \frac{1}{2} \left( 1 + \frac{\lambda}{\lambda(\Delta_1,\Delta_2)} \right) \, .
\]

To complete the proof, each time we iterate $g_{\Delta_1} \circ g_{\Delta_2}$, any interval shrinks by a factor of $c$.  Since $g_{\Delta_1} \circ g_{\Delta_2}$ maps $(-\infty,0]$ into $(-\ln (\lambda+1),0]$, the width of any interval after $t$ iterations is at most $c^{t-1} \ln (\lambda+1)$.  The same bound holds when we change variables back to $a = \e^y$, since $\mathrm{d} \e^y / \mathrm{d}y \le 1$ for all $y \le 0$. 
\end{proof}

Note that when $\Delta_1 = \Delta_2 = \Delta$, the  value of $\lambda$ defined in Lemma~\ref{lem:d1d2contraction} becomes the known value for the $\Delta$-regular tree,
\[
\lambda(\Delta, \Delta) = \Delta^\Delta  \left( \frac{\Delta + 1}{\Delta^2 - 1} \right)^{\!\Delta + 1} = \frac{\Delta^\Delta}{(\Delta-1)^{\Delta+1}} \, .
\]
We will also use the following lower bound, 
\begin{equation}
\label{eq:lambdad1d2simple}
\lambda(\Delta_1, \Delta_2) 
\ge \Delta_2^{\Delta_1}  \left( \frac{\Delta_1 + 1}{\Delta_1 \Delta_2} \right)^{\!\Delta_1 + 1} 
= \frac{1}{\Delta_2} \left(1+\frac{1}{\Delta_1} \right)^{\!\Delta_1+1} 
\ge \frac{\e}{\Delta_2} \, .
\end{equation}

\begin{lemma}\label{lem:fp-lowerbd}
Let $\gamma \in (0,1)$ and let $\lambda = \frac{(1-\gamma)\e}{d}$.  
Let $\eps = \gamma^2/4$.  There is a constant 
$d_0=d_0(\gamma)$  such that 
for all $d > d_0$, the fixed point $a^*$ 
of $f_{(1-\eps)d}\circ f_{(1+\eps)d}$
is at least $1-\frac{1}{(1+\eps)d}$.
\end{lemma}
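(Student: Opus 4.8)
The plan is to exploit the monotonicity of the recurrence together with Lemma~\ref{lem:d1d2contraction}. Write $\Delta_1 = (1-\eps)d$, $\Delta_2 = (1+\eps)d$, and $G = f_{\Delta_1}\circ f_{\Delta_2}$, so that $a^*$ is by definition the fixed point of $G$. Since $G$ is a composition of two decreasing maps it is monotonically increasing, and since $(1-\gamma)(1+\eps)<1$ for $\eps=\gamma^2/4$, the lower bound~\eqref{eq:lambdad1d2simple} gives $\lambda < \e/\Delta_2 \le \lambda(\Delta_1,\Delta_2)$; hence Lemma~\ref{lem:d1d2contraction} applies and tells us $a^*$ is the \emph{unique} fixed point of $G$, with $G^t(a_0)\to a^*$ for every $a_0\in[0,1]$. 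Consequently it suffices to exhibit a point $x_0$ with $G(x_0)\ge x_0$: by monotonicity the iterates $G^t(x_0)$ are then nondecreasing, and by the lemma they converge to $a^*$, so $a^*\ge x_0$. I would take $x_0 = 1-\tfrac{1}{(1+\eps)d}=1-\tfrac{1}{\Delta_2}$, exactly the quantity in the statement.

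Next I would estimate $G(x_0)$ for large $d$. Since $x_0^{\Delta_2}=(1-\tfrac1{\Delta_2})^{\Delta_2}$ lies between $\e^{-1}(1-\tfrac1{\Delta_2})$ and $\e^{-1}$, and $\lambda=(1-\gamma)\e/d$, one gets $y_0 := f_{\Delta_2}(x_0) = \bigl(1+\lambda x_0^{\Delta_2}\bigr)^{-1} = 1-\tfrac{1-\gamma}{d}+O(1/d^2)$, hence $\ln y_0 \le -\tfrac{1-\gamma}{d}+O(1/d^2)$; raising to the power $\Delta_1=(1-\eps)d$ gives $y_0^{\Delta_1}\le \e^{-(1-\eps)(1-\gamma)}(1+O(1/d))$. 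Therefore
\[
G(x_0)=f_{\Delta_1}(y_0)=\frac{1}{1+\lambda y_0^{\Delta_1}}\ \ge\ 1-\lambda y_0^{\Delta_1}\ \ge\ 1-\frac{(1-\gamma)\,\e^{\,1-(1-\eps)(1-\gamma)}}{d}\,(1+O(1/d)).
\]
Comparing with $x_0 = 1-\tfrac1{(1+\eps)d}$, the desired inequality $G(x_0)\ge x_0$ reduces, for $d\ge d_0(\gamma)$, to the clean strict inequality
\[
(1-\gamma)(1+\eps)\,\e^{\,1-(1-\eps)(1-\gamma)}<1\qquad\text{for all }\gamma\in(0,1),\ \eps=\tfrac{\gamma^2}{4}.
\]

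The only genuinely delicate part is this last elementary inequality: its slack shrinks like $\tfrac12\gamma^3$ as $\gamma\to0$, which is also why $d_0(\gamma)$ must blow up as $\gamma\to 0$ (the $O(1/d)$ corrections above have to be driven below that slack). I would prove it by setting $u = 1-(1-\eps)(1-\gamma)=\gamma+\tfrac{\gamma^2}{4}-\tfrac{\gamma^3}{4}$, using the identity $(1-\gamma)(1+\eps)=1-u+\tfrac12\gamma^2(1-\gamma)$, so that the left-hand side equals $(1-u)\e^u+\tfrac12\gamma^2(1-\gamma)\e^u$; then by $(1-u)\e^u\le 1-\tfrac{u^2}{2}$ for $u\ge0$ it suffices to check $\gamma^2(1-\gamma)\e^u<u^2$, equivalently $(1-\gamma)\e^u<(u/\gamma)^2=\bigl(1+\tfrac{\gamma(1-\gamma)}{4}\bigr)^2$. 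Since $u\le \gamma+\tfrac{\gamma^2}{4}$ and $(1-\gamma)\e^\gamma\le 1-\tfrac{\gamma^2}{2}$, the left side is at most $\bigl(1-\tfrac{\gamma^2}{2}\bigr)\e^{\gamma^2/4}$, which a one-variable monotonicity check shows is $<1$ on $(0,1)$, while the right side is $\ge 1$; this closes the chain. The remaining step is purely bookkeeping: fix $d_0(\gamma)$ large enough that the $O(1/d)$ error terms do not overturn the strict inequality.
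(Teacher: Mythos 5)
Your proposal is correct and follows essentially the same route as the paper: invoke Lemma~\ref{lem:d1d2contraction} (via the bound \eqref{eq:lambdad1d2simple}) to get a unique attracting fixed point, then show the composed map sends the candidate point $1-\tfrac{1}{(1+\eps)d}$ upward, reducing everything to an elementary inequality in $\gamma$ that holds with room to spare for $d\ge d_0(\gamma)$. The only differences are cosmetic: the paper runs the computation in the variable $y=\ln a$ and carries the $\tfrac{1-\gamma}{2d}$ correction explicitly (choosing $d_0$ so that $\tfrac{1-\gamma}{2d_0}<\gamma^3/3$), whereas you work directly in $a$ and absorb the $O(1/d)$ terms against the strict limiting inequality $(1-\gamma)(1+\eps)\,\e^{\,1-(1-\eps)(1-\gamma)}<1$, which your chain of estimates verifies correctly.
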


\begin{proof}
As before, we change variables to $y = \ln a$, and consider the fixed point of $g_{(1-\eps)d} \circ g_{(1+\eps)d}$ where $g_d(y) = - \ln (1+\lambda \e^{dy})$.  First, we show the conditions of Lemma~\ref{lem:d1d2contraction} are met.  Recall the definition of $\lambda(\Delta_1,\Delta_2)$ from~\eqref{eq:lambdad1d2}. 
Since $\eps = \gamma^2/4 <\gamma$ we have
\[
\lambda = \frac{(1-\gamma)\e}{d}  \le \frac{(1-\eps)\e}{d} \le \frac{\e}{(1+\eps)d}\le \lambda\big( (1-\eps)d, (1+\eps)d \big) 
\]
where the last inequality follows from~\eqref{eq:lambdad1d2simple}.

Now that we know that $g_{(1-\eps)d} \circ g_{(1+\eps)d}$ has a unique fixed point,
it suffices to show that for $y=\frac{-1}{(1+\eps)d}$ 
\begin{equation}
\label{eq:wishful}
g_{(1-\eps)d} \left( g_{(1+\eps)d}\left(y\right) \right) \ge y \, .
\end{equation}
In that case, the fixed point $a^*$ is at least $\e^{y} \ge 1+y =
1-\frac{1}{(1+\eps)d}$. 
Since
\[
-x \le - \ln (1+x) \le -x \left( 1-\frac{x}{2} \right) \, , 
\]
whenever $x>0$, 
we have
\[
 g_{(1+\eps)d}\left(y\right) = -\ln(1+\lambda \e^{(1+\eps)dy}) = -\ln\left(1+{\lambda/\e}\right) \le -(\lambda/\e)(1-\lambda/2\e)
\]
and for any $z$,
\[
g_{(1-\eps)d}(z) = -\ln(1+\lambda\e^{(1-\eps)dz}) \ge -\lambda\e^{(1-\eps)dz}\,.
\]
Since $g_{(1-\eps)d}$ is monotonically decreasing, recalling $\lambda = \frac{(1-\gamma)\e}{d}$, we have
\begin{align*}
g_{(1-\eps)d} \left( g_{(1+\eps)d}\left(\frac{-1}{(1+\eps)d}\right) \right) &\ge
g_{(1-\eps)d} \left( -(\lambda/\e)(1-\lambda/2\e)  \right)\\
&\ge -\lambda \e^{-d(1-\eps) (\lambda/\e)(1-\lambda/2\e)} \\
&= \frac{(\gamma-1)}{d} \e^{1-(1-\eps)(1-\gamma)\left(1- \frac{1-\gamma}{2d}\right)} \, .
\end{align*}
Thus to prove~\eqref{eq:wishful} it suffices to show that 
\[
\frac{1}{1+\eps} \ge (1-\gamma) \e^{1-(1-\eps)(1-\gamma)\left(1- \frac{1-\gamma}{2d}\right)}
\]
or equivalently, setting $\eps = \gamma^2/4$,
\begin{equation}
\label{eq:wishful3}
- \ln(1+\gamma^2/4) \ge \ln(1-\gamma) +1 -(1-\gamma^2/4)(1-\gamma)\left(1- \frac{1-\gamma}{2d}\right) 
\end{equation}

We choose $d_0$ such that 
$ \frac{1-\gamma}{2d_0} < \gamma^3/3$. 
Then, recalling that $\ln(1-\gamma) = -\sum_i \gamma^i/i$,  for all $d\ge d_0$,  we have
\begin{align*}
1 + \ln(1-\gamma) + \ln(1+\gamma^2/4) &\le 1 -\sum_{i}\frac{\gamma^i}{i} +\frac{\gamma^2}{4}\\
&\le 1 -\gamma -\frac{\gamma^2}{4} -\frac{\gamma^3}{3}\\ 
&\le 1 -\gamma -\frac{\gamma^2}{4}  -\frac{1-\gamma}{2d} \\
&\le (1-\gamma^2/4)(1-\gamma)\left(1-\frac{1-\gamma}{2d}\right)
\end{align*}
which implies \eqref{eq:wishful3}.
\end{proof}

Let $a_{v, R}^{\mathbf{0}}$ and $a_{v, R}^{\mathbf{1}}$ denote the non-occupation probabilities at the root of the $\Poi(d)$ tree with activity $\lambda = \frac{(1-\gamma)\e}{d}$ when the vertices at depth $R$ are all occupied or all unoccupied respectively. 

We are now ready to prove
\begin{theorem}\label{thm:asymplb}
For all $\gamma \in (0, 1)$, for all sufficiently large $d$,
for all $\delta \in (0,1)$, 
there exists $R_0$ such that
\[
\Pr \left( (\forall R \ge R_0)
|a_{v, R}^{\mathbf{0}} -a_{v, R}^{\mathbf{1}}| \le
\e^{-\gamma^2R/56} \right) \ge 1 - \delta .
\]  
\end{theorem}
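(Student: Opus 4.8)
The plan is to carry out the program sketched for Theorem~\ref{thm:asymplb-main}. By the mean-value computation behind \eqref{eq:av0av1bd}--\eqref{eq:leafpartial}, for either of the two extreme boundary conditions at depth $R$,
\[
\abs{a_{v,R}^{\mathbf{0}}-a_{v,R}^{\mathbf{1}}}\ \le\ (1+\lambda)\,I_R,\qquad I_R:=\sum_{w\in\bd T_R}\ \prod_{i=0}^{R-1}\bigl(1-a_{v_i(w)}\bigr),
\]
where $\rootvtx=v_0(w),\dots,v_R(w)=w$ is the path to $w$ and the $a_{v_i}$ lie in $[\tfrac1{1+\lambda},1]$. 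I would prove the theorem by showing $\Exp[I_R]$ is exponentially small in $R$, then applying Markov's inequality together with a union bound over $R\ge R_0$. The improvement over Section~\ref{sec:finlam-SSM-lb} comes from a second, much stronger bound on $1-a_{v_i}$ at \emph{most} vertices of the path.

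\emph{Step 1 (a per-vertex bound).} Set $\eps=\gamma^2/4$ and fix a large even constant $h=h(\gamma)$. Call $v_i$ \emph{good} if $i\le R-h$ and the truncation to depth $h$ of the subtree rooted at $v_i$ is $[(1-\eps)d,(1+\eps)d]$-regular. At every vertex $1-a_{v_i}\le \lambda/(1+\lambda)<\lambda=(1-\gamma)\e/d$, since $a_{v_i}\ge 1/(1+\lambda)$. At a \emph{good} vertex I claim $1-a_{v_i}\le c/d$ for some constant $c=c(\gamma)<1$: by monotonicity of \eqref{eq:bp} and Lemma~\ref{lem:prune-graft}, $a_{v_i}$ is at least $\min_{x\in\{0,1\}}(f_{(1-\eps)d}\circ f_{(1+\eps)d})^{h/2}(x)$; since \eqref{eq:lambdad1d2simple} gives $\lambda\le \e/((1+\eps)d)\le\lambda\bigl((1-\eps)d,(1+\eps)d\bigr)$, Lemma~\ref{lem:d1d2contraction} (with contraction factor $c_0\le 1-\gamma/4$) and Lemma~\ref{lem:fp-lowerbd} show this is $\ge 1-\tfrac1{(1+\eps)d}-c_0^{h/2-1}\ln(\lambda+1)$. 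Choosing $h$ large enough (depending on $\gamma$) that $c_0^{h/2-1}\ln(\lambda+1)\le \gamma^2/(20d)$, and using $\tfrac1{1+\eps}\le 1-\tfrac{3\gamma^2}{16}$, one gets $1-a_{v_i}\le \tfrac1d\bigl(1-\tfrac{11\gamma^2}{80}\bigr)$, so $c=1-\tfrac{11\gamma^2}{80}$ works.

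\emph{Step 2 (the first moment).} Writing $b(w)$ for the number of bad vertices among $v_0(w),\dots,v_{R-1}(w)$, the two bounds give $\prod_i(1-a_{v_i(w)})\le d^{-R}c^{\,R-b(w)}\e^{\,b(w)}$. The point is that the $d^{-R}$ is cancelled by the expected number $d^R$ of depth-$R$ leaves: by linearity of expectation and the spinal decomposition of the $\Poi(d)$ tree --- conditioned on a root-to-depth-$R$ path surviving, its offspring counts are i.i.d.\ $1+\Poi(d)$ and the subtrees off it are independent $\Poi(d)$-trees --- one obtains $\Exp[I_R]=\Exp[c^{\,R-B}\e^{\,B}]=c^R\,\Exp[(\e/c)^{B}]$, where $B$ counts the bad vertices among the first $R$ spine vertices. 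Since ``$v_i$ is bad'' depends only on offspring counts within distance $h$ below $v_i$, partitioning the indices by residue mod $h$ writes $B$ as a sum of $h$ independent families of Bernoulli variables of success probability at most $q$, where a Poisson tail bound gives $q\le(\text{poly in }d^{h})\cdot\Pr(\abs{\Poi(d)-d}>\eps d)\le(\text{poly in }d^{h})\cdot 2\e^{-\eps^2 d/3}$. For $d$ large relative to $\gamma$ this $q$ is exponentially small, so Hölder across the residue classes gives $\Exp[(\e/c)^{B}]\le\e^{\gamma^2 R/100}$, whence $\Exp[I_R]\le c^R\e^{\gamma^2 R/100}\le\e^{-\gamma^2 R/8}$ and $\Exp\bigl[\abs{a_{v,R}^{\mathbf{0}}-a_{v,R}^{\mathbf{1}}}\bigr]\le 2\,\e^{-\gamma^2 R/8}$ for $d$ large.

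\emph{Step 3 (finishing) and the main obstacle.} By Markov, $\Pr\bigl(\abs{a_{v,R}^{\mathbf{0}}-a_{v,R}^{\mathbf{1}}}>\e^{-\gamma^2 R/56}\bigr)\le 2\,\e^{-\gamma^2 R(1/8-1/56)}=2\,\e^{-3\gamma^2 R/28}$, which is summable; choosing $R_0=R_0(\gamma,\delta)$ so that the tail sum from $R_0$ is at most $\delta$, and taking a union bound over $R\ge R_0$, finishes the proof. (The exponent $56$ is chosen with slack over the $\approx 8$ the argument yields; Proposition~\ref{prop:R2dR} is not needed in this route, though it gives an alternative in which one bounds $\abs{\bd T_R}\le R^2 d^R$ deterministically and then bounds the expected number of ``bad-heavy'' leaves.) The main obstacle is Step~2: a naive union bound over the $\approx d^R$ root-to-depth-$R$ paths fails, because a fixed path has few bad vertices only with probability $1-\e^{-\Theta(\gamma^2 R/h)}$, which does not beat $d^R$. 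The fix is to push expectations through the branching structure so that the $d^R$ leaf count cancels the $d^{-R}$ coming from $1-a_{v_i}=O(1/d)$, while handling the \emph{dependent}, overlapping ``good'' events (via the modular partition) and the size-biasing that arises upon conditioning on a surviving path. It is essential that $1-a_{v_i}<\lambda=O(1/d)$ even at bad vertices, so that only a small \emph{constant} fraction of path vertices need be good; this is also why $d$ need only be large as a function of $\gamma$, since the bad-vertex probability $q$ is exponentially small in $d$, overwhelming the $d^{h}$ and constant factors.
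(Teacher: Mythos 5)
Your argument is correct in outline, but it takes a genuinely different route from the paper's. You bound the \emph{first moment} of the total influence $I_R$ via a many-to-one (size-biased spine) identity for the $\Poi(d)$ tree, so that the $d^R$ expected leaf count cancels the $d^{-R}$ coming from $1-a_{v_i}=O(1/d)$, handle the overlapping ``good'' events by the mod-$h$ partition plus H\"older, and finish with Markov and a sum over $R\ge R_0$. The paper instead works with high-probability events: it defines goodness via depth-$2h$ almost-regular subtrees (its Lemma~\ref{lem:good}, which you essentially reprove from Lemmas~\ref{lem:prune-graft}, \ref{lem:d1d2contraction} and \ref{lem:fp-lowerbd}), applies a Chernoff bound to the number of bad vertices in each residue class of a \emph{fixed} path, and then does exactly the union bound over all $\approx\frac{4R^2}{\delta}d^R$ paths and all $R$ that you dismiss, using Proposition~\ref{prop:R2dR}/Markov to control $|\bd T_R|$. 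Your side remark that this union bound ``fails'' is wrong: because the per-vertex bad probability is $\le \e^{-\eps^2 d/4}$, the per-path, per-residue failure probability can be driven down to roughly $\e^{-\Theta(\eps^3 dR/(h\log d))}$ (the paper's choice $\alpha=\eps\,\e^{\eps^2 d/4}/(4\log d)$ is exponential in $d$), which beats $d^R$ once $d$ is large in terms of $\gamma$; it is not of order $\e^{-\Theta(\gamma^2 R/h)}$. So the obstacle you identify is not the paper's obstacle, though your workaround is sound. Comparing the two: your route is arguably cleaner in that the leaf count cancels automatically and no separate bound on $|\bd T_R|$ is needed, but it leans on the many-to-one lemma, which you only gesture at --- note that the correct statement is a change of measure ($\Exp[I_R]=d^R\,\hat\Exp[\prod_i(1-a_{v_i})]$ with spine offspring law $1+\Poi(d)$), not literally ``conditioning on a surviving path,'' and one must check, as you partly do, that the size-biased offspring law and the off-spine $\Poi(d)$ subtrees still make the spine bad-vertex probability exponentially small in $d$ and independent within a residue class. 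The paper's route avoids any size-biasing machinery and yields the stronger structural fact that, with probability $1-\delta$, \emph{every} root-to-depth-$R$ path has at most an $\eps/(4\log d)$ fraction of bad vertices, at the modest cost of the explicit union bound and the boundary-size estimate. Your numerical bookkeeping (contraction factor $c_0\le 1-\gamma/4$, good-vertex bound $1-a_{v_i}\le(1-11\gamma^2/80)/d$, final exponent comfortably below $\gamma^2/56$) checks out at the level of the sketch.
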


Fix $\gamma \in (0,1)$. Let $\lambda = \frac{(1-\gamma) \e}{d}$, and, as before, let $\eps = \gamma^2/4$. 
Denote $h = 1+ \left\lceil \frac{2\log \gamma -4}{\log(1-\gamma/2)} \right\rceil$.

We'll call a vertex $u$ in the $\Poi(d)$ tree 
\emph{good} if its subtree to depth $2h$ is $[(1-\eps)d, (1+\eps)d]$-regular. 
Note that for a Poisson random variable $X$ with mean $d$,  
and $0<\eps \le 1$, the following Chernoff bound holds:
\[
\Prob\left(|X-d|> \eps d\right) \le 2 \e^{-\eps^2 d/3}
\] 
(This follows, e.g., from \cite[Theorem 5.4 and inequalities (4.2), (4.5)]{mitz-upf}.)
Applying this to the vertex degrees in the subtree of depth $2h$
rooted at $u$, and taking a union bound, we find
\[
\Prob\left(\mbox{$u$ is good}\right) \ge 1 - 2 \left((1+\eps)d\right)^{2h+1} 
\e^{-\eps^2d/3}  
\ge 1 - \e^{-\eps^2 d/4}
\]
for all sufficiently large $d$.

\begin{lemma}\label{lem:good}
If $u$ is a good vertex then, subject to any boundary condition at
least $2h$ levels below $u$, we have  $1-a_u \le  \frac{1}{e^{\eps/3}d}$
\end{lemma}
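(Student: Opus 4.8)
The plan is to reduce, by monotonicity of the recurrence~\eqref{eq:bp} followed by the pruning/grafting of Lemma~\ref{lem:prune-graft}, to iterating the alternating regular-tree map $f_{(1-\eps)d}\circ f_{(1+\eps)d}$ started from the worst boundary value, and then to apply the contraction estimate of Lemma~\ref{lem:d1d2contraction} together with the fixed-point bound of Lemma~\ref{lem:fp-lowerbd}.

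First I would observe that, since the right-hand side of~\eqref{eq:bp} is decreasing in each child's non-occupation probability and the path from $u$ to any vertex $2h$ levels below it has even length, $a_u$ is a \emph{monotonically increasing} function of each of the non-occupation probabilities that the boundary condition induces at the vertices $2h$ levels below $u$. Since all of these induced values lie in $[0,1]$, we get $a_u \ge \tilde a_u$, where $\tilde a_u$ is the non-occupation probability at $u$ in the depth-$2h$ subtree of $u$ with every vertex $2h$ levels below $u$ set to $0$. As $u$ is good, this subtree is $[(1-\eps)d,(1+\eps)d]$-regular, so Lemma~\ref{lem:prune-graft}(1)---pruning at even levels down to $\lceil(1-\eps)d\rceil$ children and grafting at odd levels up to $\lfloor(1+\eps)d\rfloor$ children, with the $O(1/d)$ integer roundings absorbed by a harmless shrinking of $\eps$---yields
\[
\tilde a_u \;\ge\; \bigl(f_{(1-\eps)d}\circ f_{(1+\eps)d}\bigr)^{h}(0)\,.
\]

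Next, the proof of Lemma~\ref{lem:fp-lowerbd} already checks that $\lambda \le \lambda\bigl((1-\eps)d,(1+\eps)d\bigr)$, so Lemma~\ref{lem:d1d2contraction} applies: the pair map has a unique fixed point $a^*$, approached geometrically with ratio $c \le \tfrac12\bigl(1+\lambda/\lambda((1-\eps)d,(1+\eps)d)\bigr)$, so that $\bigl(f_{(1-\eps)d}\circ f_{(1+\eps)d}\bigr)^{h}(0) \ge a^* - c^{\,h-1}\ln(\lambda+1)$. Using $\lambda=(1-\gamma)\e/d$, the bound $\lambda((1-\eps)d,(1+\eps)d)\ge \e/((1+\eps)d)$ from~\eqref{eq:lambdad1d2simple}, $\eps=\gamma^2/4$, the estimate $a^*\ge 1-\tfrac{1}{(1+\eps)d}$ from Lemma~\ref{lem:fp-lowerbd}, and $\ln(\lambda+1)\le\lambda\le\e/d$, one arrives at
\[
1-a_u \;\le\; \frac{1}{(1+\eps)d} \;+\; c^{\,h-1}\,\frac{\e}{d}\,,\qquad c\le 1-\tfrac{\gamma}{2}+O(\gamma^2)\,.
\]

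It remains to verify that the right-hand side is at most $\tfrac{1}{\e^{\eps/3}d}$. Since $\e^{\eps/3}<1+\eps$, the gap $\tfrac{1}{\e^{\eps/3}d}-\tfrac{1}{(1+\eps)d}$ is of order $\eps/d=\gamma^2/d$, and the value $h=1+\lceil(2\log\gamma-4)/\log(1-\gamma/2)\rceil$ is chosen exactly so that $c^{\,h-1}$ is of order $\gamma^2$ (essentially $(1-\gamma/2)^{h-1}\le \gamma^2\e^{-4}$), so the second term fits inside that gap once $d$ is large. The one genuinely delicate point, and the main obstacle, is that the contraction ratio $c$ is not literally $1-\gamma/2$ but only $1-\gamma/2+O(\gamma^2)$; one must check that taking $\eps=\gamma^2/4$ keeps $c$ close enough to $1-\gamma/2$ that $c^{\,h-1}(\e/d)$ still beats the $\Theta(\gamma^2/d)$ slack. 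Everything else is a routine chase of elementary inequalities in $\gamma$ and $\eps$ (such as $\e^{\eps/3}<1+\eps$, $\ln(1+x)\le x$, and $(1+1/n)^{n+1}\ge\e$), valid once $d$ exceeds a threshold $d_0(\gamma)$ large enough that, e.g., $(1-\eps)d\ge 2$.
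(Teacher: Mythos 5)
Your argument follows the paper's proof of Lemma~\ref{lem:good} step for step: minimize $a_u$ by setting the level exactly $2h$ below $u$ to zero (legitimate since that level is at even distance), compare with the $((1-\eps)d,(1+\eps)d)$-alternating tree via Lemma~\ref{lem:prune-graft}(1), then combine the contraction of Lemma~\ref{lem:d1d2contraction} with the fixed-point bound of Lemma~\ref{lem:fp-lowerbd} and the choice of $h$. The one place you diverge is the bound on the contraction ratio: you invoke the packaged estimate $c \le \tfrac12\bigl(1+\lambda/\lambda((1-\eps)d,(1+\eps)d)\bigr)$, which only gives $c \le 1-\gamma/2 + \gamma^2(1-\gamma)/8$, and you then (correctly) flag, but do not carry out, the resulting arithmetic check that $c^{h-1}\e$ still fits under the gap $\e^{-\eps/3}-\tfrac{1}{1+\eps} = \Theta(\gamma^2)$. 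The paper avoids this delicacy entirely by going back to the exact relation \eqref{eq:lamc} inside Lemma~\ref{lem:d1d2contraction}: since $\lambda=\lambda_c \ge c\,\e/((1+\eps)d)$, one gets $c \le (1+\eps)d\lambda/\e = (1-\gamma)(1+\eps) \le 1-\gamma/2$ outright, whence $c^{h-1}\e \le \gamma^2\e^{-3} < \eps/2$ by the choice of $h$, and $1-a_u \le \frac{1+\eps/2}{(1+\eps)d} \le \e^{-\eps/3}/d$ follows immediately. Your weaker bound on $c$ does appear to survive the final comparison (the slack is roughly a factor of $2$ uniformly in $\gamma\in(0,1)$), so this is a soft spot rather than a fatal error; but as written the key inequality is asserted rather than proved, and the cleanest repair is simply to extract $c \le (1-\gamma)(1+\eps)$ from \eqref{eq:lamc} as the paper does rather than to verify the $O(\gamma^2)$-perturbed version.
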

\begin{proof}
Since the tree of depth $2h$ rooted at $u$ has even depth, $a_u$ is minimized 
when all its descendents at depth $2h$ below it are set to 0. Let $a_u^0$ be 
this minimum value, and let $a_u'$ be the non-occupation probability at $u$ of 
the $( (1-\eps)d, (1+\eps)d)$ alternating tree of height $2h$ rooted at $u$, 
when all its leaves are set to 0.

By pruning and grafting (Lemma~\ref{lem:prune-graft}), we know that $a_u^0 \ge a_u'$.

By Lemma~\ref{lem:fp-lowerbd}, the fixed point $a_*$ of  
$f_{(1-\eps)d}\circ f_{(1+\eps)d}$ is at least
$1- \frac{1}{(1+\eps)d}$.

Let $c$ be the constant from Lemma~\ref{lem:d1d2contraction} for 
$f_{(1-\eps)d}\circ f_{(1+\eps)d}$. Since $c >0$, by \eqref{eq:lamc}
\begin{align*}
\lambda &= c  ((1+\eps)d)^{(1-\eps)d} \left( \frac{(1-\eps)d+1}{(1-\eps)d(1+\eps)d  - c} \right)^{(1-\eps)d+1} \\
&\ge c  \frac{1}{(1+\eps)d} \left( \frac{(1-\eps)d+1}{(1-\eps)d} \right)^{(1-\eps)d+1}\\ & \ge \frac{c\;\e}{(1+\eps)d}
\end{align*}
whence it follows that   
\[
c \le \frac{(1+\eps)d\lambda}{\e} = (1-\gamma)(1+\eps) \le 1-\gamma/2  ,
\]
since by definition, $\eps = \gamma^2/4$.  By our choice of $h$, 
it follows that $c^{h-1} \e < \gamma^2/8 = \eps/2$.

Since $a_u' = f_{(1-\eps)d}\circ f_{(1+\eps)d}(0)$, by 
Lemma~\ref{lem:d1d2contraction} it follows that 
\begin{align*}
\left| a_u' - a_* \right| &\le c^{h-1} \ln (1+\lambda) \\
&\le c^{h-1} \lambda\\
&= \frac{c^{h-1}\e (1-\gamma)}{d}\\
&\le \frac{\eps/2}{(1+\eps)d} \,.
\end{align*}
Rearranging terms, we see that 
\[
a_u \ge a_u^0 \ge  a_u' \ge a_* - \frac{\eps/2}{(1+\eps)d} =
1- \frac{1+\frac12\eps}{(1+\eps)d} \,. 
\]
Finally,
\[
1-a_u \le  \frac{1+\frac12\eps}{(1+\eps)d} = 
\frac{1}{d}\left(1-\frac{\eps}{2(1+\eps)}\right) \le \frac{\e^{-\eps/3}}{d}
\]
whence the lemma follows.
\end{proof}

Consider any path $P$ from the root to a leaf at depth $R$ in the truncated 
$\Poi(d)$ tree. Fix $j \in \{0, 1, \dots, 2h-1\}$. Let $P_j = \{ 
u \in P | \depth(u) \equiv j \pmod{2h}\}$. For $u \in P_j$,  the events that 
$u$ is bad are independent. 

Let $X_{P,j}$ denote the number of bad $u$ in $P_j$. Then 
$\Exp X_{P,j} \le (R/(2h))\e^{-\eps^2 d/4}$, and by Chernoff's bound, 
for any $\alpha > 1$, 
\[
\Pr \left( X_{P,j} \ge \alpha \frac{R}{2h} \e^{-\eps^2 d/4} \right) \le \left(
  \frac{\e}{\alpha}\right)^{\alpha (R/(2h)) e^{-\eps^2 d/ 4}}
\]
Choosing $\alpha = \eps \e^{\eps^2 d / 4} / 4 \log (d)$, which is
exponential in $d$, we see that the right hand side becomes
\[
\left(\frac{\e}{\alpha}\right) ^ {\eps R/ 8 h \log(d)}
\]
In particular, for sufficiently large $d$,
this is less than
\[
\e ^ {-\eps^3 d R/ 40 h \log(d)} 
\]
This is so tiny that, even if we take a union bound over all $R$, 
all $j \le 2h$, and the ``first'' $\frac{4R^2}{\delta} d^R$ paths of length $R$ from
the root, the resulting probability bound still can be made smaller
than $\delta/2$.

Applying Markov's inequality to the expected number of nodes at depth
$R$,
we get that, with probability $\ge 1 - \delta/2$, there are at most
$\frac{4R^2}{\delta}d^R$ of these, for $R \ge R_0(\delta)$.  Thus, our union 
bound actually covered all the vertices at depth $R$.

Let $X_P= \sum_j X_{P,j}$ denote the total number of bad nodes on the path $P$.
Assuming the above ``good'' event, we have for all $R \ge R_0$, and
all paths $P$ of length $R$, that $X_P \le \alpha R \exp(-\eps^2d /4)$.

Let $w$ denote the leaf at depth $R$ on $P$ and $v$ denote the root. 
Recall that
\[
\left|\frac{\partial a_v}{\partial a_w}\right| \le (1+\lambda) 
\prod_{u \in P} (1- a_u)
\]
By Lemma~\ref{lem:good} we have 
\begin{align*}
\left|\frac{\partial a_v}{\partial a_w}\right| 
&\le 
\left(\frac{1}{\e^{\eps/3}d}\right)^{R-X_P}
\le \left(\frac{1}{\e^{\eps/3}d}\right)^{R (1 - \alpha \exp(-\eps^2
  d/4))}  \\
&= \left(\frac{1}{\e^{\eps/3}d}\right)^{R (1 - \eps/4 \log(d))}
\le d^{-R} \exp(-\eps R/3  + \eps R/4 + \eps^2 R / 12 \log(d))  \\
&\le d^{-R} \exp(-\eps R/13).
\end{align*}
By \eqref{eq:av0av1bd}, it follows that
\[
| a_v^{\mathbf{0}} - a_v^{\mathbf{1}} | \le | \bd T_R | d^{-R}
\exp(-\eps R/13) \le \exp(-\eps R/14),
\]
as desired, again assuming our good event, and noting that 
this implied $| \bd T_R | \le d^R \mathrm{poly}(R)$. This completes 
the proof of Theorem~\ref{thm:asymplb}.

Theorem~\ref{thm:asymplb} says that for any $\gamma \in (0,1)$, for 
sufficiently large $d$ the $\Poi(d)$ tree with activity $\frac{(1-\gamma)\e}{d}$
exhibits weak spatial mixing at the root, with probability 1. In other words, with probability 1, there is a well-defined value $a_v$, where $v$ is the root.
Moreover, since each node $w$ is the root of its own $\Poi(d)$ subtree, whose structure determines $a_w$, and there are only countably many nodes, it follows that, with probability 1, every node $w$ has a well-defined value $a_w$.

Since $a_w$ is the probability that $w$ is unoccupied, conditioned on its parent $p(w)$ being unoccupied, it follows that the occupation probabilities satisfy the recurrence
\[
\Pr( w \in X ) = (1-a_w) (1 - \Pr( p(w) \in X)),
\]
and hence, by induction on $\mathrm{depth}(w)$, these probabilities are well-defined, i.e. the $\Poi(d)$ tree exhibits weak spatial mixing at all vertices, with probability 1.  This completes the proof of Theorem~\ref{thm:asymplb-main}.

\section{Conclusion}

\paragraph*{Acknowledgments.}  
This work was supported in part by the National Science Foundation under Grant No. PHYS-1066293 and the hospitality of the Aspen Center for Physics. 
This work was also partially supported by NSF grants CCF-1150281 and CCF-1219117.

\end{document}